\documentclass{article}
\usepackage{amsmath}
\usepackage{amssymb}
\usepackage{enumerate}
\usepackage{amsthm}
\usepackage{todonotes}
\usepackage{mathtools} 
\usepackage[UKenglish]{babel}
\usepackage[T1]{fontenc}
\usepackage[utf8]{inputenc}
\usepackage{lmodern} 
\usepackage{tikz}
\usepackage{thmtools} 
\usepackage{thm-restate} 
\usepackage{stackrel} 
\usepackage{hyperref}
\usepackage[left = 23mm,right=23mm, top = 23mm,bottom =23mm, paper = a4paper]{geometry}
\usetikzlibrary {arrows.meta}
\usetikzlibrary{decorations.pathreplacing}
\usepgflibrary {shadings}

\declaretheorem[name=Theorem,numberwithin=section]{thm} 
\newtheorem*{thm*}{Theorem}

\newtheorem*{define*}{Definition}
\newtheorem{define}[thm]{Definition}

\newtheorem*{lemma*}{Lemma}
\newtheorem{lemma}[define]{Lemma}

\newtheorem*{algorithm*}{Algorithm}

\newtheorem*{construction*}{Construction}

\newtheorem*{prop*}{Proposition}
\newtheorem{prop}[define]{Proposition}

\newtheorem*{obs*}{Observation}

\newtheorem*{fact*}{Fact}

\newtheorem*{remark*}{Remark}
\newtheorem{remark}[define]{Remark}

\newtheorem*{claim*}{Claim}
\newtheorem{claim}[define]{Claim}

\newtheorem*{quest*}{Question}

\newtheorem*{cor*}{Corollary}

\newtheorem*{conjecture*}{Conjecture}

\newtheorem*{question*}{Question}
\newtheorem{question}[define]{Question}

\newtheorem*{example*}{Example}

\usepackage[mathlines]{lineno}
\usepackage[normalem]{ulem}
\usepackage{graphicx}

\usepackage{color}
\definecolor{grey}{rgb}{.7,.7,.7}
\definecolor{blue}{rgb}{0,0,.8}
\definecolor{red}{rgb}{.8,0,0}
\definecolor{green}{rgb}{0,.4,0}
\definecolor{gold}{rgb}{0.8,0.6,0.1}
\definecolor{brown}{rgb}{0.8,0.4,0.1}
\definecolor{arxivcolor}{rgb}{0.5,0.5,0}
\definecolor{journalcolor}{rgb}{0.5,0,1}
\definecolor{purple}{rgb}{0.6,0.2,0.6}
\definecolor{pastelgreen}{rgb}{0,0.65,0.1}




\newcommand{\R}{\mathbb{R}}

\newcommand{\Z}{\mathbb{Z}}
\newcommand{\N}{\mathbb{N}}

\newcommand{\leb}{\mc{L}}
\newcommand{\sep}{\operatorname{sep}}

\newcommand{\floor}[1]{\lfloor#1\rfloor}


\DeclareMathOperator{\diam}{diam}

\DeclareMathOperator{\dist}{dist}
\newcommand{\abs}[1]{\left|#1\right|}

\newcommand{\norm}[1]{\left\|#1\right\|}

\newcommand{\mc}[1]{\mathcal{#1}}

\newcommand{\set}[1]{\left\{#1\right\}}
\newcommand{\br}[1]{\left(#1\right)}
\newcommand{\sqbr}[1]{\left[#1\right]}

\newcommand{\upp}[1]{^{(#1)}}



\title{Fast repetitivity in non-rectifiable Delone sets}
\author{Ashwin Bhat 
   and Michael Dymond}  
   \date{}
   
\begin{document}
\maketitle

\begin{abstract}
    We present a construction of non-rectifiable, repetitive Delone sets in every Euclidean space $\R^{d}$ with $d\geq 2$. We further obtain a close to optimal repetitivity function for such sets. The proof is based on the process of `encoding' a non-realisable density in a Delone set, due to Burago and Kleiner.
\end{abstract}
\section{Introduction}
Shechtman's discovery that quasicrystals exist~\cite{schechtman1} won him the Nobel Prize for Chemistry in $2011.$ Over the four decades surrounding this remarkable event, studying such structures became of huge interest to mathematicians through the language of Delone sets (\cite{adiceam2016open}, \cite{baake2002guide}). Quasicrystals are materials whose x-ray diffraction spectra contain defined spots which demonstrate long-range order, but unlike the classical theory of crystalline chemistry, its patterns are aperiodically ordered: typically, the symmetries they have are impossible for fully periodic atomic structures to determine \cite{lagarias2003repetitive}. Analogously, Delone sets that model the atomic structure of quasicrystals have the formal property of being \emph{repetitive} (see Definition \ref{def: rep_function}). This determines a \emph{repetitivity function} which roughly measures how far one must look from any point in space to find every pattern of a given size $r$ in the Delone set. This function $R(r)$ describes, in the sense of functional growth rate, how `fast' the repetitivity of a Delone set is. 

Repetitive Delone sets are further a significant object of interest in dynamical systems, since they give rise to minimal topological dynamical systems, see e.g.~\cite[Theorem~3.11]{frettloh} or \cite[Theorem~6.5]{SS_substition}. Moreover, the repetitivity function, as well as the related patch counting function, of repetitive Delone sets have been well-studied and shown to relate to important concepts from mathematical physics and dynamical systems. For example, \emph{ideal crystals} (Delone sets with a full rank of translation symmetries) are characterised by boundedness of the repetitivity function~\cite{lagarias2003repetitive}. Linearly repetitive Delone sets (with $R(r)\in O(r)$) are also well-distinguished in the literature, such as \cite{lagarias2003repetitive}, because they typically arise in constructions of quasicrystals and they include all Delone sets corresponding to self-similar tilings~\cite{priebe2001characterization}.

Delone sets have also been studied from the point of view of discrete metric spaces, where one of the key objectives is, broadly speaking, to compare different discrete metric structures. Two metric spaces $M$ and $N$ are said to be \emph{bi-Lipschitz equivalent} if there exists a bi-Lipschitz bijection $f\colon M\to N$. Since Lipschitz mappings are those which do not increase distances by more than some finite factor, bi-Lipschitz equivalent spaces may be thought of as distorted copies of each other with at most a constant distortion factor. A major line of research on Delone sets began with the simple but highly non-trivial question of whether every pair of Delone sets in $\R^{d}$ are bi-Lipschitz equivalent, or put differently, whether every Delone set in $\R^{d}$ is \emph{rectifiable}, or bi-Lipschitz equivalent to the integer lattice $\Z^{d}$. This question was posed by Furstenberg in the context of dynamical systems (see \cite{burago2002rectifying} for a detailed discussion), and later by Gromov \cite{gromov1992asymptotic} from the standpoint of metric geometry. 

The question was finally resolved in the negative in 1998 independently by Burago and Kleiner~\cite{burago1998separated} and McMullen~\cite{mcmullen1998lipschitz}: in every Euclidean space with dimension $d \geq 2$ there are non-rectifiable Delone sets. A remarkable aspect of both the proofs of \cite{burago1998separated} and \cite{mcmullen1998lipschitz} is that the problem is transformed from `discrete' to `continuous'.
More precisely, the existence of non-rectifiable Delone sets is shown via the construction of so-called (bi-Lipschitz) \emph{non-realisable densities}, that is, measurable functions $\rho:[0,1]^d \to \R_{>0}$ (or $\rho\colon \R^{d}\to \R_{>0}$ in the case of \cite{mcmullen1998lipschitz}) having no bi-Lipschitz solution $f:[0,1]^d \to \R^d$ (or $f\colon\R^{d}\to\R^{d})$ to the `prescribed Jacobian equation'
\begin{equation} \label{eq: prescribed volume form}
    \rho = \text{Jac}(f) \qquad \text{almost everywhere}.
\end{equation}
The solvability of the prescribed Jacobian equation (and its generalisations) was already, and continues to be, a topic of wide independent interest; see for example \cite{dacorogna2007direct}, \cite{riviere1996resolutions}, \cite{dymond2018mapping} and \cite{dymond2023highly}. The important discovery of bi-Lipschitz non-realisable densities, suitable for establishing the existence of non-rectifiable Delone sets is due to Burago and Kleiner~\cite{burago1998separated} and McMullen~\cite{mcmullen1998lipschitz}.

Having obtained a non-realisable density $\rho\colon [0,1]^{d}\to \R_{>0}$, Burago and Kleiner~\cite{burago1998separated} construct a Delone set $X$ by placing `better and better' discrete approximations of $\rho$ alongside each other. In the present work, we give this approximation property a name--we say that $X$ \emph{encodes} the density $\rho$:
\begin{restatable}{define}{encodingdefinition} \label{defn: X encoding rho}[inspired by \cite{burago1998separated}]
    We say that a Delone set $X \subset \R^d$ \emph{encodes} a measurable density $\rho:[0,1]^d\to \R_{>0}$ if there exists a sequence $r_n\to \infty,$ a sequence of closed cubes $Q_n \subset \R^d$ with sidelength $r_n$ and a sequence of bijective affine maps $\varphi_n:[0,1]^d \to Q_n$ such that if 
    \begin{equation*} \label{eqn: X encoding rho}
        \mu_n(A) \coloneqq  \frac{|X \cap \varphi_n(A)|}{r_n^d} \qquad \emph{for each} \;A \subseteq [0,1]^d,
    \end{equation*} then $\mu_n \rightharpoonup \rho \mathcal{L}$ on $[0,1]^d$, where $\leb$ denotes the Lebesgue measure and $\rho\leb$ stands for the measure given by the formula $\rho\leb(A)=\int_{A}\rho\,d\leb$.
\end{restatable}
The notion of a Delone set encoding a density $\rho$ is important due to the following result of Burago and Kleiner:
\begin{restatable}{thm}{realisable} \label{prop:non-rectifiable non-realisable}[Burago and Kleiner~\cite[Lemma~2.1]{burago1998separated}, see also \cite[Lemma~3.4]{dymond2023highly}]
        \newline Let $X \subset \R^d$ be a Delone set and $\rho:[0,1]^d \to \R_{> 0}$ be a measurable function which is non-realisable. If $X$ encodes $\rho,$ then $X$ is non-rectifiable. 
    \end{restatable}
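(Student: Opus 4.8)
The plan is to argue by contradiction: assuming $X$ is rectifiable, I will manufacture a bi-Lipschitz solution of $\rho = \operatorname{Jac}(f)$, contradicting non-realisability. So suppose there is a bi-Lipschitz bijection $g\colon X\to\mathbb{Z}^d$ with distortion constant $L$. The first step is to extend $g$ to a bi-Lipschitz homeomorphism $G\colon\mathbb{R}^d\to\mathbb{R}^d$ with $G|_X = g$ and distortion constant $L' = L'(L,d)$; this is a standard fact about bi-Lipschitz bijections of Delone sets (extend affinely over a Delaunay-type triangulation, say). The key bookkeeping point is that, since $G$ restricts to a bijection $X\to\mathbb{Z}^d$, for any region $U\subseteq\mathbb{R}^d$ one has $G(U)\cap\mathbb{Z}^d = G(U\cap X)$, hence $|G(U)\cap\mathbb{Z}^d| = |U\cap X|$.

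Using the data witnessing that $X$ encodes $\rho$, namely scales $r_n\to\infty$, cubes $Q_n$ of sidelength $r_n$ and affine bijections $\varphi_n\colon[0,1]^d\to Q_n$, I define rescaled maps $f_n\colon[0,1]^d\to\mathbb{R}^d$ by $f_n(x) = r_n^{-1}\bigl(G(\varphi_n(x)) - v_n\bigr)$, where $v_n = G(\varphi_n(c))$ and $c$ is the centre of $[0,1]^d$. As $\varphi_n$ scales all distances by exactly $r_n$, each $f_n$ is $L'$-bi-Lipschitz, and $f_n(c) = 0$ makes the family uniformly bounded. By Arzel\`a--Ascoli, a subsequence $f_{n_k}$ converges uniformly to a map $f\colon[0,1]^d\to\mathbb{R}^d$, which is again $L'$-bi-Lipschitz (the lower Lipschitz bound survives uniform limits); in particular $f$ is injective and differentiable almost everywhere.

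It remains to identify $\operatorname{Jac}(f)$ with $\rho$. For a subcube $A\subseteq[0,1]^d$, a change of variables gives $\mathcal{L}(f_n(A)) = r_n^{-d}\mathcal{L}(G(\varphi_n(A)))$, while $\mu_n(A) = r_n^{-d}|X\cap\varphi_n(A)| = r_n^{-d}|G(\varphi_n(A))\cap\mathbb{Z}^d|$ by the observation above. The discrepancy between the volume of a set and its lattice-point count is controlled by the number of unit cubes meeting its boundary, and $\partial G(\varphi_n(A)) = G(\varphi_n(\partial A))$ is an $L'$-Lipschitz image of a piecewise-flat $(d-1)$-dimensional set of $\mathcal{H}^{d-1}$-measure $\asymp r_n^{d-1}$, hence meets $O(r_n^{d-1})$ unit cubes; thus $|\mu_n(A) - \mathcal{L}(f_n(A))| = O(1/r_n)\to 0$. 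Since $A$ is a continuity set for $\rho\mathcal{L}$, the encoding hypothesis gives $\mu_{n_k}(A)\to\int_A\rho\,d\mathcal{L}$, and uniform convergence of the uniformly bi-Lipschitz maps $f_{n_k}$ forces $\mathcal{L}(f_{n_k}(A))\to\mathcal{L}(f(A))$ (the boundary $f(\partial A)$ is $\mathcal{H}^{d-1}$-finite, hence Lebesgue-null, so no volume escapes in the limit). Therefore $\mathcal{L}(f(A)) = \int_A\rho\,d\mathcal{L}$ for every subcube $A$.

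By the area formula applied to the injective bi-Lipschitz map $f$, $\mathcal{L}(f(A)) = \int_A |\operatorname{Jac}(f)|\,d\mathcal{L}$, so $|\operatorname{Jac}(f)| = \rho$ almost everywhere by the Lebesgue differentiation theorem; post-composing $f$ with a reflection if necessary to fix the sign, $f$ solves $\rho = \operatorname{Jac}(f)$ almost everywhere, contradicting non-realisability of $\rho$. I expect the main obstacle to be the bi-Lipschitz extension step together with the boundary estimate comparing lattice counts to volumes; both are routine but genuinely use the Lipschitz regularity of $G$ to bound the $(d-1)$-dimensional size of images of cube boundaries.
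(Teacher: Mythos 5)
Your overall strategy — rescale, extract a limit by Arzel\`a--Ascoli, match $\mathcal{L}(f(A))$ to $\int_A\rho\,d\mathcal{L}$, then invoke the area formula — is sound and is essentially the same skeleton the paper uses. Where you diverge is in the comparison step: the paper pushes forward the empirical measures $\mu_n$ under the rescaled maps and cites two external lemmas to identify the two weak limits as $g\#(\rho\mathcal{L})$ and $\mathcal{L}|_{g([0,1]^d)}$, whereas you compare $\mu_n(A)$ with $\mathcal{L}(f_n(A))$ directly via a boundary/lattice-count discrepancy of order $r_n^{d-1}$. That is a clean, attractively elementary alternative and it correctly isolates why the boundary terms vanish.

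The serious gap is your very first step: the assertion that a bi-Lipschitz bijection $g\colon X\to\mathbb{Z}^d$ extends to a bi-Lipschitz homeomorphism $G\colon\mathbb{R}^d\to\mathbb{R}^d$ is not a standard fact, and the construction you gesture at (``extend affinely over a Delaunay-type triangulation'') does not produce one. A piecewise-affine extension over a triangulation with vertex set $X$ is Lipschitz, but there is nothing forcing it to be \emph{injective}: adjacent simplices can easily be mapped to overlapping images, and the inverse-Lipschitz bound on the vertex set does not propagate to the interiors of the simplices. In fact, whether every bi-Lipschitz bijection between Delone sets in $\mathbb{R}^d$ admits a bi-Lipschitz homeomorphic extension of $\mathbb{R}^d$ (with a controlled constant) is a delicate question and not something one can wave away. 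This matters because every subsequent step of your argument leans on $G$ being a global bi-Lipschitz homeomorphism: the identity $|G(U)\cap\mathbb{Z}^d|=|U\cap X|$ uses bijectivity of $G$ on all of $\mathbb{R}^d$, the identity $\partial G(\varphi_n(A))=G(\varphi_n(\partial A))$ uses that $G$ is an open map, and the volume formula $\mathcal{L}(f_n(A))=r_n^{-d}\mathcal{L}(G(\varphi_n(A)))$ uses injectivity to prevent collapse. The paper avoids this entirely: it applies Kirszbraun to get only \emph{Lipschitz} extensions $\overline{g_n}$ of the restricted, rescaled maps, and then recovers the two-sided bound only for the \emph{limit} map $g$, using that each $\overline{g_n}$ is bi-Lipschitz on the increasingly fine net $\varphi_n^{-1}(X)$. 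If you want to keep your more hands-on measure comparison rather than citing the pushforward lemmas, you would need to rework it so that it only ever uses one-sided Lipschitz extensions at finite $n$, and defer all injectivity claims to the limit.
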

    We do not know whether the converse to Theorem~\ref{prop:non-rectifiable non-realisable} is true: 
    \begin{question} \label{question}
        If a Delone set $X\subset \R^{d}$ is non-rectifiable, does $X$ encode a non-realisable density $\rho$? 
    \end{question}
Motivated by the major research interest in repetitive Delone sets and by Theorem~\ref{prop:non-rectifiable non-realisable}, we will study repetitivity in Delone sets encoding a given density $\rho$. Indeed, the interaction of rectifiability and repetitivity has already attracted significant investigation - see \cite{Smilansky_2022}, \cite{inoquio2024rectifiability}, \cite{cortez2014some} and \cite{aliste2013linearly}. In order for a Delone set $X$ to encode a given density $\rho$, $X$ must contain patterns of points of a certain form, corresponding to the measures $\mu_n$ in Definition~\ref{defn: X encoding rho}. On the other hand, repetitivity places a strong restriction on the types of patterns that can occur within $X$. This leads to the question of whether the two properties can occur simultaneously in a Delone set $X$. Cortez and Navas~\cite{cortez2014some} answer this question negatively, constructing repetitive, non-rectifiable Delone sets in $\R^2$ (generalising to all $\R^{d}$ with $d\geq 2$). Cortez and Navas~\cite[Remark~14]{cortez2014some} further raise the hard question of determining the class of repetitivity functions admitting non-rectifiable, repetitive Delone sets. The construction of \cite{cortez2014some} does not provide bounds on the repetitivity function, except along a subsequence. Otherwise, there have been important advances on this problem from the opposite direction: Aliste-Prieto, Coronel and Gambaudo \cite{aliste2013linearly} prove, using a criterion of Burago and Kleiner \cite[Theorem $1.3$]{burago2002rectifying}, that every linearly repetitive Delone set is rectifiable. A very recent work~\cite[Theorem B]{inoquio2024rectifiability} improves on this result: Inoquio-Renteria and Viera show that whenever $q \in [0,\frac{1}{d})$ that every $r(\log{r})^q-$repetitive Delone set is rectifiable. A different type of improvement is made by Smilansky and Solomon~\cite{Smilansky_2022}, who show that linear repetitivity may be weakened to `$\varepsilon$-linear repetitivity' in a sufficient condition for rectifiability.

In the present article we provide a short new construction of repetitive, non-rectifiable Delone sets, encoding a given density $\rho$, together with an explicit asymptotic bound on the repetitivity function.
\begin{restatable}{thm}{mainlemma}
        \label{delone}
   Let $d\in \N_{\geq 2}$ and $\rho:[0,1]^d \to [\frac{4}{3},\frac{5}{3}]$ be a measurable function. 
        Then there exists a repetitive Delone set in $\R^d$ which encodes $\rho$ and has repetitivity function
        \begin{equation*}
            R(r)\in\bigcap_{q>\frac{2}{d}}o\left(r \left(\frac{\log r}{\log \log r}\right)^q\right).
        \end{equation*}
        \end{restatable}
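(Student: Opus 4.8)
The plan is to build the Delone set $X$ as a nested sequence of patterns, where at stage $n$ we tile $\R^d$ by a single "super-pattern" $P_n$ of sidelength $\ell_n$ (with $\ell_n\to\infty$ rapidly), and $P_{n+1}$ is assembled from translated copies of $P_n$ glued along a thin interface. Repetitivity is then automatic with $R(r)\lesssim \ell_{n+1}$ whenever $r\le \ell_n$, so the whole game is to make the $\ell_n$ grow slowly while still having room at each stage to (i) correct the discrepancy between the current point count and $\rho$, and (ii) pad out the interface region so that adjacent copies of $P_n$ fit together into a genuine Delone set. Concretely, I would fix a reference density, say the constant $\frac32$, take the bulk of $P_n$ to be a near-lattice arrangement realising $\frac32$, and reserve a sublattice of "correction cells" of controlled sidelength in which the local point count is nudged up or down to push the empirical measure $\mu_n$ towards $\rho$. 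Since $\rho$ takes values in $[\frac43,\frac53]$, a relative correction of order $\frac16$ suffices, and by choosing the correction cells at scale comparable to a slowly growing function of $\ell_n$ one gets $\mu_n\rightharpoonup\rho\mathcal L$ by a standard martingale/refinement argument: this is exactly the Burago–Kleiner "encoding" mechanism, and I would cite Definition~\ref{defn: X encoding rho} and Theorem~\ref{prop:non-rectifiable non-realisable} to conclude non-rectifiability once $\rho$ is taken non-realisable (which is possible by the results quoted in the introduction; note the hypothesis only restricts the range of $\rho$, and non-realisable densities with range in $[\frac43,\frac53]$ exist after rescaling).

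The heart of the matter is the quantitative bookkeeping that yields the repetitivity bound $R(r)\in\bigcap_{q>2/d}o\!\left(r(\log r/\log\log r)^q\right)$. I would set up a recursion of the form $\ell_{n+1}=\ell_n\cdot k_n$ where $k_n$ is the number of copies of $P_n$ along each axis in $P_{n+1}$, so $\ell_n=\prod_{i<n}k_i$. The interface/correction overhead at stage $n$ costs a relative error of roughly $1/k_n$ (the gluing layer has thickness $O(1)$ relative to a block of size $k_n$), and to get $\mu_n\to\rho$ weakly we need $\sum 1/k_n<\infty$, or at least $1/k_n\to 0$ with the accumulated error at stage $n$ summing to something $o(1)$; taking $k_n$ growing like $n^{1+\varepsilon}$ or faster is more than enough. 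Then $\log\ell_n\asymp\sum_{i<n}\log k_i$, and one checks that to achieve $R(r)\approx \ell_{n+1}$ at scale $r\approx\ell_n$ giving $R(r)/r\approx k_n$, we want $k_n$ to be comparable to $(\log\ell_n/\log\log\ell_n)^q$; solving the resulting recursion shows this is consistent precisely in the regime $q>2/d$ — the exponent $2/d$ emerging because the interface in $\R^d$ is a codimension-one object, so padding $d$ orthogonal directions with layers thick enough to host all required corrections costs a factor that scales like (number of corrections)$^{1/d}$ squared against the two-sided nature of the gluing. I would present this as: choose $k_n=\lceil n^{2/d}(\log n)^{\text{something}}\rceil$ (or more carefully a sequence tuned to the target), verify $\sum 1/k_n$ behaves, and then verify the asymptotic membership in the intersection by a direct computation with $\ell_n=\prod k_i$.

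The main obstacle I anticipate is making the gluing rigorous while keeping the overhead as small as the bound demands: one must show that two translated copies of the Delone set $P_n$ can be placed in adjacent blocks and the seam repaired — filling gaps, deleting near-collisions — using only $O(1)$ extra width, without destroying the Delone property (uniform discreteness and relative density) and without spoiling the weak convergence $\mu_n\rightharpoonup\rho\mathcal L$. A clean way to handle this is to never let points come near block boundaries in the first place: design $P_n$ so that a collar of fixed width around $\partial P_n$ contains a rigid standard lattice pattern (independent of $\rho$ and of $n$), so that abutting copies automatically match up and the Delone constants are uniform in $n$ by construction. The correction cells then live strictly in the interior. With this convention the recursion is transparent and the only remaining work is the asymptotic estimate, which I'd do by taking logarithms twice and comparing series. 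A secondary technical point is verifying that the repetitivity function is genuinely two-sided (every $r$-patch appears in every ball of radius $R(r)$), which follows from the self-tiling structure: any $r$-patch with $r\le\ell_n$ sits inside some translate of $P_n$, and every translate of $P_n$ that occurs in $X$ occurs within every window of size $O(\ell_{n+1})$ because $P_{n+1}$ is itself built by tiling with $P_n$ and then $P_{n+1}$ tiles $\R^d$.
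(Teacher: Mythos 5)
Your proposal has the right high-level ingredients (a hierarchical construction, corrections to steer the empirical measure towards $\rho$, and an appeal to Theorem~\ref{prop:non-rectifiable non-realisable}), but the core mechanism is missing, and as written the construction cannot encode a non-constant $\rho$. If at every stage $n$ you literally tile $\R^d$ by a single super-pattern $P_n$, then $X$ is $\ell_n\Z^d$-periodic for every $n$; a periodic Delone set has constant density, so its rescaled empirical measures cannot converge to $\rho\mathcal L$ for any non-constant $\rho$. If instead the copies of $P_n$ inside $P_{n+1}$ are allowed to carry different corrections --- which they must be, for the encoding to work --- then your repetitivity argument (``every translate of $P_n$ that occurs in $X$ occurs within every window of size $O(\ell_{n+1})$'') collapses: those corrected variants of $P_n$ are distinct patterns, and you have not arranged for each variant to recur densely. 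This tension between recurrence (repetitivity) and variety (encoding) is exactly what the paper's \emph{palette} device resolves. The paper maintains $c_n$ distinct colours $\phi_1^{(n)},\dots,\phi_{c_n}^{(n)}\colon\mathcal{Q}_n^d\to[2]$ at each level, not one; condition~(\ref{palette 2}) of Definition~\ref{define:notation} forces every ordered $D=2^d$-tuple of level-$(n-1)$ colours to appear in a fixed strip inside every level-$n$ colour, and that is what drives the repetitivity estimate in Theorem~\ref{main construction}(\ref{eq: repetitivity function}). Your proposal has no analogue of this condition, and the collar-of-rigid-lattice idea addresses only gluing, not recurrence of the corrected variants.

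The exponent $2/d$ also has a different source than you suggest. It is not an interface/codimension-one effect. In the paper, for $2^{p_{n-2}}<2r\le 2^{p_{n-1}}$ an $r$-patch may straddle the boundaries of $D=2^d$ adjacent level-$n$ cells, so to recover it one must locate the whole ordered $D$-tuple inside a single level-$(n+1)$ cell; this yields $R(r)/r\approx 2^{p_n-p_{n-2}}$, a \emph{two}-level jump, whence the factor $2$ in $2/d$. Your bookkeeping asserts $R(r)/r\approx k_n$ (a one-level jump), which, if achievable, would give exponent $1/d$, not $2/d$ --- and indeed your heuristic about ``solving the recursion'' does not actually single out $2/d$, since $k_n\asymp n^q$ and $\ell_n=\prod k_i$ give $\log\ell_n/\log\log\ell_n\asymp n$ for every $q$. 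The paper's actual constraints --- the space condition $2c_n^D\le 2^{p_n-p_{n-1}}$ from fitting all $c_n^D$ combinations in a row, the growth $c_n\to\infty$ needed to kill the encoding error in \eqref{rho bk}, and the tuned choices $p_n-p_{n-1}\approx\frac1d\log_2(Kn^{1+\epsilon_n})$, $c_n\approx 3\lfloor n^{\epsilon_n/(Dd)}\rfloor$ --- are what genuinely produce the stated asymptotic, via Theorems~\ref{main construction} and~\ref{intermediate thm}. Without the palette structure and condition~(\ref{palette 2}), the quantitative part of your argument has no footing.
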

Theorem~\ref{delone}, together with Theorem~\ref{prop:non-rectifiable non-realisable} and the non-realisable densities given by Burago and Kleiner~\cite{burago1998separated}, delivers the first examples of repetitive, non-rectifiable Delone sets with explicit bounds on the repetitivity function. Since, according to \cite[Theorem~B]{inoquio2024rectifiability}, any repetitivity function $R$ of a non-rectifiable Delone set must satisfy $R(r)\in\displaystyle \bigcap_{q<\frac{1}{d}}\omega\left(r \left(\log r\right)^q\right)$, the asymptotic bound on $R(r)$ in our result is close to optimal.
\begin{restatable}{thm}{main} \label{1}
    Let $d \in \N_{\geq 2}$. Then there exists a non-rectifiable repetitive Delone set in $\R^d$ with repetitivity function
    \begin{equation*}
            R(r)\in\bigcap_{q>\frac{2}{d}}o\left(r \left(\frac{\log r}{\log \log r}\right)^q\right).
        \end{equation*}
\end{restatable}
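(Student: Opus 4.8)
The plan is to obtain Theorem~\ref{1} as a short consequence of Theorem~\ref{delone}, Theorem~\ref{prop:non-rectifiable non-realisable}, and the bi-Lipschitz non-realisable densities constructed by Burago and Kleiner~\cite{burago1998separated}. The only point requiring a little care is to fit a non-realisable density into the value band $[\tfrac43,\tfrac53]$ demanded by the hypothesis of Theorem~\ref{delone}.

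First I would invoke the construction of Burago and Kleiner~\cite{burago1998separated}, which produces a measurable non-realisable density $\rho_0\colon[0,1]^d\to\R_{>0}$ whose values may be arranged to lie within any prescribed small neighbourhood of $1$; in particular we may take $\rho_0\colon[0,1]^d\to[\tfrac89,\tfrac{10}9]$. Next I would record the elementary fact that non-realisability is invariant under multiplication by a positive constant: if $c>0$ and $c\rho_0=\Jac(g)$ almost everywhere for some bi-Lipschitz $g\colon[0,1]^d\to\R^d$, then $f\coloneqq c^{-1/d}g$ is again bi-Lipschitz and satisfies $\Jac(f)=c^{-1}\Jac(g)=\rho_0$ almost everywhere, contradicting the non-realisability of $\rho_0$. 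Applying this with $c=\tfrac32$, the function $\rho\coloneqq\tfrac32\rho_0$ is a measurable non-realisable density with $\rho\colon[0,1]^d\to[\tfrac43,\tfrac53]$.

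Then I would feed this $\rho$ into Theorem~\ref{delone} to obtain a repetitive Delone set $X\subseteq\R^d$ that encodes $\rho$ and whose repetitivity function satisfies $R(r)\in\bigcap_{q>2/d}o\!\left(r(\log r/\log\log r)^q\right)$. Finally, since $\rho$ is non-realisable and $X$ encodes $\rho$, Theorem~\ref{prop:non-rectifiable non-realisable} yields that $X$ is non-rectifiable. Hence $X$ is a Delone set in $\R^d$ with the asserted properties, which proves Theorem~\ref{1}.

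The entire substance of the argument lies in the proof of Theorem~\ref{delone}; there is no serious obstacle in the present deduction. The only thing to verify is the scaling observation above, together with the fact that the Burago--Kleiner density can be taken sufficiently close to constant (its maximum-to-minimum ratio being at most $5/4$) so that, after multiplication by $\tfrac32$, it lands inside $[\tfrac43,\tfrac53]$; both are immediate from their construction.
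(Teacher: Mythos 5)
Your proposal is correct and follows essentially the same route as the paper: combine Theorem~\ref{delone} and Theorem~\ref{prop:non-rectifiable non-realisable} with a Burago--Kleiner non-realisable density, after rescaling it into the band $[\tfrac{4}{3},\tfrac{5}{3}]$. The paper dismisses the rescaling as ``a trivial matter'' (and also points to \cite[Theorem~4.8]{dymond2018mapping} for a general-dimension density), whereas you spell it out via the observation that $\Jac(c^{-1/d}g)=c^{-1}\Jac(g)$; this is a correct and slightly more explicit version of the same step.
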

The proof of Theorem~\ref{1}, using Theorems~\ref{prop:non-rectifiable non-realisable} and \ref{delone} and \cite[Theorem~1.2]{burago2002rectifying} is very quick. Therefore, we present it immediately here.
\begin{proof}[Proof of Theorem~\ref{1}]
Choose $\rho:[0,1]^d \to [\frac{4}{3},\frac{5}{3}]$ to be a non-realisable density, given by Burago and Kleiner in \cite[Theorem~1.2]{burago1998separated}. Strictly speaking, \cite[Theorem~1.2]{burago1998separated} provides a non-realisable density $\rho\colon [0,1]^{2}\to [1,1+c]$. It is a trivial matter to adjust the range of values to $[\frac{4}{3},\frac{5}{3}]$ and whilst the results and proofs of \cite{burago1998separated} are given in dimension $d=2$, Burago and Kleiner assure (in a remark directly following \cite[Theorem~1.2]{burago1998separated}) that they are valid, with only minor modifications to the proofs, in all higher dimensions. Note also that the existence of a non-realisable density $\rho\colon [0,1]^{d}\to [\frac{4}{3},\frac{5}{3}]$ is a consequence of \cite[Theorem~4.8]{dymond2018mapping}, where the proof is written in general dimension $d\geq 2$.
By Theorem~\ref{delone}, there exists a Delone set $X$ which is $o(r (\frac{\log r}{\log \log r})^q)-$repetitive for each $q>\frac{2}{d}$ which encodes $\rho$. By Theorem~\ref{prop:non-rectifiable non-realisable}, $X$ is non-rectifiable.
\end{proof}
In the final section of the paper, we explore the optimality of the repetitivity of the Delone set in Theorem \ref{delone}. Theorem \ref{thm: repetitivity implies constant rho} shows how a sufficiently repetitive Delone set can only encode densities which are constant almost everywhere. This shows the restriction faced in the `encoding method' to construct non-rectifiable Delone sets with fast repetitivity. 
\begin{restatable}{thm}{rlogr} \label{thm: repetitivity implies constant rho} Let $d\geq 2$, $\rho:[0,1]^d \to \R_{>0}$ a density and $X$ a repetitive Delone set in $\R^d$ with repetitivity function
\begin{equation*}
    R(r) \in O(r(\log r)^\frac{1}{d}).
\end{equation*} If $X$ encodes $\rho$, then $\rho$ is constant almost everywhere.
\end{restatable}
It would be very interesting if Question \ref{question} were true since Theorem \ref{thm: repetitivity implies constant rho} would rule out the existence of an $O(r(\log r)^{\frac{1}{d}})-$repetitive non-rectifiable Delone set, which would be an improvement on \cite[Theorem B]{inoquio2024rectifiability}.
\subsection{Notation and preliminaries}
\paragraph{Sets and measures} The dimension $d \in \N_{\geq 2}$ is treated as an unspecified constant throughout the paper.
We use $||\cdot||_2$ and $||\cdot||_\infty$ to denote the Euclidean norm and the supremum norm respectively. We use $\text{dist}$ to denote the Euclidean metric in $\R^d$. We write $B(x,r)$ to denote the open ball of radius $r>0$ centred at $x\in \R^d$ with respect to the $\text{dist}$ metric. We briefly write for sets $A,B \subset \R^d$ that their diameter, separation and distance respectively are
\begin{equation*}
       \diam(A) \coloneqq \sup_{x,y \in A}\dist(x,y),\qquad \sep(A) \coloneqq \inf_{x,y \in A}\dist(x,y) \qquad \text{and} \qquad
        \text{Dist}(A,B) \coloneqq \inf_{x \in A, y \in B}\dist(x,y).
\end{equation*}
For $n \in \mathbb{N},$ we write $[n]$ to denote the set $\{1,...,n\}.$
We say a property on a measure space holds \emph{almost everywhere} or \emph{a.e.} if it occurs everywhere on its domain except on a set of measure zero. 
Given a function $g:\R^d\to \R^m$ and measure $\mu$ on $\R^d,$ the pushforward measure $g \# \mu$ on $\R^m$ is defined as
\begin{equation*}
    g \# \mu(A) \coloneqq \mu(g^{-1}(A)), \qquad A \subset \R^m.
\end{equation*}
 We write $\mathcal{L}$ to denote the $d-$dimensional Lebesgue measure on $\mathbb{R}^d.$ For a measurable function $\rho:[0,1]^d \to \R_{>0},$ the measure $\rho\mathcal{L}$ is defined as
\begin{equation*}
    \rho \mathcal{L}(A) \coloneqq \int_{A}\rho\; d\mathcal{L}, \qquad A \subseteq [0,1]^d.
\end{equation*}
Given a measure $\mu$ and a sequence of measures $\mu_n$ on $U \subset \R^d$, we say $\mu_n$ weakly converges to $\mu,$ written $\mu_n \rightharpoonup \mu,$ if for every bounded continuous function $f:U \to \R$
\begin{equation*}
    \int_U f\;d\mu_n \xrightarrow{n\rightarrow\infty} \int_U f \; d\mu.
\end{equation*}
\paragraph{Delone sets}
\begin{define}[\cite{lagarias2003repetitive}, Definition $1.1$] A \emph{separated net}, also known as a \emph{Delone set}, is a discrete set $X \subset \R^d$ satisfying the following two criteria:
\begin{itemize}
    \item (Uniform discreteness) There exists an $r>0$ such that every ball of radius $r$ in $\R^d$ contains at most one point of $X.$
    \item (Relative density) There exists an $R>0$ such that every ball of radius $R$ in $\R^d$ contains at least one point of $X.$
\end{itemize}
    A Delone set $X \subset \R^d$ is called \emph{non-rectifiable} if it admits no bi-Lipschitz bijection with $\Z^d.$
\end{define}
\begin{define}[\cite{lagarias2003repetitive}, $(1.2)$, Definition $1.5$] \label{def: rep_function}
    Let $X$ be a Delone set, $x \in X$ and $r>0.$ An \emph{$r-$patch} centred at $x$ is denoted as $\mathcal{P}_{x,r}$ to be
    \begin{equation*}
        \mathcal{P}_{x,r} \coloneqq X \cap B(x,r).
    \end{equation*}
An \emph{$X-$translate} of $\mathcal{P}_{x,r}$ is an $r-$patch $\mathcal{P}_{y,r}$ with $y \in X$ and $$\mathcal{P}_{y,r} = y-x+\mathcal{P}_{x,r}.$$ Occasionally, when $r$ is specified, we abbreviate the notation $\mathcal{P}_{x,r}$ to $\mathcal{P}_{x}$.

    Given a non-decreasing function $R:(0,\infty) \to (0,\infty)$, we say $X$ is \emph{$R(r)-$repetitive} if for each $r>0$ and each pair of points $x\in X$ and $y\in\R^{d}$, an $X-$translate of the $r-$patch $\mathcal{P}_{x,r}$ intersects $B(y,R)$. The function $R:(0,\infty) \to (0,\infty)$ is called a \emph{repetitivity function} of $X$ and we say that $X$ is $R(r)$-repetitive.
\end{define}
\paragraph{Mappings}
Given two normed spaces $(X,||\cdot||_X)$ and $(Y,||\cdot||_Y)$ and $L>0,$ we say a function $f:X \to Y$ is \emph{$L$-Lipschitz} if for every pair $x,y \in X$ $$||f(x)-f(y)||_Y \leq L||x-y||_X.$$ The smallest such $L$ is called the \emph{Lipschitz constant} of $f$ and is denoted $\text{Lip}(f).$ Further, we say $f$ is \emph{$L-$bi-Lipschitz} if both $f$ and $f^{-1}$ are $L-$Lipschitz.

We refer to a measurable real-valued function $\rho:[0,1]^d \rightarrow \mathbb{R}_{>0}$ as a \textit{density}. Such a density is called \emph{non-realisable} if it admits no bi-Lipschitz solution $f:[0,1]^d \to \R^d$ to $$\rho = \text{Jac}(f)\qquad \text{almost everywhere}.$$

\begin{define}\label{def:rep_mapp}
	Let $M$ be a set. We call a mapping $\Psi\colon \Z^{d}\to M$ \emph{repetitive} if for every $r>0$ there exists $R(r)\geq r$ such that for every pair $x,y\in \Z^{d}$ there exists $w\in\Z^{d}$ such that
	\begin{equation*}
		B(w,r)\subseteq B(y,R) \qquad \emph{and} \qquad
		\Psi(w+z)=\Psi(x+z)
	\end{equation*}
	for all $z\in \Z^{d}\cap B(0,r)$. The function $R\colon (0,\infty) \to (0, \infty)$ is called a \emph{repetitivity function} of $\Psi$ and we say that $\Psi$ is $R(r)$-repetitive.
\end{define}
Finally, for non-negative real-valued functions $f$ and $g,$ we will write
    $f(x) \in o(g(x))$ (equivalently, $g(x) \in \omega(f(x))$) if $\lim_{x \to \infty} \frac{f(x)}{g(x)} = 0,$ and $f(x) \in O(g(x))$ if there exists a constant $C>0$ such that $f(x) \leq Cg(x).$

\section{A general construction of a repetitive Delone set}
There are two notions of repetitivity we use interchangeably, one for functions on the integer lattice and, more usually used in the literature, one for Delone sets. Structurally they depict similar concepts, that a patch in either sense can be found in any ball of a certain size relative to that patch. We formalise this in the following lemma showing that the repetitivity functions of a function and its analogous Delone set have the same growth rate.
\begin{lemma} \label{rep from mapping to net}
	Let $M$ be a non-empty finite set, $\Psi\colon \Z^{d}\to M$ be a repetitive mapping and for each $m\in M$ let $V_{m}$ be a non-empty subset of $[0,1]^{d}$ satisfying that
	\begin{equation}\label{eq:sep_cond}
		\min_{m\in M}\min\set {\sep(V_{m}),\emph{Dist} \br{V_{m},\partial[0,1]^{d}}}>0.
	\end{equation}
Then the set
	\begin{equation*}
X:=\bigcup_{z\in\Z^{d}}z+V_{\Psi(z)}
	\end{equation*}
	is a repetitive Delone set. Furthermore, if $\Psi$ is $R(r)-$repetitive, then $X$ is $(R(r+3\sqrt{d})+\sqrt{d})$-repetitive. 
\end{lemma}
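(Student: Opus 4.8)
The plan is to verify the two Delone conditions for $X$ and then translate the repetitivity of $\Psi$ into repetitivity of $X$ with the claimed quantitative bound.

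\textbf{Step 1: $X$ is uniformly discrete.} Each point of $X$ lies in $z+T_{\Psi(z)}$ for a unique $z\in\Z^d$, since $T_m\subseteq[0,1]^d$ and, by \eqref{eq:sep_cond}, $T_m$ is bounded away from $\partial[0,1]^d$, so $z+T_{\Psi(z)}$ is contained in the open interior of the unit cube $z+[0,1]^d$; in particular, distinct translates $z+[0,1]^d$ have disjoint relevant portions. Let $\delta:=\min_{m\in M}\min\{\sep(T_m),\mathrm{Dist}(T_m,\partial[0,1]^d)\}>0$, which is a positive minimum over a \emph{finite} set $M$. Two distinct points of $X$ in the same cell are at distance $\ge\delta$ (by $\sep(T_{\Psi(z)})\ge\delta$); two points in different cells $z+[0,1]^d$ and $z'+[0,1]^d$ are each at distance $\ge\delta$ from the common boundary region, but one must be a little careful here — the cleanest route is to note each point of $X$ in cell $z$ lies in the concentric sub-cube $z+[\delta,1-\delta]^d$, so points from different cells are at distance $\ge 2\delta$ (considering separation along the coordinate in which the cells differ). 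Hence every ball of radius $\delta$ contains at most one point of $X$.

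\textbf{Step 2: $X$ is relatively dense.} Each $T_m$ is non-empty, so every cell $z+[0,1]^d$ contains at least one point of $X$. Thus every ball of radius $\sqrt d$ (the diameter of a unit cube, which certainly contains some translated unit cube $z+[0,1]^d$ once the radius is $\ge\sqrt d$, or safely radius $2\sqrt d$) contains a point of $X$. So $X$ is a Delone set.

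\textbf{Step 3: repetitivity with the quantitative bound.} Fix $r>0$ and $x,y\in\R^d$. We want a translated copy of $\mathcal P_x=X\cap B(x,r)$ meeting $B(y,R')$ with $R'=R(r+3\sqrt d)+\sqrt d-r$. The idea: every point of $X\cap B(x,r)$ lies in a cell $z+[0,1]^d$ with $z\in\Z^d$ at distance at most $r+\sqrt d$ from $x$; choosing $x',y'\in\Z^d$ near $x,y$ (within $\sqrt d$), the set of cells meeting $B(x,r)$ is contained in $\{x'+z': z'\in\Z^d\cap B(0,r+2\sqrt d)\}$, and to be safe with the patch $B(x,r)$ we work with $\Z^d$-radius $r+3\sqrt d$. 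Apply the repetitivity of $\Psi$ at $\Z^d$-scale $r+3\sqrt d$ to the pair $x',y'$: we get $w\in\Z^d$ with $B(w,r+3\sqrt d)\subseteq B(y',R(r+3\sqrt d))$ and $\Psi(w+z)=\Psi(x'+z)$ for all $z\in\Z^d\cap B(0,r+3\sqrt d)$. Since $\Psi$ agrees on these cells, translating by $v:=w-x'\in\Z^d$ maps the portion of $X$ in the relevant cells around $x'$ exactly onto the corresponding portion of $X$ around $w$: that is, $(v+(X\cap B(x,r)))\subseteq X$, so $v+\mathcal P_x$ is a translated copy of $\mathcal P_x$ contained in $X$. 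It remains to check it meets $B(y,R')$: pick any point $p\in\mathcal P_x$; then $v+p$ lies in a cell $w+z$ with $\|z\|$ small, hence $v+p\in B(w,r+\sqrt d)\subseteq B(y',R(r+3\sqrt d))$, so $v+p\in B(y,R(r+3\sqrt d)+\sqrt d)$, and one sees $R'=R(r+3\sqrt d)+\sqrt d-r\le R(r+3\sqrt d)+\sqrt d$ works provided $B(v+p,\cdot)$ is read correctly — actually since a \emph{translated copy of the patch} meeting $B(y,R')$ is required, and the patch has radius $r$, a copy of $\mathcal P_x$ centred within $R(r+3\sqrt d)+\sqrt d$ of $y$ intersects $B(y, R(r+3\sqrt d)+\sqrt d - r)$ only if... so one takes $R' = R(r+3\sqrt d)+\sqrt d$ as the radius to the centre and the stated $R(r+3\sqrt d)+\sqrt d - r$ as the radius the patch itself is guaranteed to reach; I would track these offsets carefully to land exactly on the stated constant.

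\textbf{Main obstacle.} The Delone conditions are routine bookkeeping. The real care is in Step 3: matching up the discrete $\Z^d$-balls $B(0,r+3\sqrt d)$ with the Euclidean ball $B(x,r)$ so that \emph{every} cell contributing a point to $\mathcal P_x$ is covered by the agreement set of $\Psi$, and then chasing the additive constants ($\sqrt d$ for "nearest lattice point", another $\sqrt d$ for "cell diameter", and the inflation from $r$ to $r+3\sqrt d$) to arrive at precisely $R(r+3\sqrt d)+\sqrt d-r$ rather than something slightly larger. I would fix the convention (e.g. centre the translated patch at $v+x$, or at a lattice point) at the outset to keep these offsets transparent.
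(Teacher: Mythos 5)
Your overall strategy matches the paper's: verify the Delone conditions, then transport the repetitivity of $\Psi$ to $X$ by rounding $x,y$ to nearby lattice points $x',y'$, inflating the scale to $r+3\sqrt d$ so that every lattice cell touching $B(x,r)$ falls inside the agreement set of $\Psi$, and translating by $v:=w-x'$. Steps 1 and 2 are correct (and in fact slightly more careful than the paper's one-line justification of uniform discreteness). The issue is that Step 3 does not close.

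Concretely, by the end of Step 3 you have established only that $v+\mathcal P_x\subseteq X$ and that every point of $v+\mathcal P_x$ lies in $B\bigl(y,\,R(r+3\sqrt d)+\sqrt d\bigr)$, i.e.\ the translated patch is \emph{contained} in the larger ball. The definition requires the translated patch to \emph{intersect} $B(y,\tilde R(r))$ with $\tilde R(r)=R(r+3\sqrt d)+\sqrt d-r$, and containment in $B(y,\tilde R(r)+r)$ does not on its own imply intersection with $B(y,\tilde R(r))$: a nonempty discrete set can sit entirely inside the annulus between the two radii. Your bridging remark --- that a copy of $\mathcal P_x$ centred within $R(r+3\sqrt d)+\sqrt d$ of $y$ must meet $B(y,R(r+3\sqrt d)+\sqrt d-r)$ because the patch ``has radius $r$'' --- is unsound for exactly this reason; the patch need not contain any point at distance $r$ from its centre in the direction of $y$. (There is also a small slip in the same paragraph: $v+p$ lies in $B(w,r+3\sqrt d)$, not $B(w,r+\sqrt d)$, though this happens not to affect the conclusion you draw from it.)

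The clean way to finish, which you are one observation away from, is to forget the sweep over all $p\in\mathcal P_x$ and instead use the single point $v+x$. Since $x\in X\cap B(x,r)=\mathcal P_x$, the point $v+x$ lies in the translated patch $v+\mathcal P_x$. Its distance to $y$ is
\begin{equation*}
\dist(v+x,y)\le \dist(v+x,w)+\dist(w,y')+\dist(y',y)
\le \sqrt d + \bigl(R(r+3\sqrt d)-(r+3\sqrt d)\bigr) + \sqrt d
= R(r+3\sqrt d)-r-\sqrt d,
\end{equation*}
where the middle bound uses $B(w,r+3\sqrt d)\subseteq B(y',R(r+3\sqrt d))$ and the outer two use $\dist(x,x')\le\sqrt d$ and $\dist(y,y')\le\sqrt d$. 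Since $R(r+3\sqrt d)-r-\sqrt d<R(r+3\sqrt d)+\sqrt d-r=\tilde R(r)$, the translated patch $v+\mathcal P_x\subseteq X$ contains the point $v+x\in B(y,\tilde R(r))$, which is exactly what the definition demands. With this replacement the proof lands on the stated constant.
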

\begin{proof}
    Notice that each unit volume cube with vertices in $\Z^d$ contains at least one point of $X$ since for each $m\in M$ the set $V_m$ is non-empty, so $X$ is relatively dense. Moreover, $X$ is uniformly discrete, since for any $x,y\in X$ with $x\neq y$ we have that $\text{dist}(x,y)$ is at least the quantity of \eqref{eq:sep_cond}. To show $X$ is repetitive, fix $r>0$, $x \in X$ and $y \in \R^d.$ We will show that, for some $\Tilde{R}(r)>0,$ there is an $X-$translate of the $r-$patch $\mathcal{P}_{x,r}=X\cap B(x,r)$ that can be found in $B(y,\Tilde{R}(r)).$  Notice that $x \in \Tilde{x} + V_{\Psi(\Tilde{x})} \subseteq \Tilde{x} + [0,1]^d,$ and $y \in \Tilde{y} + [0,1]^d$ for some $\Tilde{x},\Tilde{y} \in \Z^d$ such that $\dist(x,\Tilde{x}) \leq \sqrt{d}$ and $\dist(y,\Tilde{y}) \leq \sqrt{d}$. Also notice that 
    \begin{equation}\label{Px bound}
    \mathcal{P}_{x,r} \subseteq \bigcup_{\substack{a \in \Z^d\\\dist(x,a)< r+\sqrt{d}}} a + V_{\Psi(a)}.
      \end{equation}
  Since $\Psi$ is $R(r)-$repetitive, there exists $\Tilde{w} \in \mathbb{Z}^d$ with $B(\Tilde{w}, r+3\sqrt{d}) \subseteq B(\Tilde{y}, R(r+3\sqrt{d}))$ and for each $z \in \Z^d \cap B(0, r+3\sqrt{d})$
\begin{equation*}
    \Psi(\Tilde{w}+z) = \Psi(\Tilde{x}+z).
      \end{equation*}
Since we take $\dist(x,a) < r+\sqrt{d}$ for each $a$ in (\ref{Px bound}), we know that $a = \Tilde{x} + z$ for some $z \in \Z^d \cap B(0, r+2\sqrt{d}).$ Therefore, 
\begin{equation} \label{eq: Lemma 2.1 main}
\begin{split}
        \mathcal{P}_{x,r} \subseteq \bigcup_{\substack{a \in \Z^d\\\dist(x,a)< r+\sqrt{d}}} a + V_{\Psi(a)}&\subseteq \bigcup_{z \in \Z^d \cap B(0,r+2\sqrt{d}) } \Tilde{x}+z + V_{\Psi(\Tilde{x}+z)}
         =\Tilde{x} - \Tilde{w} + \bigcup_{z \in \Z^d \cap B(0,r+2\sqrt{d}) }  \Tilde{w} + z + V_{\Psi(\Tilde{w}+z)}
        \\& = \Tilde{x} - \Tilde{w} + \bigcup_{z \in \Z^d \cap B(\Tilde{w},r+2\sqrt{d}) } z + V_{\Psi(z)} \subseteq \Tilde{x} - \Tilde{w}+X,
        \end{split}
\end{equation}
and 
\begin{equation} \label{eq: Lemma 2.1 sub}
\begin{split}
    \mathcal{P}_{x,r} &\subseteq \Tilde{x} - \Tilde{w} + \bigcup_{z \in \Z^d \cap B(\Tilde{w},r+2\sqrt{d}) } z + V_{\Psi(z)} \subseteq \Tilde{x}-\Tilde{w}+B(\tilde{y},R(r+3\sqrt{d})).
    \end{split}
\end{equation}
Now take $\Tilde{R}(r) \coloneqq R(r+3\sqrt{d})+\sqrt{d}.$ Then we obtain that $\mathcal{P}_{x,r} - \Tilde{x} + \Tilde{w} \subseteq B(y, \Tilde{R}(r)) \cap X$ using (\ref{eq: Lemma 2.1 main}), \eqref{eq: Lemma 2.1 sub} and that $\dist(y, \Tilde{y}) \leq \sqrt{d}$. 

Setting $$u:=x-\Tilde{x}+\Tilde{w},$$ we note that $u\in X$ and $u-x+\mathcal{P}_{x,r}=\mathcal{P}_{x,r}-\widetilde{x}+\widetilde{w}\subseteq B(y,\Tilde{R}(r))\cap X$. To complete the proof, we verify that $$u-x+\mathcal{P}_{x,r}=\mathcal{P}_{u,r}.$$ The ``$\subseteq$'' inclusion is easy because $u-x+\mathcal{P}_{x,r}\subseteq X$ and $-x+\mathcal{P}_{x,r}\subseteq B(0,r)$. We now verify the ``$\supseteq$'' inclusion. Suppose $v\in \mathcal{P}_{u,r}=X\cap B(u,r)$. Then there exists $\Tilde{v}\in \Z^{d}$ such that $v\in\Tilde{v}+V_{\Psi(\Tilde{v})}$, implying that $\norm{\Tilde{v}-u}<r+\sqrt{d}$. Hence $\norm{\Tilde{v}-\Tilde{w}}\leq \norm{\Tilde{v}-u}+\norm{u-\Tilde{w}}<r+2\sqrt{d}$, which ensures $$\Psi(\Tilde{v})=\Psi(\widetilde{w}+(\Tilde{v}-\Tilde{w}))=\Psi(\Tilde{x}+(\Tilde{v}-\Tilde{w})).$$ Therefore $\Tilde{x}+(\Tilde{v}-\Tilde{w})+V_{\Psi(\Tilde{v})}\subseteq X$, implying $\Tilde{x}-\Tilde{w}+v\in X$. Moreover, we have $\norm{\Tilde{x}-\Tilde{w}+v-x}=\norm{v-u}<r$. Hence $\Tilde{x}-\Tilde{w}+v\in\mathcal{P}_{x,r}$, which rearranges to $v\in -\Tilde{x}+\Tilde{w}+\mathcal{P}_{x,r}=u-x+\mathcal{P}_{x,r}$, as required to prove $u-x+\mathcal{P}_{x,r}\supseteq \mathcal{P}_{u,r}$.
\end{proof}
Lemma \ref{rep from mapping to net} implies that if $\Psi$ is $R(r)-$repetitive, then $X$ is $O(R(r))-$repetitive. We will use this fact later. We now begin the main general construction of a repetitive mapping $\Psi:\Z^d \to [2].$ From now on, the role of the dimension $d \in \N_{\geq 2}$ will be that of a constant. Since the quantity $2^{d}$ will appear often, we introduce the notation $D:=2^{d}.$ We will introduce helpful notation in the next few definitions. 
\begin{define} \label{define: criteria for Psi before} 
\begin{enumerate}[(i)]
   \item\label{def:cubic} A \emph{cubic set} $T \subset \Z^d$ is a set of the form
    \begin{equation*}
        T = \prod_{i=1}^d\{x_i,x_i+1,...,x_i+t-1\}
    \end{equation*}
    for $(x_1,...,x_d) \in \Z^d$ and $t \in \N.$ We call $(x_1,...,x_d)$ the \emph{base point} of $T$, denoted $\emph{bp}(T)$, and $t$ the \emph{sidelength} of $T.$ 
        We refer to the point $(x_1+t-1,...,x_d+t-1) \in \Z^d$ as the \emph{maximal corner} of $T.$ 
        \item \label{natural partition}
    Let $s,t \in \N$ and $(x_1,...,x_d) \in \Z^d$. Let $T \subset \Z^d$ be a cubic set with sidelength $t$ and base point $\emph{bp}(T)=(x_1,...,x_d)$, and suppose $s|t.$ Then we will say the \emph{$s-$natural partition} of $T$ is the lexicographically ordered collection $\mathcal{N}_{T,s}$ of disjoint cubic sets with sidelength $s$ covering $T.$ That is,
    \begin{equation*}
        \mathcal{N}_{T,s}\coloneqq \bigg\{\prod_{i=1}^d \{b_i+y_is,...,b_i+y_is+s-1\}:(y_1,...,y_d) \in \bigg\{0,...,\frac{t}{s}-1\bigg\}^d\bigg\}
    \end{equation*}
   and the elements of $\mc{N}_{T,s}$ are ordered lexicographically with respect to $(y_1,...,y_d).$ 
        \end{enumerate}
    \end{define}
\begin{define} \label{define: criteria for Psi}
   Let $d \in \N_{\geq 2}$, $D \coloneqq 2^d$, $p_0 \coloneqq 0,$ and $(p_n)_{n\in \N} \subseteq \N$ and $(c_n)_{n \in \N}\subseteq \N_{\geq 2}$ be two sequences satisfying for each $n \in \N$ that
   \begin{equation*}
        c_n \leq (2^{p_n-p_{n-1}-1})^{\frac{1}D}.
   \end{equation*}
  For each $n \in \N$, we denote the cubic set $\mathcal{Q}_n^d \coloneqq \{0,...,2^{p_{n-1}}-1\}^d$. 
We call a collection of functions 
   \begin{equation*}
\phi_1^{(n)},...,\phi_{c_n}^{(n)}: \mathcal{Q}_n^d \to [2]
\end{equation*} 
a \emph{palette} (of colours) at level $n.$ For $j \in [c_n]$, the function $\phi_j^{(n)}$ is referred to as the $j^{th}$ colour at level $n$.  
For each $n \in \N$ and $i \in [c_n^D],$ let $H_i^{(n)}$ be the cubic set with sidelength $2^{p_{n-1}+1}$ with base point given by
\begin{equation*}
\emph{bp}(H_i^{(n)})= (2^{p_{n-1}+1}(i-1),0,0,...,0).
\end{equation*}
We have the constraint
     $2c_n^D \leq 2^{p_n - p_{n-1}}$
so that we have enough space to fit $\bigcup_{i=1}^{c_{n}^D} H_i^{(n)}$ in one row along the base of $\mc{Q}_{n+1}^{d}$.

We say that a sequence of palettes $(\phi_{j}^{(n)})_{n\in\N, j\in [c_{n}]}$ is \emph{good} if it satisfies the following two criteria:
\renewcommand{\labelenumi}{(\Alph{enumi})}
 \begin{enumerate}[(A)] 
     \item \label{palette 1} For every $n \in \N_{\geq 2}$, let $\mathcal{Z}_{n}$ be the collection of cubic sets in the $2^{p_{n-2}}-$natural partition of $\mathcal{Q}_n^d.$ Then for every $j\in [c_{n}]$ and $T \in \mathcal{Z}_n$ there exists some $q=q_{j,T} \in [c_{n-1}]$ such that
     \begin{equation*} 
         \phi_j^{(n)}(x) = \phi_{{q}}^{(n-1)}(x-\emph{bp}(T))\qquad \text{for all $x\in T$}.
     \end{equation*} In other words, each colour in the palette at the $n^{th}$ level is made up only of colours from the palette at the $(n-1)^{th}$ level.
     \item \label{palette 2} For every $n \in \N_{\geq 2}$ and $i \in [c_{n-1}^D],$ let $(H_{i,l}^{(n-1)})_{l \in [D]}$ be the ordered natural $2^{p_{n-2}}$-partition of the set $H_i^{(n-1)}$ into $D$ cubic sets. Let $(a_{i}^{(n-1)})_{i\in [c_{n-1}^{D}]}$ be the lexicographical ordering of  $[c_{n-1}]^D$. Then for each $j \in [c_n]$ we have
     \begin{equation*} 
         \phi_j^{(n)}(x) = \phi^{(n-1)}_{a_{i,k}}(x-\emph{bp}(H_{i,k}^{(n-1)}))\qquad\text{for all $x\in H_{i,k}^{(n-1)}$, $i\in[c_{n-1}^{D}]$ and $k\in [D]$,}
     \end{equation*}where $a_{i,k}=(a_{i}^{(n-1)})_{k}\in [c_{n-1}]$.
     In other words, $\bigcup_{i=1}^{c_{n-1}^D}H_{i}^{(n-1)}$ positioned `along the base' of each colour at the $n^{th}$ level `contains' every ordered $D-$combination of colours at the $(n-1)^{th}$ level.
 \end{enumerate}
\end{define} 
\paragraph{A quick aside about colours and palettes}
A helpful way to interpret the concepts in Definition \ref{define: criteria for Psi} is to consider each colour $\phi_i^{(n)}$ as an element of $\{1,2\}^{\mathcal{Q}_n^d}.$ Pictorially, one may consider drawing $\phi_i^{(n)}$ by considering each unit cube with integer vertices whose bottom-left corner is an element of ${\mathcal{Q}_n^d},$ and then putting a $1$ (or a light shade) or a $2$ (or a dark shade) into each cube dependent on the value of $\phi_i^{(n)}$ on its bottom-left corner. Consider the example in Figure \ref{fig: example} with $d = 2$ and $p_{n-1} = 2.$ Each $4\times 4$ `tile' represents a colour and we call the collection of tiles the palette at the $n^{th}$ level. At this level the sidelength of each tile is $2^{p_{n-1}}.$ In this sense, $(p_n)_{n \in \N}$ controls the `complexity' of each colour, that is, the shade of each tile (see Claim \ref{bij prop} for a precise definition) can be controlled more finely if $p_{n-1}$ is large. When constructing at a sequence of good palettes as in Definition \ref{define: criteria for Psi}, point \ref{palette 1} demands that we use copies of the colours at the $n^{th}$ level only to tile $\mathcal{Q}_{n+1}^d;$ in other words, some combination of the tiles in our example (with repetition allowed) is glued together into a larger tile (of sidelength $2^{p_n})$ to form each of the colours at level $n+1.$ Since the number of tiles at the $n^{th}$ level is $c_n,$ the sequence $(c_n)_{n \in \N}$ controls how many colours in our palette we have to `paint' the colours at the level above, hence justifying the use of artistic terminology in our construction. 

\begin{figure}[h] 
\centering
\scalebox{1}{\begin{tikzpicture}
\centering
\fill[red,fill opacity = 0.1] (0,0) rectangle (0.8,0.8);
\fill[red,fill opacity = 0.1,xshift = -4cm] (0,0) rectangle (0.8,0.8);
\fill[red,fill opacity = 0.7] (0,0) rectangle (0.2,0.2);
\fill[red,fill opacity = 0.7,xshift = -4cm] (0,0) rectangle (0.2,0.2);
\fill[red,fill opacity = 0.7] (0.6,0) rectangle (0.8,0.2);
\fill[red,fill opacity = 0.7,xshift = -4cm] (0.4,0) rectangle (0.6,0.2);
\fill[red,fill opacity = 0.7] (0.6,0.6) rectangle (0.8,0.8);
\fill[red,fill opacity = 0.7] (0.4,0.2) rectangle (0.6,0.4);
\fill[red,fill opacity = 0.7] (0,0.4) rectangle (0.4,0.8);
\draw[step=0.2cm,black,very thick,xshift = -4cm] (0,0) grid (0.8,0.8);
\draw[black,very thick] (4,0) rectangle (4.4,0.4);
\fill[red,fill opacity = 0.7] (4,0) rectangle (4.4,0.4);
\draw[black,very thick, yshift = 0.6cm] (4,0) rectangle (4.4,0.4);
\fill[red,fill opacity = 0.1, yshift = 0.6cm] (4,0) rectangle (4.4,0.4);
\draw[<->,xshift = 0.1cm] (4.4,0.2) -- (5,0.2);
\draw[<->,yshift = 0.6cm,xshift = 0.1cm] (4.4,0.2) -- (5,0.2);
\draw[<->] (0,1.1) -- (0.8,1.1);
\draw[<->,xshift = -4cm] (0,1.1) -- (0.8,1.1);
\node at (0.4,1.3) {$2^{p_{n-1}}$};
\node at (-3.6,1.3) {$2^{p_{n-1}}$};
\draw[<->] (0.6,-0.1) -- (0.8,-0.1);
\draw[<->,xshift = -4cm] (0.6,-0.1) -- (0.8,-0.1);
\node at (0.7,-0.4) {$1$};
\node at (-3.3,-0.4) {$1$};
\node at (5.3,0.8) {$1$};
\node at (5.3,0.2) {$2$};
\node at (-5,0.5) {$\phi_1^{(n)}$};
\node at (-4.3,-0.3) {$(0,0)$};
\filldraw [gray] (-4,0) circle (2pt);
\draw[step=0.2cm,black,very thick] (0,0) grid (0.8,0.8);
\node at (-1,0.5) {$\phi_{c_n}^{(n)}$};
\node at (-0.3,-0.3) {$(0,0)$};
\filldraw [gray] (0,0) circle (2pt);
\node[draw=none] (ellipsis1) at (-2,0.4) {$\hdots$};
\end{tikzpicture}}
\caption{Example of a palette of colours}\label{fig: example}
\end{figure}
In this paper, we will reserve the final colour in each palette, $\phi_{c_n}^{(n)},$ to be an `approximation' of a density $\rho:[0,1]^d \to [\frac{4}{3},\frac{5}{3}]$. This is more naturally noticed in this tiling-style description of colours - namely, to create the $n^{th}$ `approximation' of $\rho$ we put, into each unit cube with integer vertices and bottom-left corner in $\mathcal{Q}_n^d$, the average value of $2^{p_{n-1}}\rho$ over that cube, rounded to the nearest integer. This ensures the eventual Delone set we construct has the property of \emph{encoding} $\rho$ (see Definition \ref{defn: X encoding rho}), which in conjunction with Theorem \ref{prop:non-rectifiable non-realisable} gives us the desired property of non-rectifiability if we choose $\rho$ to be a non-realisable density. Finally, the purpose of point \ref{palette 2} of Definition \ref{define: criteria for Psi} is to ensure that the eventually constructed Delone set is repetitive (see the proof of Theorem \ref{intermediate thm}). 

The next two lemmas are preliminary to Theorem \ref{main construction}, the major result of this chapter.
\begin{lemma} \label{Lemma: colours inside colours}
     Let the sequences $(p_n)_{n \in \N \cup \{0\}}$ and $(c_n)_{n \in \N}$ be as in Definition \ref{define: criteria for Psi}, and assume a sequence of palettes $(\phi_{j}^{(n)})_{n\in\N, j\in [c_{n}]}$ obeys point (\ref{palette 1}) from Definition \ref{define: criteria for Psi}. Let $n \in \N$, $m \in [n] \cup \{0\}$ and $T$ be a cubic set in the $2^{p_{m}}-$natural partition of $\mc{Q}^{d}_{n+1}$. Then there exists some $j \in [c_{m+1}]$ such that for each $x \in T$
    \begin{equation*}
        \phi_1^{(n+1)}(x) = \phi_j^{(m+1)}(x-\emph{bp}(T)).
    \end{equation*}
\end{lemma}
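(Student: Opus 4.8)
The plan is to prove this by downward induction on $m$, starting from $m=n$ and decreasing to $m=0$. The base case $m=n$ is essentially trivial: the $2^{p_n}$-natural partition of $\mc{Q}_{n+1}^d$ consists of cubic sets of sidelength $2^{p_n}$, which is exactly the sidelength of $\mc{Q}_{n+1}^d$ itself, so the partition is the singleton $\{\mc{Q}_{n+1}^d\}$ and for $T=\mc{Q}_{n+1}^d$ we have $\mathrm{bp}(T)=0$, so the claim reads $\phi_1^{(n+1)}(x)=\phi_1^{(n+1)}(x)$, which holds with $j=1\in[c_{n+1}]$. (One should double check whether the intended base case is $m=n$ with the $2^{p_n}$-partition — since the sequence $p_\bullet$ is increasing, $2^{p_n}\mid 2^{p_n}$ trivially — or whether a slightly different indexing is cleaner; in any case the top of the induction is the identity statement.)

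For the inductive step, suppose the statement holds for some $m+1\in[n]$ (i.e.\ $0\le m\le n-1$), and let $T$ be a cubic set in the $2^{p_m}$-natural partition of $\mc{Q}_{n+1}^d$. The key observation is that $T$ is contained in a unique cubic set $T'$ of the $2^{p_{m+1}}$-natural partition of $\mc{Q}_{n+1}^d$ (here I use $2^{p_m}\mid 2^{p_{m+1}}$, which holds because $(p_n)$ is increasing; if the paper only guarantees $p_m\le p_{m+1}$ one still gets divisibility of the powers of two), and moreover $T$ sits inside $T'$ as one of the cells of the $2^{p_m}$-natural partition of $T'$ — that is, of a cubic set of sidelength $2^{p_{m+1}}=2\cdot 2^{(m+1)-1}\cdot\ldots$; more precisely $T-\mathrm{bp}(T')$ is a cell of the $2^{p_m}$-natural partition of $\mc{Q}_{m+2}^d=\{0,\dots,2^{p_{m+1}}-1\}^d$. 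By the inductive hypothesis applied to $T'$, there is $j'\in[c_{m+2}]$ with $\phi_1^{(n+1)}(x)=\phi_{j'}^{(m+2)}(x-\mathrm{bp}(T'))$ for all $x\in T'$, in particular for all $x\in T$. Now apply criterion (\ref{palette 1}) of Definition~\ref{define:notation} at level $m+2$: since $T-\mathrm{bp}(T')$ is an element of the $2^{p_m}$-natural partition $\mathcal{S}_{m+2}$ of $\mc{Q}_{m+2}^d$, there exists $q=q_{j',\,T-\mathrm{bp}(T')}\in[c_{m+1}]$ with $\phi_{j'}^{(m+2)}(y)=\phi_q^{(m+1)}(y-\mathrm{bp}(T-\mathrm{bp}(T')))$ for all $y$ in that cell. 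Substituting $y=x-\mathrm{bp}(T')$ and noting $\mathrm{bp}(T-\mathrm{bp}(T'))=\mathrm{bp}(T)-\mathrm{bp}(T')$, we get $\phi_1^{(n+1)}(x)=\phi_q^{(m+1)}(x-\mathrm{bp}(T))$ for all $x\in T$, so $j:=q\in[c_{m+1}]$ works and the induction is complete.

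The main thing to get right — and the only place where there is any real content — is the bookkeeping in the inductive step: verifying that a cell of the fine ($2^{p_m}$) partition of $\mc{Q}_{n+1}^d$ is genuinely a cell of the $2^{p_m}$-natural partition of the coarse cell $T'$ containing it, after translating $T'$ back to the origin, so that criterion (\ref{palette 1}) is applicable at level $m+2$ with $n$ there replaced by the relevant index. This is a routine but slightly fiddly check that the natural partitions are nested compatibly and that base points add as expected; once it is in place, everything else is just chaining the two identities (inductive hypothesis, then (\ref{palette 1})) and tracking the shift by $\mathrm{bp}(T)$. No estimates on $p_n$ or $c_n$ are needed — only the divisibility relations among the $2^{p_k}$ and the structural hypothesis (\ref{palette 1}).
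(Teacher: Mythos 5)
Your proof is correct and follows essentially the same route as the paper: downward induction on $m$, with the base case $m=n$ handled trivially (the only cell is $\mc{Q}_{n+1}^d$ itself, $j=1$), and the inductive step passing from $T$ to the containing coarse cell $T'$, applying the inductive hypothesis to $T'$, and then invoking criterion (\ref{palette 1}) at level $m+2$ to the cell $T-\mathrm{bp}(T')$ of the $2^{p_m}$-natural partition of $\mc{Q}_{m+2}^d$. The only difference is cosmetic reindexing ($m\mapsto m+1$ in the step), and your bookkeeping of the base points and the divisibility $2^{p_m}\mid 2^{p_{m+1}}$ matches the paper's argument.
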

\begin{proof}
    Fix $n \in \N.$ The lemma is true for $m = n$ since the only cubic set in the natural $2^{p_{n}}-$partition of $\mathcal{Q}_{n+1}^{d}$ has base point $0$, so we may take $j=1$. Now suppose the lemma is true for $m \in [n]$ and we will show it is true for $m-1.$ Let $T$ be a cubic set in the $2^{p_{m-1}}-$natural partition of $\mathcal{Q}_{n+1}^d$, and let $\widetilde{T}$ be the cubic set in the $2^{p_{m}}-$natural partition of $\mathcal{Q}_{n+1}^d$ containing $T.$ By the induction hypothesis, there exists $j_{0}\in [c_{m+1}]$ such that for each $x \in \widetilde{T}$ 
    \begin{equation} \label{eqn: colours inside colours 1}
        \phi_1^{(n+1)}(x) = \phi_{{j_{0}}}^{(m+1)}(x-\text{bp}(\widetilde{T})).
    \end{equation} Thus, the induction hypothesis gives us a formula for $\phi_{1}^{(n+1)}(x)$ when $x$ belongs to the cubic set $\widetilde{T}$. In the induction step, we manipulate \eqref{eqn: colours inside colours 1} to obtain a similar representation of $\phi_{1}^{(n+1)}(x)$ when $x$ belongs to the smaller cubic set $T\subset \widetilde{T}$. Acknowledging that $T - \text{bp}(\widetilde{T})$ is a set in the $2^{p_{m-1}}-$natural partition of $\mathcal{Q}_{m+1}^d$, we let $j:=q_{j_{0},T-\text{bp}(\widetilde{T})}$ from \eqref{palette 1} so that for each $y \in T-\text{bp}(\widetilde{T})$
    \begin{equation} \label{eqn: colours inside colours 2}
        \phi_{{j_{0}}}^{(m+1)}(y) = \phi_j^{(m)}(y-\text{bp}(T-\text{bp}(\widetilde{T}))) = \phi_j^{(m)}(y-\text{bp}(T)+\text{bp}(\widetilde{T})).
    \end{equation} Putting equations (\ref{eqn: colours inside colours 1}) and (\ref{eqn: colours inside colours 2}) together, writing $x = y+\text{bp}(\widetilde{T}),$ we get for each $x \in T$ that $\phi_1^{(n+1)}(x) = \phi_j^{(m)}(x-\text{bp}(T)),$ as required.
\end{proof}
\begin{lemma} \label{Delone spans space}
    Let $p_0 \coloneqq 0$ and $(p_n)_{n \in \N}\subseteq \N$ be a sequence satisfying $p_{n}-p_{n-1}\geq 2$ for each $n \in \N$. For each $n \in \N$, let $H_1^{(n)}$ be as defined in Definition \ref{define: criteria for Psi}, and let $s_n \coloneqq -\left(\sum_{j=1}^{n-1}2^{p_j}\right)(1,1,...,1)$. Then for each $n \in \N,$ we have that $s_n+H_1^{(n)} \subset s_{n+1}+H_1^{(n+1)}$ and
    \begin{equation*}
        \bigcup_{n = 1}^\infty \left(s_n+H_1^{(n)}\right) = \Z^d.
    \end{equation*} 
\end{lemma}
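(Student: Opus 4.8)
The statement to prove is Lemma~\ref{Delone spans space}: with $s_n = -\left(\sum_{j=1}^{n-1}2^{p_j}\right)(1,\dots,1)$, the translated cubes $s_n + R_1^{(n)}$ form an increasing chain exhausting $\Z^d$. The plan is to unwind the definition of $R_1^{(n)}$ from Definition~\ref{define:notation}: it is the cubic set with sidelength $2^{p_{n-1}+1}$ and base point $0$, i.e.\ $R_1^{(n)} = \{0,1,\dots,2^{p_{n-1}+1}-1\}^d$. Hence $s_n + R_1^{(n)}$ is the cube $\prod_{i=1}^d \{\,-\sigma_{n-1},\,\dots,\,-\sigma_{n-1}+2^{p_{n-1}+1}-1\,\}$ where $\sigma_{n-1} := \sum_{j=1}^{n-1}2^{p_j}$ (and $\sigma_0 = 0$). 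So the whole lemma reduces to two elementary one-dimensional inequalities about the intervals $I_n := \{-\sigma_{n-1},\dots,-\sigma_{n-1}+2^{p_{n-1}+1}-1\}$.

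First I would verify the nesting $I_n \subseteq I_{n+1}$, which amounts to checking the left endpoints satisfy $-\sigma_n \le -\sigma_{n-1}$ and the right endpoints satisfy $-\sigma_{n-1}+2^{p_{n-1}+1}-1 \le -\sigma_n + 2^{p_n+1}-1$. The left-endpoint inequality is immediate since $\sigma_n = \sigma_{n-1} + 2^{p_n} \ge \sigma_{n-1}$. The right-endpoint inequality rearranges to $2^{p_n} + 2^{p_{n-1}+1} \le 2^{p_n+1}$, i.e.\ $2^{p_{n-1}+1} \le 2^{p_n}$, which holds precisely because $p_n - p_{n-1} \ge 2$ (the hypothesis is used here, and only here essentially). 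This gives $s_n + R_1^{(n)} \subseteq s_{n+1} + R_1^{(n+1)}$.

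Second I would show the union is all of $\Z^d$. Since the sets are nested, it suffices to show that every coordinate interval $I_n$ eventually contains any fixed integer $k$; equivalently, to show $-\sigma_{n-1} \to -\infty$ and $-\sigma_{n-1} + 2^{p_{n-1}+1}-1 \to +\infty$ as $n\to\infty$. Both are clear: $\sigma_{n-1} = \sum_{j=1}^{n-1}2^{p_j} \ge n-1 \to \infty$ (each term is at least $2$, indeed at least $1$), giving the first; and $-\sigma_{n-1} + 2^{p_{n-1}+1} - 1 = -\sum_{j=1}^{n-2}2^{p_j} + (2^{p_{n-1}+1} - 2^{p_{n-1}}) - 1 = -\sigma_{n-2} + 2^{p_{n-1}} - 1 \ge 2^{p_{n-1}} - 1 - \sigma_{n-2}$, and since $p_{n-1} \ge 2(n-1)$ forces $2^{p_{n-1}}$ to dominate $\sigma_{n-2} \le (n-2)2^{p_{n-2}} \le (n-2)2^{p_{n-1}-2}$, the right endpoint also tends to $+\infty$. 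Then for any $x=(x_1,\dots,x_d)\in\Z^d$, pick $n$ large enough that every $x_i \in I_n$, so $x \in s_n + R_1^{(n)}$, proving the union is $\Z^d$.

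I do not expect any genuine obstacle here — the lemma is essentially a bookkeeping statement that the cubes $R_1^{(n)}$, once re-centred by the telescoping shift $s_n$, grow monotonically and unboundedly in every direction. The only place where care is needed is matching the right endpoint of $I_n$ against that of $I_{n+1}$ so that the hypothesis $p_n - p_{n-1} \ge 2$ is seen to be exactly what makes the nesting work; everything else is arithmetic with finite geometric sums. It may be cleanest to phrase the whole argument coordinate-wise from the start, reducing to a single claim about the real intervals $[-\sigma_{n-1}, -\sigma_{n-1}+2^{p_{n-1}+1})$ and then noting that a point of $\Z^d$ lies in the product iff each coordinate lies in the corresponding interval.
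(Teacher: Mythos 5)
Your overall strategy is the same as the paper's: reduce the statement to one coordinate, check the nesting of the integer intervals, and then argue that both endpoints of the interval escape to $-\infty$ and $+\infty$ respectively. Your nesting argument (direct endpoint comparison) is a clean, slightly more elementary variant of the paper's (which embeds $R_1^{(n)}$ into the first cell $R_{1,1}^{(n+1)}$ of the $2^{p_n}$-partition of $R_1^{(n+1)}$), and both are fine. Two points need fixing, one minor and one genuine.

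Minor: you claim that $p_n - p_{n-1} \geq 2$ is used ``here, and only here essentially,'' namely in the nesting. In fact the nesting only needs $p_n - p_{n-1} \geq 1$ (you only need $2^{p_{n-1}+1} \leq 2^{p_n}$). The hypothesis $\geq 2$ is genuinely needed for the unboundedness of the right endpoint, which is exactly where your argument has a gap.

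Genuine gap: to show the right endpoint $-\sigma_{n-2} + 2^{p_{n-1}} - 1 \to +\infty$, you bound $\sigma_{n-2} \leq (n-2)\,2^{p_{n-2}} \leq (n-2)\,2^{p_{n-1}-2}$. But then $2^{p_{n-1}} - \sigma_{n-2} \geq 2^{p_{n-1}}\bigl(1 - \tfrac{n-2}{4}\bigr)$, and the factor $1 - \tfrac{n-2}{4}$ is negative for $n > 6$, so this bound does not show the right endpoint tends to $+\infty$. The crude estimate $\sigma_{n-2} \leq (n-2)\,2^{p_{n-2}}$ (treating all $n-2$ terms as if they were as large as the last one) loses a factor of $n$. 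What is needed is the geometric-series bound, which is what the paper uses: from $p_j - p_{j-1} \geq 2$, telescope to get $p_j \leq p_{n-2} - 2(n-2-j)$, so
\begin{equation*}
\sigma_{n-2} = \sum_{j=1}^{n-2} 2^{p_j} \leq 2^{p_{n-2}} \sum_{k=0}^{\infty} 4^{-k} = \tfrac{4}{3}\,2^{p_{n-2}} \leq \tfrac{1}{3}\,2^{p_{n-1}},
\end{equation*}
whence $-\sigma_{n-2} + 2^{p_{n-1}} - 1 \geq \tfrac{2}{3}\,2^{p_{n-1}} - 1 \to +\infty$. With this replacement your proof is complete.
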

\begin{proof}
As in the notation of Definition \ref{define: criteria for Psi}, for each $n \in \N$ notice that $H_1^{(n)} \subset H_{1,1}^{(n+1)}.$ Moreover, $H_{1,1}^{(n+1)} \subset H_1^{(n+1)} -2^{p_n}(1,1,...,1) = H_1^{(n+1)}+s_{n+1}-s_n.$ These two observations give us that $H_1^{(n)}+s_n \subset H_1^{(n+1)}+s_{n+1}.$ 
Now we show that $\bigcup_{n=1}^\infty (s_n+H_1^{(n)}) = \Z^d.$ 
Observe that for each $n \in \N,$ 
the base point of the cubic set $s_{n}+H_{1}^{(n)}$ is 
\begin{equation*}
    s_n=-\left(\sum_{j=1}^{n-1}2^{p_j}\right)(1,1,...,1),\qquad \text{ where }-\left(\sum_{j=1}^{n-1}2^{p_j}\right)\xrightarrow{n\rightarrow\infty} -\infty,
\end{equation*}
whilst the maximal corner (see Definition~\ref{define: criteria for Psi before}\eqref{def:cubic}) of the cubic set $s_{n}+H_{1}^{(n)}$ is 
\begin{equation*}
    s_n + (2^{p_{n-1}+1}-1)(1,1,...,1) =\left(-\left(\sum_{j=1}^{n-1}2^{p_j}\right) + 2^{p_{n-1}+1}-1\right)(1,1,...,1).
\end{equation*}
Therefore, the proof will be complete if we show that
\begin{equation*}
        -\left(\sum_{j=1}^{n-1}2^{p_j}\right) + 2^{p_{n-1}+1} \xrightarrow{n\rightarrow\infty} \infty.
    \end{equation*}
    Indeed, since $p_n-p_{n-1} \geq 2$ for each $n \in \N$, we have for each $n \in \N_{\geq 2}$ that
    \begin{equation*}
        \frac{1}{2^{p_{n-1}}} \sum_{j=1}^{n-1}2^{p_j} =  \sum_{j=1}^{n-1}2^{p_j-p_{n-1}} \leq  \sum_{j=1}^{n-1}2^{2(j-(n-1))} \leq  \sum_{j=0}^{\infty}2^{-2j} = \frac{4}{3}.
    \end{equation*}
    Therefore, 
    \begin{equation*}
        -\left(\sum_{j=1}^{n-1}2^{p_j}\right) + 2^{p_{n-1}+1} \geq 2^{p_{n-1}+1}-\frac{4}{3}2^{p_{n-1}} = \frac{2}{3}2^{p_{n-1}} \xrightarrow{n\rightarrow\infty} \infty.
    \end{equation*}
\end{proof}
The following theorem is the main result of this section. It shows how a good sequence of palettes, as in Definition \ref{define: criteria for Psi}, can be manipulated into a repetitive function $\Psi:\Z^d \to [2].$ Moreover, every colour in the the collection of palettes can be `identified' in some cubic section of $\Psi.$  
\begin{thm} \label{main construction}
    Let $d \in \N_{\geq 2}$, $D \coloneqq 2^d$, $p_0 \coloneqq 0$, $(p_n)_{n\in \N} \subseteq \N$ and $(c_n)_{n\in \N} \subseteq \N_{\geq 2}$ be non-decreasing sequences satisfying for each $n \in \N$ that
    \begin{equation} \label{constraint cn pn}
        c_n \leq (2^{p_n-p_{n-1}-1})^{\frac{1}D} \qquad \emph{and} \qquad p_n-p_{n-1} \geq 2. 
    \end{equation}
 Suppose there exists a sequence of palettes $(\phi_{j}^{(n)})_{n\in\N, j\in [c_{n}]}$ which are good according to Definition \ref{define: criteria for Psi}. Then there exists a well-defined function $\Psi:\Z^d \to [2]$ such that
 \begin{enumerate}[(i)]
     \item \label{eq: repetitivity function}  $\Psi$ is repetitive with repetitivity function 
     \begin{equation*}
         R(r) = \sqrt{d}\;2^{p_n} \qquad \text{whenever}\; 2^{p_{n-2}}<2r\leq 2^{p_{n-1}}
     \end{equation*}
 \item \label{condition: every colour found somewhere} for every $n \in \N$ and every $j \in [c_n],$ there exists a cubic set $T_{n,j} \subset \Z^d$ with sidelength $2^{p_{n-1}}$ and base point $\emph{bp}(T_{n,j})$ such that 
 \begin{equation*}
     \Psi(x) = \phi_j^{(n)}(x-\emph{bp}(T_{n,j}))
 \end{equation*} for all $x \in T_{n,j}.$ 
 \end{enumerate}
\end{thm}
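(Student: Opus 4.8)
The plan is to construct $\Psi$ as a limit of the "first colours" $\phi_1^{(n+1)}$, suitably translated so that they form a nested sequence of cubic sets exhausting $\Z^d$. Concretely, using Lemma~\ref{Delone spans space}, set $\Psi$ on $s_{n+1}+R_1^{(n+1)}$ by the rule $\Psi(x) = \phi_1^{(n+1)}(x - s_{n+1})$ (noting $R_1^{(n+1)}$ has base point $0$ and sidelength $2^{p_n+1} \geq 2^{p_n}$, so it does contain a copy of $\mathcal{Q}_{n+1}^d$; more care is needed here since $\phi_1^{(n+1)}$ is only defined on $\mathcal{Q}_{n+1}^d$, so I would either work with the sub-cube $s_{n+1} + \mathcal{Q}_{n+1}^d$ or extend the domain, using property~(\ref{palette 1}) applied to the whole of $R_1^{(n+1)} \subset R_{1,1}^{(n+2)}$ to see that $\phi_1^{(n+2)}$ agrees with a translate of $\phi_1^{(n+1)}$). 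The first task is therefore \textbf{well-definedness}: I must show that for $x$ lying in both $s_n + R_1^{(n)}$ and $s_{n+1}+R_1^{(n+1)}$, the two definitions agree. This is exactly the content of Lemma~\ref{Lemma: colours inside colours}: since $s_n + R_1^{(n)} \subseteq s_{n+1}+R_1^{(n+1)}$ with the inclusion realised (after translating by $-s_{n+1}$) as a cubic set in a $2^{p_{n-1}}$-natural partition, Lemma~\ref{Lemma: colours inside colours} gives that $\phi_1^{(n+1)}$ restricted to that sub-cube is a translate of some $\phi_j^{(n)}$; one then checks using property~(\ref{palette 2}) (which places \emph{every} ordered $D$-tuple of level-$(n-1)$ colours, in particular the constant tuple giving $\phi_1^{(n)}$ in the relevant slot of $R_1^{(n)}$) that this $j$ is forced to be $1$, so the definitions match. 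Since each $x\in\Z^d$ eventually lies in all $s_m+R_1^{(m)}$, $\Psi$ is well-defined on all of $\Z^d$.

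The second task is property~(\ref{condition: every colour found somewhere}): every colour appears somewhere. Fix $n$ and $j\in[c_n]$. By property~(\ref{palette 2}) applied at level $n+1$, the colour $\phi_j^{(n)}$ appears as a translate inside $\phi_1^{(n+1)}$ — namely inside the block $R_i^{(n)}$ corresponding to the ordered $D$-tuple $(j,j,\dots,j)$ (or indeed any tuple with $j$ as its first entry), restricted to its first cubic sub-piece $R_{i,1}^{(n)}$. Tracing through the identification $\Psi(x) = \phi_1^{(n+1)}(x - s_{n+1})$ on $s_{n+1}+R_1^{(n+1)}$, this yields the desired cubic set $T_{n,j}$ with sidelength $2^{p_{n-1}}$ on which $\Psi$ equals the appropriate translate of $\phi_j^{(n)}$. (A small bookkeeping point: property~(\ref{palette 2}) as stated inserts level-$(n-1)$ colours into level-$n$ colours via the sets $R_i^{(n-1)}$; to get level-$n$ colours inside a level-$(n+1)$ colour I apply it with $n$ replaced by $n+1$, so the relevant blocks are $R_i^{(n)}$ sitting along the base of $\phi_1^{(n+1)}$, and these live inside $\mathcal{Q}_{n+1}^d \subseteq R_1^{(n+1)}$ by the constraint $2c_n^D \leq 2^{p_n-p_{n-1}}$ together with $p_n - p_{n-1}\geq 2$.)

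The third and most substantial task is the \textbf{repetitivity function} in~(\ref{eq: repetitivity function}), which I expect to be the main obstacle. Fix $r>0$ and $n$ with $2^{p_{n-2}} < 2r \leq 2^{p_{n-1}}$, and fix $x,y\in\Z^d$. I want $w\in\Z^d$ with $B(w,r)\subseteq B(y, \sqrt d\, 2^{p_n})$ and $\Psi(w+z)=\Psi(x+z)$ for all $z\in\Z^d\cap B(0,r)$. The strategy: the cube $B(0,r)$ around any integer point is contained in a single cubic set $T$ of the $2^{p_{n-1}}$-natural partition of some large $s_{N+1}+\mathcal{Q}_{N+1}^d$ together with at most its immediate neighbours, so $\{x+z : z\in\Z^d\cap B(0,r)\}$ meets at most $2^d$ adjacent cells of the $2^{p_{n-1}}$-natural partition. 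By Lemma~\ref{Lemma: colours inside colours}, on each such cell $\Psi$ is a translate of some level-$n$ colour $\phi_{j}^{(n)}$ (with $m=n-1$ there), and the \emph{ordered tuple} of (up to $2^d$) colours appearing on a consecutive block of cells is, by property~(\ref{palette 2}) at level $n+1$, reproduced inside \emph{every} level-$(n+1)$ colour — in particular inside $\phi_1^{(n+2)}$ and hence, via a cell of the $2^{p_n}$-natural partition of a suitable $s_{N+1}+\mathcal{Q}^d_{N+1}$, at a location within distance $\sqrt d\, 2^{p_n}$ of any prescribed $y$. Translating $x$ by the integer vector taking its block of cells to the matching block near $y$ produces $w$; the radius estimate $B(w,r)\subseteq B(y,\sqrt d\, 2^{p_n})$ follows since the matching occurs within one $2^{p_n}$-cell of $y$ and $r \leq 2^{p_{n-1}-1} \leq 2^{p_n}$, with the $\sqrt d$ factor absorbing the passage between $\|\cdot\|_\infty$ (cells) and $\|\cdot\|_2$ (balls). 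The delicate points I would work carefully are: (a) ensuring the block of cells containing $\{x+z\}$ really has length at most two cells per coordinate, so that it fits inside one of the $R_i^{(n)}$-type blocks guaranteed by~(\ref{palette 2}) — this uses $2r \leq 2^{p_{n-1}}$; (b) that property~(\ref{palette 2}) furnishes \emph{every} such ordered block, including "boundary" configurations, which forces me to pad the $2^d$-tuple to a genuine $D$-tuple and possibly to apply the argument at the next level up to absorb neighbours that straddle a coarser cell boundary; and (c) verifying that the copy can be placed so that the ambient larger cube $s_{N+1}+\mathcal{Q}_{N+1}^d$ (chosen large enough to contain both the block near $x$ and the target near $y$) is consistent with the nested definition of $\Psi$, which is immediate from well-definedness. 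Once these are in place, Definition~\ref{def:rep_mapp} is verified with $R(r) = \sqrt d\, 2^{p_n}$, completing the proof.
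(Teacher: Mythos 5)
Your proposal follows essentially the same approach as the paper: define $\Psi$ by placing a translate of the first colour $\phi_1^{(\cdot)}$ on each $s_n+R_1^{(n)}$, check consistency via Lemma~\ref{Lemma: colours inside colours} together with property~(\ref{palette 2}) (which is exactly the paper's Claim~\ref{claim:phiNM}), obtain~(\ref{condition: every colour found somewhere}) by locating $\phi_j^{(n)}$ inside one of the sets $R_i^{(n)}$ guaranteed by~(\ref{palette 2}), and prove repetitivity by covering $B(x,r)\cap\Z^d$ with a $2\times\cdots\times 2$ block of cells in the $2^{p_{n-1}}$-aligned partition (the paper's Claim~\ref{claim: psi partition into colours}), then using~(\ref{palette 2}) at level $n+1$ to find a matching $R_j^{(n)}$ block inside the level-$(n+1)$ cell $P_y$ containing $y$.

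Two small points worth noting. First, of the two fixes you offer for the domain mismatch between $R_1^{(n+1)}$ (sidelength $2^{p_n+1}$) and $\mathcal{Q}_{n+1}^d$ (sidelength $2^{p_n}$), only the second is viable: replacing $s_{n+1}+R_1^{(n+1)}$ by $s_{n+1}+\mathcal{Q}_{n+1}^d$ does not exhaust $\Z^d$ (the maximal corner of $s_{n+1}+\mathcal{Q}_{n+1}^d$ is $\left(-\sum_{j=1}^{n-1}2^{p_j}-1\right)(1,\ldots,1)\to-\infty$), so one must use a level-$(n+2)$ colour on $s_{n+1}+R_1^{(n+1)}$, which is exactly the paper's choice $\Psi(x)=\phi_1^{(N+1)}(x-s_N)$ with $N$ minimal such that $x\in s_N+R_1^{(N)}$. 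Second, the concern in your point~(b) about padding a $2^d$-tuple to a $D$-tuple is vacuous since $D:=2^d$; the $2\times\cdots\times2$ block of level-$n$ cells around $x$ already has exactly $D$ cells and matches the shape of $R_j^{(n)}$ directly, so no boundary-straddling or upward recursion is needed beyond what you already describe.
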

\begin{proof} 
For each $n \in \N$, let $H_1^{(n)}$ be as defined in Definition \ref{define: criteria for Psi}, and let $s_n \coloneqq -\left(\sum_{j=1}^{n-1}2^{p_j}\right)(1,1,...,1)$. By Lemma \ref{Delone spans space}, we have
    \begin{equation*}
        \bigcup_{n = 1}^\infty \left(s_n+H_1^{(n)}\right) = \Z^d.
    \end{equation*} With this in mind, we define the function $\Psi:\Z^d\rightarrow [2]$ as follows. For each $x \in \Z^d,$ choose $N$ to be the smallest natural number such that $x \in s_N+H_1^{(N)}.$ Then we prescribe
    \begin{equation*}
        \Psi(x) \coloneqq \phi_1^{(N+1)}(x-s_N).
    \end{equation*}
    This is a well-defined function since the domain of $\phi_1^{(N+1)}$ contains $H_1^{(N)}.$ We now need the following claim:

    \begin{claim} \label{claim:phiNM}
        If $x \in s_M + H_1^{(M)}$ for some $M \in \N$ with $M \geq N,$ then $\Psi(x) = \phi_1^{(M+1)}(x-s_M).$ 
    \end{claim}
    \begin{proof}
    We refer to the notation from Definition \ref{define: criteria for Psi}. 
        We know that $\Psi(x) = \phi_1^{(N+1)}(x-s_N)$ where $N$ is minimal satisfying $x \in s_N + H_1^{(N)}.$ Notice by Lemma \ref{Delone spans space} that $x \in s_N + H_1^{(N)}\subseteq s_M + H_1^{(M)},$ so $\phi_1^{(M+1)}(x-s_M)$ is well-defined. 
        Notice that \begin{equation} \label{eqn: periodicity sn}
               s_{N} -s_M = -\left(\sum_{j=1}^{N-1}2^{p_j}-\sum_{j=1}^{M-1}2^{p_j}\right)(1,1,...,1) = \sum_{j=N}^{M-1}2^{p_j} (1,1,...,1)\in 2^{p_N}\Z^d.
           \end{equation} 
           We now apply Lemma \ref{Lemma: colours inside colours}. Let the cubic set $T$ in the $2^{p_N}-$natural partition of 
$\mc{Q}^{d}_{M+1}$ be so that $\text{bp}(T) = s_N-s_M \in 2^{p_N}\Z^d$. Note that $x-s_N \in H_1^{(N)}$ and so $x-s_M \in H_1^{(N)}+s_N-s_M \subset T$. Therefore, for some $j \in [c_{N+1}]$
\begin{equation*}
    \phi_1^{(M+1)}(x-s_M) = \phi_j^{(N+1)}(x-s_M - \text{bp}(T)) = \phi_j^{(N+1)}(x-s_M - (s_N-s_M)) =\phi_j^{(N+1)}(x-s_N).
\end{equation*}
Finally, we use that $x-s_N \in H_1^{(N)}$ and so by point \eqref{palette 2}, $x-s_N \in H_{1,k}^{(N)}$ for some $k \in [D]$ and
\begin{equation*}
    \phi_j^{(N+1)}(x-s_N) = \phi_{a_{1,k}}^{(N)}(x-s_N-\text{bp}(H_{1,k}^{(N)})) = \phi_1^{(N+1)}(x-s_N)=\Psi(x).
\end{equation*}
    \end{proof}
    We now show point (\ref{condition: every colour found somewhere}) holds. Fix $n \in \N$ and $j \in [c_n].$ Using the lexicographic notation from Definition \ref{define: criteria for Psi}, there exists some $i \in [c_{n}^D]$ and $k \in [D]$ such that $(a_{i}^{(n-1)})_{k} = a_{i,k} =j.$ Then take $T_{n,j} \coloneqq H_{i,k}^{(n)}+s_{n+1} \subset H_1^{(n+1)}+s_{n+1}.$ For each $x \in T_{n,j}$ we have
    \begin{equation*}
        \Psi(x) = \phi_1^{(n+2)}(x-s_{n+1}) = \phi_1^{(n+1)}(x-s_{n+1})= \phi_{a_{i,k}}^{(n)}(x-s_{n+1}-\text{bp}(H_{i,k}^{(n)}))= \phi_j^{(n)}(x-\text{bp}(T_{n,j})).
    \end{equation*}
 The first equality uses Claim \ref{claim:phiNM} and the definition of $\Psi$ using that $x-s_{n+1} \in H_1^{(n+1)}.$ The second and third equalities use point (\ref{palette 2}) of Definition \ref{define: criteria for Psi}, acknowledging that $x-s_{n+1} \in H_{i,k}^{(n)} \subset H_{1,1}^{(n+1)}$ and $\text{bp}(H_{1,1}^{(n+1)})=0$. 
It remains to show point (\ref{eq: repetitivity function}). We will first prove the following claim:
    \begin{claim} \label{claim: psi partition into colours}
        Let $n \in \N.$ Let $\mathcal{W}_n$ be the partition of $\Z^d$ into cubic sets with sidelength $2^{p_{n-1}}$ and base points in the set $s_{n}+2^{p_{n-1}}\Z^d.$ Then for each $W \in \mathcal{W}_n,$ there exists $j \in [c_n]$ such that for each $x \in W$
        \begin{equation*}
            \Psi(x) = \phi_j^{(n)}(x-\emph{bp}(W)).
        \end{equation*}
    \end{claim}
    \begin{proof}
        There exists some $N \geq n$ such that $W \subseteq H_1^{(N)}+s_N$ by Lemma \ref{Delone spans space}. Therefore, by Claim \ref{claim:phiNM}, for each $x \in W$
        \begin{equation} \label{eqn: claim 1}
           \Psi(x)= \phi_1^{(N+1)}(x-s_N).
        \end{equation}
       Notice $W-s_N$ is in the $2^{p_{n-1}}-$natural partition of $H_1^{(N)}.$ Indeed, we have that $W-s_N \subseteq H_1^{(N)}$ and $W-s_N$ is a cubic set of sidelength $2^{p_{n-1}},$ so it suffices to show that $\text{bp}(W-s_N) \in 2^{p_{n-1}}\Z^d.$ We know that $W \in \mathcal{W}_n,$ so $\text{bp}(W) \in s_n+2^{p_{n-1}}\Z^d,$ and so 
       \begin{equation*}
           \text{bp}(W-s_N) = \text{bp}(W)-s_N \in s_n-s_N + 2^{p_{n-1}}\Z^d \subseteq 2^{p_{n-1}}\Z^d,
           \end{equation*} where for the last set inclusion we use (\ref{eqn: periodicity sn}).
        Therefore, by Lemma \ref{Lemma: colours inside colours}, there exists some $j \in [c_n]$ so that for each $y \in W-s_N$
        \begin{equation} \label{eqn: claim 2}
            \phi_1^{(N+1)}(y) = \phi_j^{(n)}(y-\text{bp}(W-s_N))=\phi_j^{(n)}((y+s_N)-\text{bp}(W)).
        \end{equation} Setting $y = x-s_N$ in (\ref{eqn: claim 2}),  we get from (\ref{eqn: claim 1}) that
            $\Psi(x) = \phi_j^{(n)}(x-\text{bp}(W)),$ as required.
    \end{proof} 
    We return to the proof of point (\ref{eq: repetitivity function}). Fix $x,y \in \Z^d$ and $r>0$ such that $2^{p_{n-2}}<2r \leq 2^{p_{n-1}}$ for some $n \in \N_{\geq 2}.$ There is a cubic set $U$ of sidelength $2^{p_{n-1}+1}$ with base point in $2^{p_{n-1}}\Z^{d}$ such that
    \begin{equation*}
        B(x,r)\cap \Z^{d}\subseteq U.
    \end{equation*}
    We consider the $2^{p_{n-1}}-$naturally partition of $U$ into $D \coloneqq 2^d$ cubic sets $U_1,...,U_D$, ordered according to Definition~\ref{define: criteria for Psi}\eqref{def:cubic} and with base points in $2^{p_{n-1}}\Z^d.$ By Claim \ref{claim: psi partition into colours}, there exists $(a_1,a_2,...,a_D) \in [c_{n}]^D$ such that for each $i \in [D]$ and $z \in U_i$
    \begin{equation} \label{eqn:8}
        \Psi(z)= \phi_{a_i}^{(n)}(z-\text{bp}(U_i)).
    \end{equation}
    By Claim \ref{claim: psi partition into colours}, since $y$ belongs to a cubic set $W_y \in \mathcal{W}_{n+1},$ there exists some $l \in [c_{n+1}]$ so that for each $z \in W_y$ 
    \begin{equation} \label{eqn: 2pn around y}
        \Psi(z) = \phi_l^{(n+1)}(z-\text{bp}(W_y)).
    \end{equation}
     By point (\ref{palette 2}) of Definition \ref{define: criteria for Psi}, there exists some $j \in [c_n^D]$ so that the cubic set $H_{j}^{(n)}=\bigcup_{i=1}^{D}H_{j,i}^{(n)}$ satisfies for each $i \in [D]$ and $z \in H_{j,i}^{(n)}$
    \begin{equation}\label{eqn: the right Dcolouring}
        \phi_l^{(n+1)}(z) = \phi_{a_i}^{(n)}(z-\text{bp}(H_{j,i}^{(n)})).
    \end{equation}
    Combining equations (\ref{eqn: 2pn around y}) and (\ref{eqn: the right Dcolouring}), we get for each $z \in H_{j,i}^{(n)}+\text{bp}(W_y) \subset W_y$ that 
    \begin{equation} \label{eqn:11}
        \Psi(z) = \phi_{a_i}^{(n)}(z-\text{bp}(H_{j,i}^{(n)})-\text{bp}(W_y)).
    \end{equation}
    Equations \eqref{eqn:8} and \eqref{eqn:11} state that for the two cubic sets $U$ and $\text{bp}(W_{y})+H_{j}^{(n)}$ of sidelength $2^{p_{n-1}+1}$ we have that 
    \begin{equation*}
        \Psi(z)=\Psi(z-\text{bp}(W_{y})-\text{bp}(H_{j}^{(n)})+\text{bp}(U)) \qquad \text{for all $z\in \text{bp}(W_{y})+H_{j}^{(n)}$.}
    \end{equation*}
    Therefore, setting $w=\text{bp}(W_{y})+\text{bp}(H_{j}^{(n)})+x-\text{bp}(U)$ we have that 
    \begin{equation*}
    \begin{split}
    B(w,r)\cap \Z^{d}=w-x+B(x,r)\cap\Z^{d}&=\text{bp}(W_{y})+\text{bp}(H_{j}^{(n)})-\text{bp}(U)+B(x,r)\cap\Z^{d}\\
    &\subseteq\text{bp}(W_{y})+\text{bp}(H_{j}^{(n)})-\text{bp}(U)+U=\text{bp}(W_{y})+H_{j}^{(n)},
    \end{split}
    \end{equation*}
     and for all $z\in \Z^{d}\cap B(0,r)$
    \begin{equation*}
        \Psi(w+z)=\Psi(w+z-\text{bp}(W_{y})-\text{bp}(H_{j}^{(n)})+\text{bp}(U))=\Psi(x+z). 
    \end{equation*}
   Moreover, since $B(w,r)\cap \Z^{d} \subset H_{j}^{(n)}+\text{bp}(W_y)\subset W_y$, we have that $\dist({y,z}) \leq \diam(W_y) = \sqrt{d}\; 2^{p_n}$ for each $z \in B(w,r),$ so $B(w,r) \subseteq B(y,\sqrt{d} \; 2^{p_n}).$ Therefore, $\Psi$ is repetitive, and has repetitivity function given by $R(r) = \sqrt{d}\; 2^{p_n}$ for $2^{p_{n-2}}<2r\leq 2^{p_{n-1}}$.
\end{proof}

\section{Encoding a non-realisable density in a sequence of palettes} 
In this section, for a given density $\rho:[0,1]^d \to [\frac{4}{3},\frac{5}{3}],$ we construct a good sequence of palettes in the sense of Definition~\ref{define: criteria for Psi} which approximates $\rho$ in a specific way. 
\begin{thm} \label{intermediate thm} Let $d\in \N_{\geq 2}$, $D\coloneqq 2^{d}$ and $\rho\colon [0,1]^{d}\to \sqbr{\frac{4}{3},\frac{5}{3}}$ be a measurable function. Let $p_0 =0$, $c_1 = 3$ and the sequences $(p_n)_{n \in \N} \subseteq \N$ and $(c_n)_{n \in \N} \subseteq \N_{\geq 3}$ satisfy the following conditions for every $n \in \N_{\geq2}$:
\begin{equation}\label{cn tn pn}
\begin{split}
&c_n \leq \min\{(2^{p_n-p_{n-1}-1})^{\frac{1}D}, D(c_{n-1}-2)^{D+1}+2\} \qquad \emph{and} \qquad \sum_{i=1}^{\infty}c_i^D2^{-d(p_{i}-p_{i-1}-1)}\leq \frac{1}{3}.
\end{split} 
\end{equation}
For each $n \in \N_{\geq 2}$, let 
    $\mathcal{U}_n \coloneqq \{\prod_{i=1}^{d}[(m_{i}-1)2^{p_{n-2}},m_{i}2^{p_{n-2}}):(m_{1},\ldots,m_{d})\in [2^{p_{n-1}-p_{n-2}}]^d\}$ be the standard partition of $[0,2^{p_{n-1}})^{d}$ into $2^{d(p_{n-1}-p_{n-2})}$ half-open cubes of side-length $2^{p_{n-2}}$.
    Then there exists a good sequence of palettes $(\phi_{j}^{(n)})_{n\in\N, j\in [c_{n}]}$, according to Definition~\ref{define: criteria for Psi}, such that for each $n \in \N_{\geq 2}$ there exists $\widetilde{\mathcal{U}_n} \subset \mathcal{U}_n$ with $\abs{\widetilde{\mathcal{U}_n}}=2^{d(p_{n-1}-p_{n-2})}-Dc_{n-1}^D$ and for every $U\in\widetilde{\mathcal{U}_n}$
    \begin{equation} \label{rho bk}
        \abs{\sum_{x\in \Z^{d}\cap U}\phi_{c_{n}}^{(n)}(x)-2^{dp_{n-1}}\int_{2^{-p_{n-1}}U}\rho\,d\leb} \leq \frac{2^{dp_{n-2}}}{c_{n-1}-2}.
    \end{equation}
\end{thm}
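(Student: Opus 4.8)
The plan is to construct the palettes recursively in $n$, producing at level $n$ both the palette $\phi^{(n)}_1,\dots,\phi^{(n)}_{c_n}\colon\mathcal Q^d_n\to[2]$ and the set $\widetilde{\mathcal U_n}$ (which I take to be the $M_n-Dc_{n-1}^D$ cubes of $\mathcal U_n$ disjoint from $\bigcup_iR_i^{(n-1)}$, so its cardinality is as required). Write $\overline g:=2^{-dp_{n-1}}\sum_{x\in\mathcal Q^d_n}g(x)\in[1,2]$ for the mean value of a colour $g$ at level $n$, set $M_n:=2^{d(p_{n-1}-p_{n-2})}=|\mathcal U_n|$, $\theta_n:=Dc_{n-1}^D/M_n=c_{n-1}^D2^{-d(p_{n-1}-p_{n-2}-1)}$, and $\alpha_n:=\sum_{k>n}\theta_k$; the second condition in \eqref{cn tn pn} gives $\theta_n\le\tfrac13$ and $\alpha_n\le\tfrac13$, and $\alpha_{n-1}=\alpha_n+\theta_n$. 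The invariant I maintain at every level is: criteria (A) and (B) hold; the first $c_n-2$ colours (the \emph{scale colours}) have mean values forming an arithmetic‑progression‑like net of $[\tfrac43-\alpha_n,\tfrac53+\alpha_n]$ of mesh at most $\tfrac1{c_n-2}$; the colour $\phi^{(n)}_{c_n-1}$ (the \emph{balancing colour}) is chosen so that $\sum_{j=1}^{c_n}\overline{\phi^{(n)}_j}$ is close to $\tfrac32c_n$; and $\phi^{(n)}_{c_n}$ satisfies \eqref{rho bk}.

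For the recursive step, criterion (A) forces each $\phi^{(n)}_j$ to be a mosaic, over the cubes of $\mathcal U_n$, of level‑$(n-1)$ colours; criterion (B) pins this mosaic down on the $Dc_{n-1}^D$ cubes covering $\bigcup_iR_i^{(n-1)}$ (identically for all $j$), leaving the $F_n:=M_n-Dc_{n-1}^D=|\widetilde{\mathcal U_n}|$ cubes of $\widetilde{\mathcal U_n}$ free. The pinned cubes contribute to $\overline{\phi^{(n)}_j}$ the fixed quantity $M_n^{-1}Dc_{n-1}^{D-1}\sum_q\overline{\phi^{(n-1)}_q}$, which by the level‑$(n-1)$ balancing is $\tfrac32\theta_n$ up to a small error; hence the mean values attainable at level $n$ by filling the free cubes with the level‑$(n-1)$ scale colours (at the first nontrivial level, with the constant colours $1,2$) form an arithmetic progression of astronomically small step whose range is $[\tfrac32\theta_n+(1-\theta_n)(\tfrac43-\alpha_{n-1}),\tfrac32\theta_n+(1-\theta_n)(\tfrac53+\alpha_{n-1})]$, and the identity $\alpha_{n-1}=\alpha_n+\theta_n$ together with $\alpha_{n-1}\le\tfrac13$ shows this range contains $[\tfrac43-\alpha_n,\tfrac53+\alpha_n]$. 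So I may: \emph{(i)} define the scale colours $\phi^{(n)}_1,\dots,\phi^{(n)}_{c_n-2}$ by realising, for $t=1,\dots,c_n-2$, the attainable mean nearest $(\tfrac43-\alpha_n)+\tfrac{t-1}{c_n-3}(\tfrac13+2\alpha_n)$ (read as $\tfrac32$ when $c_n=3$); \emph{(ii)} define $\phi^{(n)}_{c_n}$ by filling each $U\in\widetilde{\mathcal U_n}$ with the level‑$(n-1)$ scale colour whose mean is nearest $\overline\rho_U:=2^{d(p_{n-1}-p_{n-2})}\int_{2^{-p_{n-1}}U}\rho\,d\leb\in[\tfrac43,\tfrac53]$, so that on each such $U$
\begin{equation*}
\Bigl|\,\sum_{x\in\Z^d\cap U}\phi^{(n)}_{c_n}(x)-2^{dp_{n-1}}\!\int_{2^{-p_{n-1}}U}\!\rho\,d\leb\,\Bigr|=2^{dp_{n-2}}\bigl|\overline{\phi^{(n-1)}_{q(U)}}-\overline\rho_U\bigr|\le\frac{2^{dp_{n-2}}}{c_{n-1}-2},
\end{equation*}
which is exactly \eqref{rho bk}, since the level‑$(n-1)$ scale means net $[\tfrac43,\tfrac53]$ with mesh $\le\tfrac1{c_{n-1}-2}$; \emph{(iii)} define the balancing colour $\phi^{(n)}_{c_n-1}$ by taking the attainable mean nearest $3-\overline{\phi^{(n)}_{c_n}}$, which keeps $\sum_j\overline{\phi^{(n)}_j}$ near $\tfrac32c_n$. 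Criteria (A) and (B) then hold by construction and the invariant is reproduced. For the base case, $p_0=0$ makes $\mathcal Q^d_1$ a single point: take $\phi^{(1)}_1,\phi^{(1)}_2,\phi^{(1)}_3$ constant equal to $1,1,2$; (A), (B), \eqref{rho bk} are vacuous at level $1$, while \eqref{rho bk} at level $2$ only demands a value in $\{1,2\}$ within distance $1=\tfrac1{c_1-2}$ of $\overline\rho_U\in[\tfrac43,\tfrac53]$, which is automatic.

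The real work, and the main obstacle, is the error analysis that I have glossed: one must check that at every level the attainable set genuinely contains $[\tfrac43-\alpha_n,\tfrac53+\alpha_n]$ (this is exactly what forces the recursion $\alpha_n=\sum_{k>n}\theta_k$ and the smallness hypothesis $\sum_ic_i^D2^{-d(p_i-p_{i-1}-1)}\le\tfrac13$), that there are enough attainable values and scale colours to realise all the prescribed targets (the place where the hypothesis $c_n\le D(c_{n-1}-2)^{D+1}+2$ is spent), and — most delicately — that the discrepancies caused by the discreteness of the attainable set and by the imperfection of the balancing do not accumulate: they re‑enter the construction one level later carrying the small factor $\theta_{n+1}$, so the slack in the $\alpha_n$‑recursion absorbs them. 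Conceptually the construction is nothing more than ``fill the free cubes so as to hit the desired averages, keeping one colour in reserve to recentre the palette's mean at $\tfrac32$ and one colour in reserve to carry the approximation of $\rho$.''
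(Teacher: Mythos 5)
Your plan follows the same basic strategy as the paper's proof: build the palettes recursively with criterion~(B) pinning the prescribed portion along the base, use the remaining colours as an increasingly fine ``net'' of shades spanning $[\tfrac43,\tfrac53]$, and construct $\phi_{c_n}^{(n)}$ by assigning to each free cube the level-$(n-1)$ shade nearest the local average of $\rho$. The recursive-step bookkeeping and the appearance of $\sum_i c_i^D2^{-d(p_i-p_{i-1}-1)}\le\frac13$ as the slack that controls the drift are also as in the paper.

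The main departure is your \emph{balancing colour} $\phi^{(n)}_{c_n-1}$, reserved to recentre the palette average at $\tfrac32$, and it is unnecessary. The paper uses all $c_n-1$ non-density colours as scale colours and bounds the pinned contribution crudely by $1\le\phi\le 2$: this already gives $b_1^{(m)}\le 1+\sum_{i<m}c_i^D2^{-d(p_i-p_{i-1}-1)}\le\tfrac43$ and $b_{c_m-1}^{(m)}\ge 2-\sum_{i<m}c_i^D2^{-d(p_i-p_{i-1}-1)}\ge\tfrac53$, with no need to track the palette average. Indeed even in your own scheme the error in the palette average without balancing is $O(1/c_n)$ and would be absorbed by the same slack; the balancing colour buys nothing and costs you one scale colour, which makes your net coarser and forces the awkward mesh estimate $\frac{\frac13+2\alpha_{n-1}}{c_{n-1}-3}\le\frac{1}{c_{n-1}-2}$. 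By contrast, with the full $c_n-1$ scale colours the paper obtains the bound $\frac{1}{c_{n-1}-2}$ \emph{exactly} and cleanly: the step sizes $h_l=\bigl\lceil\frac{t_l(c_{l-1}-2)}{c_l-2}\bigr\rceil$ satisfy $h_l\le\frac{c_{l-1}-2}{c_l-2}2^{d(p_{l-1}-p_{l-2})}$, so the consecutive-shade gaps $b_i^{(m)}-b_{i-1}^{(m)}\le 2^{-dp_{m-1}}\prod_{l\le m}h_l$ telescope to $\le\frac{1}{c_m-2}$, with no case split on small $c_m$.

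The estimates you explicitly gloss (``the real work'') are the substance of the paper's inductive Claim 3.2, proved by decomposing $b_1^{(r)}$ and $b_{c_r-1}^{(r)}$ over the free and pinned cubes. Your outline is compatible with this, but as written it is a plan with the key inductive inequalities deferred, and the balancing mechanism, while not wrong, is extra machinery that the paper shows can be dispensed with.
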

\begin{proof} 
Recall the objects $\mc{Q}_{n}^{d}$ and $H_{i}^{(n)}$ for $n\in\N$ and $i\in[c_{n}]$ determined by the sequences $(p_{n})_{n=1}^{\infty}$ and $(c_{n})_{n=1}^{\infty}$ in Definition~\ref{define: criteria for Psi}. Note that the first inequality of \eqref{cn tn pn} imples $p_{n}-p_{n-1}>D+1$ for all $n\geq 2$. For each $n \in \N_{\geq 2}$, let 
\begin{equation}\label{eq:tn}
t_n\coloneqq 2^{d(p_{n-1}-p_{n-2})}-Dc_{n-1}^D
\end{equation}
and note, using the first inequality of \eqref{cn tn pn} and $p_{n-1}-p_{n-2}>1$, that $t_{n}\geq 1$.
For each $n \in \N_{\geq 2}$, let $ \widetilde{\mathcal{U}}_n = \{\widetilde{U_{n,k}}\}_{k \in [t_n]}\subset\mathcal{U}_n$ be the subset of $\mc{U}_{n}$ (equipped with an arbitrary ordering) formed by the $t_n$ half-open cubes not intersecting $[0,2^{p_{n-2}+1}c_{n-1}^D)\times [0,2^{p_{n-2}+1})^{d-1}\supseteq \bigcup_{i=1}^{c_{n-1}^{D}}H_{i}^{(n-1)}$. 
Now we pick the first palette determined by
\begin{equation*}
    \phi_1^{(1)}((0,0,...,0)) = 1, \qquad  \phi_2^{(1)}((0,0,...,0)) = 2 \qquad \text{and} \qquad \phi_3^{(1)}((0,0,...,0)) = 1.
\end{equation*} This trivially satisfies the points $(\ref{palette 1})$ and $(\ref{palette 2})$ from Definition \ref{define: criteria for Psi}, since these only apply to $\phi^{(n)}_{j}$ with $n\geq 2$. 
Fix $n \in \N_{\geq 2}$ and suppose that for each $l \in [n-1],$ the $l^{th}$ palette $(\phi^{(l)}_j)_{j \in [c_{l}]}$ has been determined and satisfies points $(\ref{palette 1})$ and $(\ref{palette 2})$ from Definition \ref{define: criteria for Psi}. We construct the $n^{th}$ palette $(\phi^{(n)}_j)_{j \in [c_{n}]}$ as follows. First, ensure that it satisfies point $(\ref{palette 2})$ from Definition \ref{define: criteria for Psi}. For each $j \in [c_n]$, this determines the value of $\phi^{(n)}_j$ on $\mathcal{Q}_n^d \setminus (\bigcup \widetilde{\mathcal{U}_n} \cap \Z^d)$. 
Now let $\mathcal{M}_n \coloneqq [c_{n-1}-1]^{t_n}.$ Notice 
\begin{equation*} \label{eqn:palette shades} 
    \Sigma(\mc{M}_{n}):=\bigg\{\sum_{k =1}^{t_n}M_k :\; M=(M_k)_{k \in [t_n]} \in \mathcal{M}_n\bigg\} = \{t_n,t_n+1,...,t_n(c_{n-1}-1)-1,t_n(c_{n-1}-1)\}.
\end{equation*} 
Our aim is now to pick out a subset of $\Sigma(\mc{M}_{n})$, containing precisely $c_{n}-1$ elements, in such a way that this subset approximates as well as possible any element of $\Sigma(\mc{M}_{n})$. We also want the subset to contain the maximium and minimum elements of $\Sigma(\mc{M}_{n})$. To achieve this, we begin by nominating an approximate `step size' \begin{equation} \label{cn hn}
    h_{n}:=\bigg\lceil\frac{t_{n}(c_{n-1}-2)}{c_{n}-2}\bigg\rceil.
\end{equation}
Note this sequence is well-defined by the first condition of (\ref{cn tn pn}). The idea is to start at the minimum element of $\Sigma(\mc{M}_{n})$ and then make $c_{n}-1$ steps, each of size either $h_{n}-1$ or $h_{n}$ until we get to the maximum of $\Sigma(\mc{M}_{n})$, choosing an appropriate distribution of $(h_{n}-1)$'s and $h_{n}$'s for the steps. We make this formal as follows: Note that the definition of $h_{n}$ allows us to find $\alpha=\alpha_{n}\in[c_{n}-2]\cup\set{0}$ such that $$\alpha(h_{n}-1)+(c_{n}-2-\alpha) h_{n}=t_{n}(c_{n-1}-2).$$ Then we nominate the subset $$\set{t_{n}+\min\set{k,\alpha}(h_{n}-1)+\max\set{k-\alpha,0}h_{n}\colon k\in [c_{n}-2]\cup\set{0}}\subseteq \Sigma(\mc{M}_{n}).$$
For each $i\in [c_{n}-1]$ we choose $M^{(n,i)}\in \mathcal{M}_n$ such that 
\begin{equation*}
    \sum_{k=1}^{t_n} M^{(n,i)}_k=t_{n}+\min\set{i-1,\alpha}(h_{n}-1)+\max\set{i-1-\alpha,0}h_{n},
\end{equation*}
noting that
\begin{multline} \label{eqn: Mn difference}
    \sum_{k=1}^{t_n} M^{(n,1)}_k=t_{n},\qquad \sum_{k=1}^{t_n} M^{(n,c_{n}-1)}_k=t_{n}(c_{n-1}-1)
    \quad \text{and}\;\; \sum_{k=1}^{t_n} M^{(n,i+1)}_k-\sum_{k=1}^{t_n} M^{(n,i)}_k \in \{h_n-1,h_n\}
\end{multline} for all $i\in[c_{n}-2].$
Moreover, we can ensure for each $i \in [c_{n}-1] \setminus \{1\}$ and $k\in[t_{n}]$ that 
\begin{equation}\label{eq:Mincreasing}
 M^{(n,i)}_{k}-M^{(n,i-1)}_{k}\geq 0.
\end{equation}
We also note that
\begin{equation}\label{eq:first_and_last}
    M^{(n,1)}_{k}=1 \qquad \text{and}\qquad M^{(n,c_{n}-1)}_{k}=c_{n-1}-1 \qquad \text{for all $k\in [t_{n}]$.}
\end{equation}
Now suppose $j \in [c_{n}-1]$. For each $k \in [t_n]$, we assign the colour $M \coloneqq M^{(n,j)}_k$ from the $(n-1)^{th}$ palette to $\Z^d \cap \widetilde{U_{n,k}}.$ That is, for each $x \in \widetilde{U_{n,k}} \cap \Z^d$
\begin{equation}\label{eq:construction}
    \phi^{(n)}_{j}(x)\coloneqq \phi^{(n-1)}_{M}(x-\text{bp}(\Z^d \cap\widetilde{U_{n,k}})).
\end{equation} 
Notice that this construction obeys point $(\ref{palette 1})$ from Definition \ref{define: criteria for Psi}. Now consider the remaining case of $j=c_{n}$. In other words, we construct the last colour of the $n^{th}$ palette $\phi_{c_n}^{(n)}.$ Before we proceed, we will require the following claim. 
\begin{claim} \label{bij prop}
Let $m \in [n-1].$ For each $i \in [c_m]$, let the \emph{shade} of colour $i$ at level $m$ be 
    \begin{equation*} \label{bij}
        b_{i}^{(m)} \coloneqq \frac{1}{2^{dp_{m-1}}} \sum_{x \in \mathcal{Q}_m^d} \phi^{(m)}_i(x).
    \end{equation*} 
     Then the following two points hold: 
    \renewcommand{\labelenumi}{\theenumi.}
    \begin{enumerate}[(i)]
        \item \label{bij prop:point 1}
$b_{i}^{(m)}-b_{i-1}^{(m)} \leq \left(\prod_{l=1}^m h_l\right)2^{-dp_{m-1}}\qquad  \emph{for each}\; i \in [c_{m}-1] \setminus \{1\}$
        \item \label{bij prop:point 2} $b_{1}^{(m)} \leq 1 + \sum_{i=1}^{m-1}c_i^D2^{-d(p_i-p_{i-1}-1)} \leq \frac{4}{3}$ \;\; \emph{and} \;\; $b_{c_{m}-1}^{(m)} \geq 2-\sum_{i=1}^{m-1}c_i^D2^{-d(p_i-p_{i-1}-1)} \geq \frac{5}{3}.$
    \end{enumerate}
\end{claim}
\begin{proof}
Notice that $h_1 = 1$, $b_{1}^{(1)} = 1,$ $b_{c_1-1}^{(1)} = b_{2}^{(1)} = 2,$ and $b_{2}^{(1)}-b_{1}^{(1)} = 2^{-dp_0} = 1,$ so the statement is true for $m = 1$. Let $r \in [n-2] \setminus \{1\}$ and suppose the statement holds for $m = r-1$. We will show it holds for $m = r.$ 
         We have for each $i \in [c_{r}-1]\setminus \{1\}$ that $\phi^{(r)}_i$ and $\phi^{(r)}_{i-1}$ are identical on $\mathcal{Q}_{r}^d \setminus (\bigcup \widetilde{\mathcal{U}_{r}} \cap \Z^d),$ so
        \begin{equation} \label{bij differences}
        \begin{split}
            b_{i}^{(r)}-b_{i-1}^{(r)} &= \frac{1}{2^{dp_{r-1}}}\sum_{k=1}^{t_r}2^{dp_{r-2}}(b_{M^{(r,i)}_k}^{(r-1)}-b_{M^{(r,i-1)}_k}^{(r-1)})
            \leq \frac{1}{2^{dp_{r-1}}}\sum_{k=1}^{t_r} \left(M^{(r,i)}_{k} - M^{(r,{i-1})}_{k}\right)\prod_{l=1}^{r-1} h_l \leq \frac{1}{2^{dp_{r-1}}}\prod_{l=1}^{r} h_l.
            \end{split}
        \end{equation}
We used the induction hypothesis, \eqref{eq:Mincreasing} and the last part of \ref{eqn: Mn difference}. Therefore, point \eqref{bij prop:point 1} is true for $m = r.$ 

Next we prove point \eqref{bij prop:point 2} for $m=r.$ 
    Recall that $\mc{Q}_{r}^{d}$ may be decomposed as the union of $U\cap \Z^{d}$ with $U\in \mc{U}_{r}$. Moreover, we have $\widetilde{\mc{U}_{r}}=(\widetilde{\mc{U}_{r,k}})_{k\in [t_{r}]}\subseteq \mc{U}_{r}$.
    Therefore, we can decompose $b_{1}^{(r)}$ as 
\begin{equation*}
    b_{1}^{(r)} = \frac{1}{2^{d(p_{r-1}-p_{r-2})}} \left({2^{-dp_{r-2}}}\sum_{k\in [t_{n}]}\sum_{x \in \widetilde{\mc{U}_{r,k}}\cap \Z^d} \phi_1^{(r)}(x) + {2^{-dp_{r-2}}}\sum_{U\in \mc{U}_{r}\setminus \widetilde{U_{r}}}\sum_{x\in U\cap \Z^{d}} \phi_1^{(r)}(x) \right).
\end{equation*}
Applying \eqref{eq:tn}, \eqref{eq:construction}, \eqref{eq:first_and_last} with $n=r$, $1\leq \phi_{1}^{(r)}(x)\leq 2$ for all $x\in \mc{Q}_{r}^{d}$ and finally the induction hypothesis to bound $b_{1}^{(r-1)}$, we get
    \begin{equation*}
        \begin{split}
            b_{1}^{(r)} &\leq \frac{b_{1}^{(r-1)}(2^{d(p_{r-1}-p_{r-2})}-Dc_{r-1}^D)+ 2(D c_{r-1}^D)}{2^{d(p_{r-1}-p_{r-2})}} = b_{1}^{(r-1)} + \frac{Dc_{r-1}^D(2-b_{1}^{(r-1)})}{2^{d(p_{r-1}-p_{r-2})}} \leq b_{1}^{(r-1)} + c_{r-1}^D 2^{-d(p_{r-1}-p_{r-2}-1)} 
            \\& \leq 1+\sum_{i=1}^{r-2}c_i^D2^{-d(p_{i}-p_{i-1}-1)} + c_{r-1}^D2^{-d(p_{r-1}-p_{r-2}-1)} = 1+\sum_{i=1}^{r-1}c_i^D2^{-d(p_{i}-p_{i-1}-1)}.
            \end{split}
    \end{equation*}
 Similarly, using a similar decomposition to $b_1^{(r)}$ above we have 
       \begin{equation*}
        \begin{split}
            b_{c_r-1}^{(r)} &\geq \frac{b_{c_{r-1}-1}^{(r-1)}(2^{d(p_{r-1}-p_{r-2})}-Dc_{r-1}^D)+ Dc_{r-1}^D}{2^{d(p_{r-1}-p_{r-2})}} \geq b_{c_{r-1}-1}^{(r-1)} - c_{r-1}^D{2^{-d(p_{r-1}-p_{r-2}-1)}} \\
            & \geq 2-\sum_{i=1}^{r-2}c_i^D2^{-d(p_{i}-p_{i-1}-1)} - c_{r-1}^D2^{-d(p_{r-1}-p_{r-2}-1)} = 2-\sum_{i=1}^{r-1}c_i^D2^{-d(p_{i}-p_{i-1}-1)}.
            \end{split}
    \end{equation*}
    Finally, by the second condition of (\ref{cn tn pn}), we have that $\sum_{i=1}^{\infty}c_i^D2^{-d(p_{i}-p_{i-1}-1)}\leq \frac{1}{3},$
    and the claim follows. 
    \end{proof} 
    
By Claim \ref{bij prop} with $m = n-1$ and the fact that $\rho(x)\in\sqbr{\frac{4}{3},\frac{5}{3}}$ for all $x\in [0,1]^{d}$ it follows that for each $U \in \widetilde{\mathcal{U}_{n}}$ there exists $\alpha =\alpha_U \in [c_{n-1}-1]$ such that
        \begin{equation} \label{rho bk 1}
            \bigg|b_{\alpha}^{(n-1)}-{2^{d(p_{n-1}-p_{n-2})}}\int_{2^{-p_{n-1}}U}\rho \; d\mathcal{L}\bigg| \leq \frac{\prod_{l=1}^{n-1}h_l}{2^{dp_{n-2}}}.
        \end{equation}
          In case multiple $\alpha$ satisfy (\ref{rho bk 1}), choose $\alpha$ arbitrarily. Then we assign $\Z^d \cap U$ the colour $\alpha.$ That is, for each $x \in U\cap \Z^d$ we let
        \begin{equation} \label{eq: def of phicn}
            \phi^{(n)}_{c_n}(x)\coloneqq \phi^{(n-1)}_{\alpha}(x-\text{bp}(\Z^d \cap U)).
        \end{equation}
        Once again, notice that this construction obeys point $(\ref{palette 1})$ from Definition \ref{define: criteria for Psi}.
Using \eqref{eq: def of phicn} along with the fact that $\mathcal{Q}_{n-1}^d = \Z^d \cap U - \text{bp}(\Z^d \cap U)$ for each $U \in \widetilde{U_n},$
\begin{equation} \label{eq:balpha}
    2^{dp_{n-2}}b_{\alpha}^{(n-1)} \coloneqq \sum_{x \in \mathcal{Q}_{n-1}^d}\phi_{\alpha}^{(n-1)}(x) = \sum_{x \in \Z^d \cap U}\phi_{\alpha}^{(n-1)}(x-\text{bp}(\Z^d\cap U )) = \sum_{x \in \Z^d \cap U}\phi_{c_n}^{(n)}(x).
\end{equation}
This completes the construction of the good sequence of palettes $(\phi^{(n)}_{j})_{n\in\N,\,j\in[c_{n}]}$.
      Note $h_1 = 1$ and for each $l \geq 2$ that, from \eqref{cn hn} and \eqref{eq:tn},
        \begin{equation*} \label{eqn: hl bound}
            h_l = \bigg\lceil\frac{(c_{l-1}-2)t_l}{c_{l}-2}\bigg\rceil = \bigg\lceil\frac{(c_{l-1}-2)(2^{d(p_{l-1}-p_{l-2})}-Dc_{l-1}^D)}{c_{l}-2}\bigg\rceil \leq \frac{c_{l-1}-2}{c_l-2} 2^{d(p_{l-1}-p_{l-2}),}
        \end{equation*} 
        using the first condition of \eqref{cn tn pn} for the inequality. 
        Now, multiply \eqref{rho bk 1} through by $2^{dp_{n-2}}$, apply these bounds on $h_{l}$ and replace the first term with the final expression in \eqref{eq:balpha}. This delivers \eqref{rho bk} and completes the proof of Theorem \ref{intermediate thm}.
        \end{proof}

\begin{figure}[h]
\centering
\scalebox{1}{\begin{tikzpicture}
\centering
\draw[fill=red, opacity = 0.6,thick](12,-5) rectangle (13,-4);
\draw[red, fill=white] (12.5,-4.5) circle (0.5cm);
\draw[pink, outer color = pink, inner color = red] (12.5,-4.5) circle (0.5cm);
\node[draw=none] at (12,-5.3) {$0$};
\node[draw=none] at (13,-5.3) {$1$};
\node[draw=none] at (13,-6) {$\rho:[0,1]^2 \to[\frac{4}{3},\frac{5}{3}]$};
\draw[<->]        (10.3,-2.5)   -- (11.8,-4);
\draw[step=1cm,black,thick,yshift = 0.1cm] (0,0) grid (4,2);
\draw[step=1cm,black,thick,yshift = 0.1cm] (6,0) grid (10,2);
\draw[step=2cm,black,very thick,yshift = 0.1cm] (0,0) grid (4,2);
\draw[step=0.1cm,gray,very thin,yshift = 0.1cm] (0,0) grid (4,2);
\draw[step=0.1cm,gray,very thin,yshift = 0.1cm] (6,0) grid (10,2);
\node[draw=none] (ellipsis1) at (5,1) {$\hdots$};
\draw[step=2cm,black,very thick,yshift=0.1cm] (6,0) grid (10,2);
\node[yshift = -9cm] at (-0.5,4.5) {$\phi_1^{(n-1)}$};
\node[yshift = -9cm] at (1.5,4.5) {$\phi_{c_{n-1}}^{(n-1)}$};
\draw[->,yshift = -9cm]        (-0.5,4.3)   -- (0.2,4);
\draw[black, very thick] (0,0.1) -- (12,0.1);
\draw[black, very thick] (0,0.1) -- (0,3.1);
\draw[black, very thick] (10,0.1) -- (10,3.1);
\draw[->,yshift = -9cm]        (1.1,4.2)   -- (0.8,4);
\draw[<->] (0,-0.1) -- (10,-0.1);
\draw (5,-0.4) node[left]{$2^{p_n}$};
\draw[<->,yshift = -1cm]        (0,-0.2)   -- (1,-0.2);
\draw[->, very thick, blue] (-2,-6) -- (-2,3);
\draw[yshift = -9cm] (0,3.4) node[left]{$2^{p_{n-2}}$};
\draw[<->,yshift = -9cm]        (0.1,3.4)   -- (0.5,3.4);
\draw (0.5,-0.6) node[below]{$2^{p_{n-1}}$};
\draw [decorate,yshift = -9cm, decoration={brace,amplitude=5pt,mirror,raise=4ex}]
  (0.1,3.8) -- (0.9,3.8); 
\draw[yshift = -9cm] (1.3,3.7) 
node[right]{Palette of $c_{n-1}$ colours at level $n-1$};
\node[draw = none] at (-1,1){$\phi_1^{(n+1)}$};
\node[draw=none] (ellipsis1) at (10.5,2) {$\hdots$};
\draw[->,yshift = -9cm] (0.5,2.9) to [out=-50,in= -70] (1.5,3.4);
\fill[red, fill opacity=0.1, yshift = -5.5cm] (0.1,0) rectangle (0.5,0.4);
 rectangle (0.7,0.1);
\fill[red, fill opacity = 0.7, yshift = -5.5cm] (0.5,0) rectangle (0.9,0.4);
\draw[step=0.1cm,black,thin,yshift = -0.1cm] (0,0.2) grid (4,0.4);
\draw[step=0.1cm,black,thin,yshift = -0.1cm] (6,0.2) 
grid (10,0.4);
\draw[step=0.1cm,black,thin,yshift = 0.9cm] (0,0.2) grid (4,0.4);
\draw[step=0.2cm,black,thick,yshift = 0.9cm] (0,0.2) grid (4,0.4);
\draw[step=0.2cm,black,thick,yshift = -0.1cm] (0,0.2) grid (4,0.4);
\draw[step=0.2cm,black,thick,yshift = 0.9cm,xshift = 6cm] (0,0.2) grid (4,0.4);
\draw[step=0.2cm,black,thick,yshift = -0.1cm,xshift = 6cm] (0,0.2) grid (4,0.4);
\draw[step=0.1cm,black,thin,yshift = 0.9cm] (6,0.2) 
grid (10,0.4);
\fill[red, fill opacity = 0.6] (2,2) rectangle (2.1,2.1);
\fill[red, fill opacity = 0.7,yshift = 1cm] (7,0.3) rectangle (8,1.1);
\fill[red, fill opacity = 0.5,yshift = 0.1cm] (0.3,0.1) rectangle (0.6,0.2);
\fill[red, fill opacity = 0.5,yshift = 0.1cm] (0.7,0) rectangle (1,0.2);
\fill[red, fill opacity = 0.5,yshift = 0.1cm,xshift = 1cm] (0.3,0.1) rectangle (0.6,0.2);
\fill[red, fill opacity = 0.5,yshift = 0.1cm, xshift = 1cm] (0.7,0) rectangle (1,0.2);
\fill[red, fill opacity = 0.5,yshift = 0.1cm,xshift = 2cm] (0.3,0.1) rectangle (0.6,0.2);
\fill[red, fill opacity = 0.5,yshift = 0.1cm, xshift = 2cm] (0.7,0) rectangle (1,0.2);
\fill[red, fill opacity = 0.5,yshift = 0.1cm,xshift = 3cm] (0.3,0.1) rectangle (0.6,0.2);
\fill[red, fill opacity = 0.5,yshift = 0.1cm, xshift = 3cm] (0.7,0) rectangle (1,0.2);
\fill[red, fill opacity = 0.5,yshift = 1.1cm,xshift =0 cm] (0.3,0.1) rectangle (0.6,0.2);
\fill[red, fill opacity = 0.5,yshift = 1.1cm, xshift = 0cm] (0.7,0) rectangle (1,0.2);
\fill[red, fill opacity = 0.5,yshift = 1.1cm,xshift = 1cm] (0.3,0.1) rectangle (0.6,0.2);
\fill[red, fill opacity = 0.5,yshift = 1.1cm, xshift = 1cm] (0.7,0) rectangle (1,0.2);
\fill[red, fill opacity = 0.5,yshift = 1.1cm,xshift = 2cm] (0.3,0.1) rectangle (0.6,0.2);
\fill[red, fill opacity = 0.5,yshift = 1.1cm, xshift = 2cm] (0.7,0) rectangle (1,0.2);
\fill[red, fill opacity = 0.5,yshift = 1.1cm,xshift = 3cm] (0.3,0.1) rectangle (0.6,0.2);
\fill[red, fill opacity = 0.5,yshift = 1.1cm, xshift = 3cm] (0.7,0) rectangle (1,0.2);
\fill[red, fill opacity = 0.5,yshift = 0.1cm, xshift = 6cm] (0.3,0.1) rectangle (0.6,0.2);
\fill[red, fill opacity = 0.5,yshift = 0.1cm,xshift = 6cm] (0.7,0) rectangle (1,0.2);
\fill[red, fill opacity = 0.5,yshift = 0.1cm,xshift = 7cm] (0.3,0.1) rectangle (0.6,0.2);
\fill[red, fill opacity = 0.5,yshift = 0.1cm, xshift = 7cm] (0.7,0) rectangle (1,0.2);
\fill[red, fill opacity = 0.5,yshift = 0.1cm,xshift = 8cm] (0.3,0.1) rectangle (0.6,0.2);
\fill[red, fill opacity = 0.5,yshift = 0.1cm, xshift = 8cm] (0.7,0) rectangle (1,0.2);
\fill[red, fill opacity = 0.5,yshift = 0.1cm,xshift = 9cm] (0.3,0.1) rectangle (0.6,0.2);
\fill[red, fill opacity = 0.5,yshift = 0.1cm, xshift = 9cm] (0.7,0) rectangle (1,0.2);
\fill[red, fill opacity = 0.5,yshift = 1.1cm,xshift =6 cm] (0.3,0.1) rectangle (0.6,0.2);
\fill[red, fill opacity = 0.5,yshift = 1.1cm, xshift = 6cm] (0.7,0) rectangle (1,0.2);
\fill[red, fill opacity = 0.5,yshift = 1.1cm,xshift = 7cm] (0.3,0.1) rectangle (0.6,0.2);
\fill[red, fill opacity = 0.5,yshift = 1.1cm, xshift = 7cm] (0.7,0) rectangle (1,0.2);
\fill[red, fill opacity = 0.5,yshift = 1.1cm,xshift = 8cm] (0.3,0.1) rectangle (0.6,0.2);
\fill[red, fill opacity = 0.5,yshift = 1.1cm, xshift = 8cm] (0.7,0) rectangle (1,0.2);
\fill[red, fill opacity = 0.5,yshift = 1.1cm,xshift = 9cm] (0.3,0.1) rectangle (0.6,0.2);
\fill[red, fill opacity = 0.5,yshift = 1.1cm, xshift = 9cm] (0.7,0) rectangle (1,0.2);
\fill[red, fill opacity = 0.1,yshift = 0.1cm] (0,0) rectangle (4,2);
\fill[red, fill opacity = 0.7,yshift = 0.1cm] (6,1.8) rectangle (6.2,2);
\fill[red, fill opacity = 0.7,yshift = 0.1cm, xshift = 0.8cm] (6,1.8) rectangle (6.2,2);
\fill[red, fill opacity = 0.7,yshift = 0.1cm, xshift = 2cm] (6,1.8) rectangle (6.2,2);
\fill[red, fill opacity = 0.7,yshift = 0.1cm, xshift = 2.8cm] (6,1.8) rectangle (6.2,2);
\fill[red, fill opacity = 0.7,yshift = 0.1cm, xshift = 3cm] (6,1.8) rectangle (6.2,2);
\fill[red, fill opacity = 0.7,yshift = 0.1cm, xshift = 3.8cm] (6,1.8) rectangle (6.2,2);
\fill[red, fill opacity = 0.7,yshift = -0.9cm, xshift = 2cm] (6,1.8) rectangle (6.2,2);
\fill[red, fill opacity = 0.7,yshift = -1.5cm, xshift = 2cm] (6,1.8) rectangle (6.2,2);
\fill[red, fill opacity = 0.7,yshift = -0.9cm, xshift = 0cm] (6,1.8) rectangle (6.2,2);
\fill[red, fill opacity = 0.7,yshift = -1.5cm, xshift = 0cm] (6,1.8) rectangle (6.2,2);
\fill[red, fill opacity = 0.7,yshift = -0.9cm, xshift = 1cm] (6,1.8) rectangle (6.2,2);
\fill[red, fill opacity = 0.7,yshift = -1.5cm, xshift = 1cm] (6,1.8) rectangle (6.2,2);
\fill[red, fill opacity = 0.7,yshift = -0.9cm, xshift = 3cm] (6,1.8) rectangle (6.2,2);
\fill[red, fill opacity = 0.7,yshift = -1.5cm, xshift = 3cm] (6,1.8) rectangle (6.2,2);
\fill[red, fill opacity = 0.7,yshift = -0.9cm, xshift = 3.8cm] (6,1.8) rectangle (6.2,2);
\fill[red, fill opacity = 0.7,yshift = -1.5cm, xshift = 3.8cm] (6,1.8) rectangle (6.2,2);
\fill[red, fill opacity = 0.7,yshift = -0.9cm, xshift = 2.8cm] (6,1.8) rectangle (6.2,2);
\fill[red, fill opacity = 0.7,yshift = -1.5cm, xshift = 2.8cm] (6,1.8) rectangle (6.2,2);
\fill[red, fill opacity = 0.7,yshift = -0.9cm, xshift = 1.8cm] (6,1.8) rectangle (6.2,2);
\fill[red, fill opacity = 0.7,yshift = -0.5cm, xshift = 0cm] (6,1.8) rectangle (6.2,2);
\fill[red, fill opacity = 0.7,yshift = -1.5cm, xshift = 1.8cm] (6,1.8) rectangle (6.2,2);
\fill[red, fill opacity = 0.7,yshift = -0.9cm, xshift = 0.8cm] (6,1.8) rectangle (6.2,2);
\fill[red, fill opacity = 0.7,yshift = -1.5cm, xshift = 0.8cm] (6,1.8) rectangle (6.2,2);
\fill[red, fill opacity = 0.7,yshift = -0.9cm, xshift = 3.8cm] (6,1.8) rectangle (6.2,2);
\fill[red, fill opacity = 0.7,yshift = -1.5cm, xshift = 3.8cm] (6,1.8) rectangle (6.2,2);
\fill[red, fill opacity = 0.7,yshift = -0.5cm, xshift = 3.8cm] (6,1.8) rectangle (6.2,2);
\fill[red, fill opacity = 0.7,yshift = -0.5cm, xshift = 2.8cm] (6,1.8) rectangle (6.2,2);
\fill[red, fill opacity = 0.7,yshift = -0.5cm, xshift = 2cm] (6,1.8) rectangle (6.2,2);
\fill[red, fill opacity = 0.7,yshift = -0.5cm, xshift = 0.8cm] (6,1.8) rectangle (6.2,2);
\fill[red, fill opacity = 0.7,yshift = -1.2cm, xshift = 0.4cm] (6,1.8) rectangle (6.2,2);
\fill[red, fill opacity = 0.7,yshift = -1.2cm, xshift = 1.4cm] (6,1.8) rectangle (6.2,2);
\fill[red, fill opacity = 0.7,yshift = -1.2cm, xshift = 2.4cm] (6,1.8) rectangle (6.2,2);
\fill[red, fill opacity = 0.7,yshift = -1.2cm, xshift = 3.4cm] (6,1.8) rectangle (6.2,2);
\fill[red, fill opacity = 0.7,yshift = -0.2cm, xshift = 3.4cm] (6,1.8) rectangle (6.2,2);
\fill[red, fill opacity = 0.7,yshift = -0.2cm, xshift = 2.4cm] (6,1.8) rectangle (6.2,2);
\fill[red, fill opacity = 0.7,yshift = -0.2cm, xshift = 0.4cm] (6,1.8) rectangle (6.2,2);
\fill[red, fill opacity = 0.1] (6,0.1) rectangle (10,2.1);
\draw[step=0.1cm,gray,very thin,yshift = -0.2cm] (0,-2.21) grid (4,-1.2);
\draw[step=1cm,black,very thick,yshift = -0.4cm] (0,-2) grid (4,-1);
\draw[step=0.1cm,gray,very thin,yshift = -0.2cm,xshift = 6cm] (0,-2.21) grid (4,-1.2);
\draw[step=1cm,black,very thick,yshift = -0.4cm,xshift = 6cm] (0,-2) grid (4,-1);
\draw[step=0.2cm,black,very thick,yshift = -0.4cm,xshift = 0cm] (0,-2) grid (4,-1.8);
\draw[step=0.2cm,black,very thick,yshift = -0.4cm,xshift = 6cm] (0,-2) grid (4,-1.8);
\node[draw=none] (ellipsis1) at (5,-2) {$\hdots$};
\fill[red, fill opacity = 0.5,yshift = -2.4cm,xshift = 0cm] (0.7,0) rectangle (1,0.2);
\fill[red, fill opacity = 0.5,yshift = -2.4cm,xshift = 0cm] (0.3,0.1) rectangle (0.6,0.2);
\fill[red, fill opacity = 0.5,yshift = -2.4cm,xshift = 1cm] (0.7,0) rectangle (1,0.2);
\fill[red, fill opacity = 0.5,yshift = -2.4cm,xshift = 1cm] (0.3,0.1) rectangle (0.6,0.2);
\fill[red, fill opacity = 0.5,yshift = -2.4cm,xshift = 2cm] (0.7,0) rectangle (1,0.2);
\fill[red, fill opacity = 0.5,yshift = -2.4cm,xshift = 2cm] (0.3,0.1) rectangle (0.6,0.2);
\fill[red, fill opacity = 0.5,yshift = -2.4cm,xshift = 3cm] (0.7,0) rectangle (1,0.2);
\fill[red, fill opacity = 0.5,yshift = -2.4cm,xshift = 3cm] (0.3,0.1) rectangle (0.6,0.2);
\fill[red, fill opacity = 0.5,yshift = -2.4cm,xshift = 6cm] (0.7,0) rectangle (1,0.2);
\fill[red, fill opacity = 0.5,yshift = -2.4cm,xshift = 6cm] (0.3,0.1) rectangle (0.6,0.2);
\fill[red, fill opacity = 0.5,yshift = -2.4cm,xshift = 7cm] (0.7,0) rectangle (1,0.2);
\fill[red, fill opacity = 0.5,yshift = -2.4cm,xshift = 7cm] (0.3,0.1) rectangle (0.6,0.2);
\fill[red, fill opacity = 0.5,yshift = -2.4cm,xshift = 8cm] (0.7,0) rectangle (1,0.2);
\fill[red, fill opacity = 0.5,yshift = -2.4cm,xshift = 8cm] (0.3,0.1) rectangle (0.6,0.2);
\fill[red, fill opacity = 0.5,yshift = -2.4cm,xshift = 9cm] (0.7,0) rectangle (1,0.2);
\fill[red, fill opacity = 0.5,yshift = -2.4cm,xshift = 9cm] (0.3,0.1) rectangle (0.6,0.2);
\fill[red, fill opacity = 0.5,yshift = -1.5cm] (1,0) rectangle (1.1,0.1);
\fill[red, fill opacity = 0.5,yshift = -1.5cm] (2,0) rectangle (2.2,0.1);
\fill[red, fill opacity = 0.5,yshift = -1.5cm] (3,0) rectangle (3.3,0.1);
\fill[red,fill opacity = 0.7, yshift = -1.5cm,xshift = 7.9cm] (0.1,0) rectangle (1.1,0.1);
\fill[red,fill opacity = 0.7, yshift = -1.5cm,xshift = 5.9cm] (1.1,0) rectangle (2,0.1);
\fill[red,fill opacity = 0.7,yshift = -1.5cm,xshift = 3.9cm] (2.1,0) rectangle (2.9,0.1);
\fill[red, fill opacity = 0.7,yshift = -2.4cm,xshift = 6cm] (0,0.2) rectangle (3,0.9);
\fill[red, fill opacity = 0.1,yshift = -2.4cm,xshift = 0cm] (0,0) rectangle (4,1);
\fill[red, fill opacity = 0.1,yshift = -2.4cm,xshift = 9cm] (0,0) rectangle (1,1);
\fill[red, fill opacity = 0.1,yshift = -2.4cm,xshift =6cm] (0,0) rectangle (4,1);
\fill[red, fill opacity = 0.7,yshift = -2.4cm,xshift =9cm] (0,0.2) rectangle (0.2,0.4);
\fill[red, fill opacity = 0.7,yshift = -2.4cm,xshift =9.8cm] (0,0.2) rectangle (0.2,0.4);
\fill[red, fill opacity = 0.7,yshift = -1.8cm,xshift =9cm] (0,0.2) rectangle (0.2,0.4);
\fill[red, fill opacity = 0.7,yshift = -1.8cm,xshift =9.8cm] (0,0.2) rectangle (0.2,0.4);
\fill[red, fill opacity = 0.7,yshift = -2.1cm,xshift =9.4cm] (0,0.2) rectangle (0.2,0.4);
\node at (0.5,-3) {$\phi_1^{(n)}$};
\node at (1.5,-3) {$\phi_2^{(n)}$};
\node at (2.5,-3) {$\phi_3^{(n)}$};
\node at (3.5,-3) {$\phi_4^{(n)}$};
\node at (6.5,-3) {$\phi_{c_n-3}^{(n)}$};
\node at (7.5,-3) {$\phi_{c_n-2}^{(n)}$};
\node at (8.5,-3) {$\phi_{c_n-1}^{(n)}$};
\node at (9.5,-3) {$\phi_{c_n}^{(n)}$};
\draw [decorate,decoration={brace,amplitude=5pt,mirror,raise=4ex}]
  (0,-3) -- (10,-3) node[midway,yshift=-3em]{Palette of colours at level $n$};
\fill[red,fill opacity = 0.7, yshift = 1.3cm,xshift = 9cm] (0,0) rectangle (0.2,0.2);
\label{fig:construction}
\end{tikzpicture}}
\caption{How the palettes for $d=2$ are built at levels $n-1,$ $n$ and $n+1$ in Theorem \ref{intermediate thm} depicted with step size $h_n = 1$. If a small square is coloured pink, then its bottom-left corner is assigned the value $1$, and if it is coloured red, its bottom-left corner is assigned the value $2$. The palette at level $n$ contains colours with every $4-$combination of colours from the $(n-1)^{th}$ palette along its base. Moreover, the colours at level $n$ consist increasing shades from $\phi_1^{(n)}$ to $\phi_{c_{n}-1}^{(n)},$ while $\phi_{c_n}^{(n)}$ is an approximation of the density $\rho.$ If we increase $h_n,$ then the palette at level $n$ becomes `coarser'.}
\end{figure}
  
    \section{Non-rectifiability and repetitivity}
    In the present section we make use of the framework we have established in the previous two sections for constructing repetitive Delone sets. We verify that our construction provides repetitive Delone sets encoding any given density $\rho$ in the sense of Definition~\ref{defn: X encoding rho}.
    The next proposition is an extension of~\cite[Lemma~5.5]{dymond2018mapping}. It provides conditions under which a sequence of measures $(\nu_n)_{n\in\N}$ converges weakly to a measure $\nu$.
 \begin{prop}[\cite{dymond2018mapping}, Lemma $5.5,$ extended] \label{5.5 extended} Let $\nu$ and $(\nu_n)_{n \in \N}$ be finite Borel measures on a compact metric space $\mathcal{K}$. Assume that for each $n \in\N$ there are finite collections $\mathcal{T}_n\supseteq \widetilde{\mc{T}_{n}}$ of Borel subsets of $\mathcal{K}$ satisfying 
    \begin{equation} \label{prop conditions}
        \begin{split}
        \sum_{T \in \mathcal{T}_n}\nu(T) = \nu(\mathcal{K}),&\qquad \lim_{n \to \infty} \max\Bigg\{\nu_n\left(\mathcal{K}\setminus\bigcup_{T \in \widetilde{T}_n} T\right),\nu\left(\mathcal{K}\setminus\bigcup_{T \in \widetilde{T}_n} T\right)\Bigg\} = 0,\\
        \lim_{n \to \infty}\max_{T \in \mathcal{T}_n}\diam(T) = 0& \qquad \emph{and} \qquad \max_{T \in \widetilde{\mathcal{T}_n}}|\nu_n(T)-\nu(T)| \in o\left(\frac{1}{|\mathcal{T}_n|}\right).
        \end{split}
    \end{equation} Then $\nu_n \rightharpoonup \nu$.
    \end{prop}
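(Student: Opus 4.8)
The plan is to establish the weak convergence $\nu_n\rightharpoonup\nu$ by testing against Lipschitz functions. Since $\mathcal{K}$ is a compact metric space, it suffices to prove that $\int_{\mathcal{K}}f\,d\nu_n\to\int_{\mathcal{K}}f\,d\nu$ for every Lipschitz $f\colon\mathcal{K}\to\R$ (equivalently, by the uniform density of Lipschitz functions in $C(\mathcal{K})$, for every continuous $f$): this is the content of weak convergence of the finite measures $\nu_n$ to $\nu$, and it in particular yields $\nu_n(A)\to\nu(A)$ for every Borel $A$ with $\nu(\partial A)=0$. Before the main estimate I would record that $\sup_{n}\nu_n(\mathcal{K})<\infty$. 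Indeed, the members of $\mathcal{Q}_n$ are pairwise $\nu$-essentially disjoint and $\nu$-cover $\mathcal{K}$ by the first hypothesis of \eqref{prop conditions}, so summing $\nu_n(Q)\le\nu(Q)+\max_{Q'\in\widetilde{\mathcal{Q}_n}}|\nu_n(Q')-\nu(Q')|$ over $Q\in\widetilde{\mathcal{Q}_n}$ yields $\nu_n\big(\bigcup\widetilde{\mathcal{Q}_n}\big)\le\nu(\mathcal{K})+|\mathcal{Q}_n|\max_{Q\in\widetilde{\mathcal{Q}_n}}|\nu_n(Q)-\nu(Q)|$, whose last summand is $o(1)$ by the final hypothesis of \eqref{prop conditions}; adding $\nu_n\big(\mathcal{K}\setminus\bigcup\widetilde{\mathcal{Q}_n}\big)\to0$ from the second hypothesis gives $\nu_n(\mathcal{K})=\nu(\mathcal{K})+o(1)$.

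Next I would fix a Lipschitz $f$, write $\ell$ for its Lipschitz constant, put $\delta_n\coloneqq\max_{Q\in\mathcal{Q}_n}\diam(Q)$, and choose a point $x_Q\in Q$ for each $Q\in\widetilde{\mathcal{Q}_n}$. Treating $\widetilde{\mathcal{Q}_n}$ as a disjoint family (as it is in all the applications, where $\mathcal{Q}_n$ is a genuine partition into cubes), I split
\begin{equation*}
\Big|\int_{\mathcal{K}}f\,d\nu_n-\int_{\mathcal{K}}f\,d\nu\Big|\le\infnorm{f}\Big(\nu_n\big(\mathcal{K}\setminus\bigcup\widetilde{\mathcal{Q}_n}\big)+\nu\big(\mathcal{K}\setminus\bigcup\widetilde{\mathcal{Q}_n}\big)\Big)+\sum_{Q\in\widetilde{\mathcal{Q}_n}}\Big|\int_Q f\,d\nu_n-\int_Q f\,d\nu\Big|.
\end{equation*}
For each $Q\in\widetilde{\mathcal{Q}_n}$, using $|f(x)-f(x_Q)|\le\ell\delta_n$ for $x\in Q$, the corresponding summand is at most $\ell\delta_n\nu_n(Q)+\infnorm{f}\,|\nu_n(Q)-\nu(Q)|+\ell\delta_n\nu(Q)$; summing over $Q\in\widetilde{\mathcal{Q}_n}$ bounds the last sum above by
\begin{equation*}
\ell\delta_n\big(\nu_n(\mathcal{K})+\nu(\mathcal{K})\big)+\infnorm{f}\sum_{Q\in\widetilde{\mathcal{Q}_n}}|\nu_n(Q)-\nu(Q)|\le\ell\delta_n\big(\nu_n(\mathcal{K})+\nu(\mathcal{K})\big)+\infnorm{f}\,|\mathcal{Q}_n|\max_{Q\in\widetilde{\mathcal{Q}_n}}|\nu_n(Q)-\nu(Q)|.
\end{equation*}
The crucial input is the final hypothesis of \eqref{prop conditions}, which says exactly that $|\mathcal{Q}_n|\max_{Q\in\widetilde{\mathcal{Q}_n}}|\nu_n(Q)-\nu(Q)|\to0$, so that a per-cell error of order $o(1/|\mathcal{Q}_n|)$ survives summation over the $|\mathcal{Q}_n|$ cells; together with $\delta_n\to0$, boundedness of $\nu_n(\mathcal{K})$ from the first paragraph, and the vanishing of the two boundary-region terms by the second hypothesis of \eqref{prop conditions}, every term on the right tends to $0$, and letting $n\to\infty$ completes the argument.

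I do not anticipate a genuine obstacle: this is essentially the argument of \cite[Lemma~5.5]{dymond2018mapping}, and the only new feature is the shrinking region $\mathcal{K}\setminus\bigcup\widetilde{\mathcal{Q}_n}$, absorbed by the crude estimate $\big|\int_E f\,d\mu\big|\le\infnorm{f}\,\mu(E)$ together with the hypothesis that its $\nu_n$- and $\nu$-mass vanish. The points deserving a little care are the reduction to Lipschitz (equivalently, continuous) test functions and the attendant identification of $\rightharpoonup$ with weak convergence on the compact space $\mathcal{K}$; the uniform bound $\sup_n\nu_n(\mathcal{K})<\infty$ and the order in which limits are taken; and, should one wish to avoid assuming literal disjointness of the family $\widetilde{\mathcal{Q}_n}$, a disjointification step exploiting the $\nu$-essential disjointness provided by the first hypothesis of \eqref{prop conditions}.
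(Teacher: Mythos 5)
Your proof is correct and follows essentially the same route as the paper: split $\int_{\mathcal{K}}$ into $\sum_{Q\in\widetilde{\mathcal{Q}_n}}\int_Q$ plus the tail over $\mathcal{K}\setminus\bigcup\widetilde{\mathcal{Q}_n}$, control the per-cell error via shrinking diameters and the $o(1/|\mathcal{Q}_n|)$ hypothesis, and absorb the tail via the second hypothesis. The paper works with a general continuous test function $\zeta$ and uniform continuity on the compact $\mathcal{K}$ rather than reducing to Lipschitz test functions, and it does not separately record $\sup_n\nu_n(\mathcal{K})<\infty$ (bounding $\nu_n(Q)$ directly by $\nu(Q)+\epsilon/|\mathcal{Q}_n|$ instead), but these are cosmetic differences.
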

    \begin{proof} We will first show 
    \begin{equation} \label{zeta bigger than 1}
    \int_\mathcal{K} \zeta \; d\nu_n \xrightarrow{n\rightarrow\infty} \int_\mathcal{K}\zeta \; d\nu \qquad \text{whenever } \zeta \in C(\mathcal{K},\R) \text{ with } \text{min }\zeta \geq 1.
\end{equation}
        Let $\epsilon\in (0,1)$ and $\zeta \in C(\mathcal{K},\R)$ satisfying $\text{min }\zeta\geq 1$. For each $T \in \widetilde{\mathcal{T}_n},$ choose $z_T \in T$ arbitrarily. Since $\mathcal{K}$ is compact, $\zeta$ is uniformly continuous on $\mathcal{K}$, and so by the third condition of (\ref{prop conditions}) there exists $N_1 \in \N$ such that
        \begin{equation*}
            |\zeta(z_T) - \zeta(x)| \leq \epsilon \qquad \text{for all $n \geq N_1,$ $T \in \widetilde{\mathcal{T}_n}$ and $x \in T$}.
        \end{equation*}
        By the last condition of (\ref{prop conditions}), there exists $N_2 \in \N$ such that
        \begin{equation*}
            \max_{T \in \widetilde{\mathcal{T}_n}}|\nu_n(T)- \nu(T)| \leq \frac{\epsilon}{|\mathcal{T}_n|}\qquad \text{for all $n \geq N_2$}
        \end{equation*}
   and by the second condition of \eqref{prop conditions} there exists $N_3 \in \N$ satisfying 
   \begin{equation*}
    \max\Bigg\{\nu_n\left(\mathcal{K}\setminus\bigcup_{T \in \widetilde{T}_n} T\right),\nu\left(\mathcal{K}\setminus\bigcup_{T \in \widetilde{T}_n} T\right)\Bigg\} \leq \epsilon \qquad \text{for each $n\geq N_{3}$}.
   \end{equation*}
    Now choose $N \coloneqq \max\{N_1,N_2,N_3\}$ and let $n\geq N$. Then for any $T\in\widetilde{\mc{T}_{n}}$ we have
    \begin{equation*}
        \begin{split}
            \int_T \zeta \; d\nu_n &\leq \int_T \zeta(z_T)+\epsilon \; d\nu_n = (\zeta(z_T)+\epsilon)\nu_n(T) \leq (\zeta(z_T)+\epsilon)\left(\nu(T)+\frac{\epsilon}{|\mathcal{T}_n|}\right)\\
            &\leq \int_T (\zeta+2\epsilon) \; d\nu + (\zeta(z_T)+\epsilon)\frac{\epsilon}{|\mathcal{T}_n|} \leq \int_T \zeta\; d\nu + 2\epsilon \nu(T) + (\zeta(z_T)+\epsilon)\frac{\epsilon}{|\mathcal{T}_n|}.
        \end{split}
    \end{equation*}
    Symmetrically, the lower bound obtained is
    \begin{equation*}
        \int_T \zeta\; d\nu_n \geq \int_T \zeta\; d\nu - 2\epsilon \nu(T) -(\zeta(z_T)-\epsilon)\frac{\epsilon}{|\mathcal{T}_n|}.
    \end{equation*}
   Summing up the above bounds over all $T \in \widetilde{\mathcal{T}_n}$ and using that $\zeta(z_T) - \epsilon >0$ and the first condition of (\ref{prop conditions}), we get
    \begin{equation*}
        \bigg|\int_{\bigcup_{T \in \widetilde{T}_n} T}\zeta \; d\nu_n - \int_{\bigcup_{T \in \widetilde{T}_n} T}\zeta \; d\nu\bigg| \leq 2\epsilon \nu(\mathcal{K}) + \epsilon(||\zeta||_\infty + \epsilon).
    \end{equation*} 
    Moreover,
    \begin{equation*}
    \begin{split}
        \bigg|\int_{\mathcal{K}\setminus \bigcup_{T \in \widetilde{T}_n} T}\zeta \; d\nu_n\bigg| \leq ||\zeta||_{\infty} \nu_n\left(\mathcal{K}\setminus\bigcup_{T \in \widetilde{T}_n} T\right) \leq ||\zeta||_\infty\epsilon,
        \end{split}
    \end{equation*} 
    with the same bound obtained for $\bigg|\int_{\mathcal{K}\setminus \bigcup_{T \in \widetilde{T}_n} T}\zeta \; d\nu\bigg|$. Therefore,
    \begin{equation*}
    \begin{split}
        \bigg|\int_\mathcal{K}\zeta \; d\nu_n - \int_\mathcal{K}\zeta \; d\nu\bigg| &\leq \bigg|\int_{\bigcup_{T \in \widetilde{T}_n} T}\zeta \; d\nu_n - \int_{\bigcup_{T \in \widetilde{T}_n} T}\zeta \; d\nu\bigg| + \bigg|\int_{\mathcal{K} \setminus {\bigcup_{T \in \widetilde{T}_n} T}}\zeta \; d\nu_n\bigg| + \bigg|\int_{\mathcal{K} \setminus {\bigcup_{T \in \widetilde{T}_n} T}}\zeta \; d\nu\bigg|\\
        &\leq 2\epsilon \nu(\mathcal{K}) + \epsilon(||\zeta||_\infty + \epsilon) + 2\epsilon||\zeta||_\infty.
        \end{split}
    \end{equation*} Since $\epsilon$ was arbitrary, we obtain \eqref{zeta bigger than 1}.

    Now suppose $\zeta \in C(\mathcal{K},\R),$ and we will show $\int_\mathcal{K} \zeta \; d\nu_n \xrightarrow{n\rightarrow\infty} \int_\mathcal{K}\zeta \; d\nu.$ If $\min\zeta \geq 1,$ then we are done by~\eqref{zeta bigger than 1}. Otherwise, if $\min \zeta < 1,$ we define the function $\psi \in C(\mathcal{K},\R)$ by $$\psi(x) = \zeta(x) + |\text{min } \zeta|+1.$$
    Notice that $\text{min }\psi = \text{min }\zeta + |\text{min }\zeta| +1 \geq 1,$ and $1+|\text{min }\zeta| \geq 1.$ Thus, using \eqref{zeta bigger than 1}, we have that
    $$\int_\mathcal{K}\zeta \; d\nu_n =\int_\mathcal{K} \psi \; d\nu_n -\int_\mathcal{K} (1+|\text{min } \zeta|) \; d\nu_n\xrightarrow{n\rightarrow\infty} \int_\mathcal{K}\psi \; d\nu-\int_\mathcal{K} (1+|\text{min } \zeta|) \; d\nu = \int_\mathcal{K} \zeta d\nu,$$ as required.
    \end{proof}
    We are now ready to prove our first main result:
   \mainlemma* 
   \begin{proof} 
   Let $D \coloneqq 2^d$. For each $n \in \N,$ let $\epsilon_n \coloneqq \frac{1}{(1-\frac{1}{d}) \log \log(n+100)} \in (0,2).$ Introduce the sequences $(p_n)_{n \in \N}$ and $(c_n)_{n \in \N}$ so that $p_0 \coloneqq 0,$ $c_1 \coloneqq 3$ and for each $n \in \N$
   \begin{equation*}
       p_n = p_{n-1} +1+\bigg\lfloor\frac{1}{d}\log_2(kn^{1+\epsilon_n}) \bigg\rfloor \qquad \text{and} \qquad c_n = \min\{3\lfloor n^{\frac{\epsilon_n}{Dd}}\rfloor,D(c_{n-1}-2)^{D+1}+2\},
   \end{equation*} where $k \coloneqq 3^{d(D+1)}\sum_{n=1}^\infty{n^{-(1+\epsilon_n(1-\frac{1}{d}))}}.$ It can be verified that $k$ is well-defined using the Cauchy condensation test along with for each $n \geq 16$ the inequality$$\frac{n}{\log\log(2^n+100)}\geq \frac{n}{\log\log(2^{n+1})}= \frac{n}{\log((n+1)\log2)}\geq \frac{n}{\log(n+1)}\geq \log(n^2),$$ and since the first term of the sum is $1,$ we see that $k \geq 3^{d(D+1)} \geq 9^d.$
    Let $(\phi_j^{(n)})_{j \in [c_n],n \in \N}$ be the sequence of palettes from Theorem \ref{intermediate thm} associated with $(p_n)_{n \in \N}, (c_n)_{n \in \N}$ and $\rho$.
    Let $\Psi:\Z^d \to [2]$ be the function from the conclusion of Theorem \ref{main construction} associated with $(\phi_j^{(n)})_{j \in [c_n],n \in \N}$. 
     Let $V_1 \coloneqq \{\frac{1}{2}\}^d$ and $V_2 \coloneqq \set{\frac{1}{4},\frac{3}{4}}\times\set{\frac{1}{2}}^{d-1}.$ Finally, let $X \coloneqq \bigcup_{z\in\Z^{d}}z+V_{\Psi(z)}$.
     
We claim the sequence of palettes, the function $\Psi$ and the set $X$ are well-defined since $(c_n)_{n \in \N}$ and $(p_n)_{n \in \N}$ satisfy the conditions (\ref{constraint cn pn}) and (\ref{cn tn pn}) respectively. Indeed, for each $n \in \N$, since $k \geq 9^d,$ we see that $p_n-p_{n-1} \geq 2.$ Next, we have for each $n \in \N$ that
\begin{equation*}
    2^{p_{n}-p_{n-1}-1} \geq 2^{\frac{1}{d}\log_2(kn^{1+\epsilon_n})-1} = \frac{1}{2}k^{\frac{1}{d}}n^{\frac{1+\epsilon_n}{d}} \geq 3^Dn^{\frac{\epsilon_n}{d}}\geq c_n^D,
\end{equation*} where the penultimate inequality holds since $k \geq 3^{d(D+1)}.$ By definition, $c_n \leq D(c_{n-1}-2)^{D+1}+2.$ Finally,
\begin{equation*}
    \sum_{n=1}^\infty c_n^D2^{-d(p_{n}-p_{n-1}-1)} \leq \sum_{n=1}^\infty 3^Dn^{\frac{\epsilon_n}{d}}2^{-d(\frac{1}{d}\log_2(kn^{1+\epsilon_n})-1)} =\frac{D3^D}{k}\sum_{n=1}^\infty n^{-1-\epsilon_n+\frac{\epsilon_n}{d}}=\frac{2^d3^D}{3^{d(D+1)}}\leq\frac{1}{3}.
\end{equation*}Observe for each $n\in \N$ that $c_n\geq 3$. Indeed, $c_1 \coloneqq 3$, and assuming $c_{n-1}\geq 3$, we have that either $c_n = 3\lfloor n^\frac{\epsilon_n}{Dd}\rfloor \geq 3,$ or $c_n = D(c_{n-1}-2)^{D+1}+2,$ in which case we see
   \begin{equation*}
         \frac{D(c_{n-1}-2)^{D+1}}{c_{n-1}-2} = D(c_{n-1}-2)^D \geq {D} >1,
     \end{equation*} meaning $c_n > c_{n-1} \geq 3.$ As a corollary, noting that $n^{\epsilon_n} \to \infty$ as $n \to \infty,$ we have that $c_n$ is unbounded. Thus, the conditions of \eqref{constraint cn pn} and \eqref{cn tn pn} are satisfied, and so $\Psi$ is well-defined. By point \eqref{eq: repetitivity function} of Theorem \ref{main construction}, $\Psi$ is a repetitive mapping, and so by Lemma \ref{rep from mapping to net}, $X$ is a repetitive Delone set.
     
Now for each $n \in \N$, let $\varphi_n$ be a bijective affine map taking $[0,1]^d$ to a $d-$dimensional cube of sidelength $2^{p_{n-1}}$ containing a cubic set $S_n \subset \Z^d$ of sidelength $2^{p_{n-1}}$ that satisfies for each $x \in S_n$
    \begin{equation*}
        \Psi(x) = \phi_{c_n}^{(n)}(x-\text{bp}(S_n)).
    \end{equation*} Note these bijective affine maps exist by point (\ref{condition: every colour found somewhere}) of Theorem \ref{main construction}. Let $\mu_n$ be the measure defined on $[0,1]^d$ so that 
    for each $A \subseteq [0,1]^d$
    \begin{equation*}
        \mu_n(A) \coloneqq \frac{1}{2^{dp_{n-1}}}|\varphi_n(A) \cap X|.
    \end{equation*}
        Now we use Proposition \ref{5.5 extended} with $\mathcal{K} = [0,1]^d,$ $\nu_n = \mu_n,$ $\nu = \rho\mathcal{L}$, and $\mathcal{T}_n =2^{-p_{n-1}}\mathcal{U}_n,$ where we recall from the statement of Theorem \ref{intermediate thm} that
        \begin{equation*}
            \mathcal{U}_n \coloneqq \bigg\{\prod_{i=1}^{d}[(m_{i}-1)2^{p_{n-2}},m_{i}2^{p_{n-2}}):(m_{1},\ldots,m_{d})\in [2^{p_{n-1}-p_{n-2}}]^d\bigg\},
        \end{equation*} and $\widetilde{\mathcal{U}_n} \subset \mathcal{U}_n$ with $|\widetilde{\mathcal{U}_n}| = 2^{d(p_{n-1}-p_{n-2})}-Dc_{n-1}^D.$ We set $\widetilde{\mathcal{T}_n} = 2^{-p_{n-1}}\widetilde{\mathcal{U}_n} \subset \mathcal{T}_n$. We must verify that the conditions of Proposition \ref{5.5 extended} hold.  For each $n \geq 2,$ since $c_{n-1}^D \leq 2^{p_{n-1}-p_{n-2}-1}$
        \begin{equation*}
            \mathcal{L}\left(\mathcal{K}\setminus\bigcup_{T \in \widetilde{T}_n} T\right)  = \frac{Dc_{n-1}^D}{2^{d(p_{n-1}-p_{n-2})}} \leq \frac{D2^{p_{n-1}-p_{n-2}-1}}{2^{d(p_{n-1}-p_{n-2})}} = \frac{1}{2^{(d-1)(p_{n-1}-p_{n-2}-1)}} \xrightarrow{n\rightarrow\infty} 0.
        \end{equation*}
     Since $\rho(x) \leq 2$ for all $x \in \mathcal{K}$
\begin{equation*}\nu\left(\mathcal{K}\setminus\bigcup_{T \in \widetilde{T}_n} T\right) = \int_{\mathcal{K}\setminus\bigcup_{T \in \widetilde{T}_n} T} \rho \; d\mathcal{L} \leq 2\mathcal{L}\left(\mathcal{K}\setminus\bigcup_{T \in \widetilde{T}_n} T\right) \xrightarrow{n\rightarrow\infty} 0.
    \end{equation*} Moreover, since $X$ contains at most two points in each unit volume cube in $\R^d$ with integer coordinates
    \begin{equation*} \nu_n\left(\mathcal{K}\setminus\bigcup_{T \in \widetilde{T}_n} T\right) = \frac{1}{2^{dp_{n-1}}}\Bigg|\varphi_n\left(\mathcal{K}\setminus\bigcup_{T \in \widetilde{T}_n} T\right) \cap X\Bigg| \leq \frac{1}{2^{dp_{n-1}}}2\mathcal{L}\left(\varphi_n\left(\mathcal{K}\setminus\bigcup_{T \in \widetilde{T}_n} T\right)\right) = 2\mathcal{L}\left(\mathcal{K}\setminus\bigcup_{T \in \widetilde{T}_n} T\right) \xrightarrow{n\rightarrow\infty} 0.
    \end{equation*}The first two conditions of (\ref{prop conditions}) are clear. Thus, it suffices to show that
        \begin{equation*}
            \max_{T \in \widetilde{\mathcal{T}_n}}|\mu_n(T) - \rho \mathcal{L}(T)| \in o\left(\frac{1}{|\mathcal{T}_n|}\right).
        \end{equation*}
       Note that $|\mathcal{T}_n| = |\mathcal{U}_n| = 2^{d(p_{n-1}-p_{n-2})}.$ For each $T = 2^{-p_{n-1}}U\in \widetilde{\mathcal{T}_n}$
        \begin{equation*}
            \begin{split}
                \mu_n(T) = \frac{1}{2^{dp_{n-1}}}|\varphi_n(T) \cap X|= \frac{1}{2^{dp_{n-1}}}\sum_{x \in \varphi_n(T) \cap \Z^d}\Psi(x)&=\frac{1}{2^{dp_{n-1}}}\sum_{x \in \varphi_n(T) \cap \Z^d}\phi_{c_n}^{(n)}(x-\text{bp}(S_n)) \\ &=\frac{1}{2^{dp_{n-1}}}\sum_{x \in U \cap \Z^d}\phi_{c_n}^{(n)}(x).
            \end{split}
        \end{equation*}
        Hence, from \eqref{rho bk}, we get
        \begin{equation*}
            \abs{\mu_{n}(T)-\rho\leb(T)}\leq \frac{1}{\br{c_{n-1}-2}2^{d(p_{n-1}-p_{n-2})}}=\frac{1}{(c_{n-1}-2)\abs{\mc{T}_{n}}}.
        \end{equation*}
Therefore, $\mu_n \rightharpoonup \rho\mathcal{L},$ and so $X$ encodes $\rho.$

Let $\widetilde{R}(r)$ stand for the repetitivity function of $\Psi$ from Theorem~\ref{main construction}\eqref{eq: repetitivity function}. By Lemma~\ref{rep from mapping to net}, we have that $X$ has repetitivity function belonging to $O(\widetilde{R}(r))$. To complete the proof, we now show that $\widetilde{R}(r)\in \bigcap_{q>2/d}O(r(\frac{\log r}{\log \log r})^q)$. Fix $q>2/d$. Let $N\geq2$ be large enough so that $p_{N-2}>3$ and $\frac{2(1+\epsilon_{n-1})}{d}<q$ for all $n\geq N$. Now suppose $r>2^{p_{N-2}-1}$ and let $n\geq N$ be the unique integer satisfying $2^{p_{n-2}}<2r\leq 2^{p_{n-1}}$. Notice that 
    \begin{equation}\label{eqn: pnpn2}
        \begin{split}
        2^{p_n-p_{n-2}}= 2^{2+(p_{n}-p_{n-1}-1)+(p_{n-1}-p_{n-2}-1)} \leq4\;2^{\frac{1}{d}(\log_2(kn^{1+\epsilon_n})+\log_2(k(n-1)^{1+\epsilon_{n-1}}}))\\
        \leq 4k^\frac{2}{d}n^\frac{2(1+\epsilon_{n-1})}{d}\leq 4k^{\frac{2}{d}}n^{q}.
        \end{split}
    \end{equation}
   Since $\sum_{j=1}^{n-2} \log_2(j) \geq \int_1^{n-2}\log_2(x)\; dx$ and $\log_2(k) \geq \log_2(9^d) \geq \log_2(64) = 6$, we have
    \begin{equation*}
    \begin{split}
        p_{n-2}&= n-2+ \sum_{j=1}^{n-2}\bigg\lfloor\frac{1}{d}\log_2(kj^{1+\epsilon_j})\bigg\rfloor \geq\frac{1}{d} \sum_{j=1}^{n-2}\log_2(kj)\geq\frac{1}{d}\br{(\log_2k)(n-2)+\sum_{j=1}^{n-2}\log_2(j)}\\
        &\geq \frac{1}{d}\br{6(n-2) + \frac{(n-2)(\log(n-2)-1)+1}{\log 2}} \geq \frac{(n-2)(5+\log(n-2))+1}{d}\geq \frac{n\log_2 n +1}{d}.
        \end{split}
    \end{equation*}
    Therefore,
    \begin{equation*}
        n^n=2^{n \log_2 n} \leq 2^{dp_{n-2}-1}<2^{d-1}r^{d},
    \end{equation*}
     so using that $\frac{\log x}{\log(x\log x)}\geq \frac{1}{2}$ for all $x>1$ and that the function $\frac{\log x}{\log\log x}$ is increasing for all $x \geq x_0$ for a suitably large $x_0>e$, we get
    \begin{equation}\label{eqn: pn n}
        \frac{1}{2}n \leq  \frac{\log n^n}{\log\log n^n} < \frac{\log(2^{d-1}r^{d})}{\log\log(2^{d-1}r^{d})}\leq \frac{2d\log r}{\log\log r}.
    \end{equation}
    Then by using (\ref{eqn: pnpn2}) and (\ref{eqn: pn n})
    \begin{equation*}
            \frac{\widetilde{R}(r)}{2r} \leq \sqrt{d}\; 2^{p_{n}-p_{n-2}}\leq \sqrt{d}\; 4K^\frac{2}{d}(4d)^{q}\left(\frac{\log r}{\log \log r}\right)^{q}.
    \end{equation*}
    Hence, $\widetilde{R}(r)\in O\br{r\br{\frac{\log r}{\log\log r}}^{q}}$. 
    \end{proof} 
To end this section, we will give a formal proof for Theorem \ref{prop:non-rectifiable non-realisable}. As a reminder to the reader, this result is combined with Theorem \ref{delone}, together with the non-realisable densities given by Burago and Kleiner~\cite{burago1998separated}, to deliver our main result, Theorem \ref{1}. This gives the first examples of repetitive, non-rectifiable Delone sets with explicit, and moreover close to optimal, bounds on the repetitivity function.

 \realisable*
    \begin{proof} 
        This statement is attributed to Burago and Kleiner \cite[Lemma $2.1$]{burago1998separated}, with a complete proof contained inside the proof of Lemma~3.4 in \cite{dymond2023highly}, where the following more general statement is shown: If $X$ encodes $\rho$ which does not admit any bi-$\omega$ continuous solution $f\colon [0,1]^{d}\to \R^{d}$ of the equation $f\sharp \rho\leb=\leb|_{f([0,1]^{d})}$, then there is no homogeneous bi-$\omega$ bijection $X\to \Z^{d}$. The reader of the present paper does not need to know the meaning of the notions involving $\omega$, only that $\omega$ stands for a modulus of continuity and that when we take $\omega(t)=t$ the above statement becomes precisely the statement of the present lemma. Below we include a proof of Theorem~\ref{prop:non-rectifiable non-realisable} in succinct form, omitting one delicate part of the argument, for which we refer the reader to \cite[Proof of Lemma~3.4]{dymond2023highly}.
        
        For each $n \in \N$, since $X$ encodes $\rho$, define $r_n,$ $Q_n,$ $\varphi_n$ and $\mu_n$ congruous with Definition \ref{defn: X encoding rho}.
    Suppose that $X$ is rectifiable for a contradiction, that is, there exist $L\geq 1$ and an $L$-bi-Lipschitz bijection $f:X \rightarrow \Z^d.$ The proof will be structured as follows. We will show there exists a bi-Lipschitz mapping $g:[0,1]^d \to \R^d$ such that 
        $g \# (\rho \mathcal{L}) = \mathcal{L}|_{g([0,1]^d)}.$
    Then by Rademacher's theorem, $\text{Jac}(g)$ exists $a.e.$ on $[0,1]^d,$ and thus by changing variables, we must have that $\rho = \text{Jac}(g) \; a.e.,$ which is a contradiction to the non-realisability of $\rho.$  
    For each $n \in \N$, let $\star_n$ be an arbitrary point in $\varphi_n^{-1}(X) \cap [0,1]^d,$ and define the sequence of bijections $g_n:\varphi_n^{-1}(X) \to  \frac{1}{r_n}\Z^d$ to be
    \begin{equation*}
        g_n(x) \coloneqq \frac{1}{r_n} (f\circ \varphi_n(x)- f\circ \varphi_n(\star_n)).
    \end{equation*}
    Notice that each $g_n$ is $L$-bi-Lipschitz. Indeed, for $x,y \in \varphi_{n}^{-1}(X)$
    \begin{equation*}
        \begin{split}
            ||g_n(x)-g_n(y)||_2 = \frac{1}{r_n}||f \circ \varphi_n(x) - f \circ \varphi_n (y)||_2
             \leq \frac{L}{r_n}\text{Lip}(\varphi_n)||x-y||_2 = L||x-y||_2,
        \end{split}
    \end{equation*} with a similar calculation for $g^{-1}_n.$ By the Kirszbraun extension theorem \cite{kirszbraun1934zusammenziehende}, for each $n \in \N,$ $g_n|_{[0,1]^d}$ can be extended to an $L-$Lipschitz mapping $\overline{g_n}:[0,1]^d \to \R^d.$ Moreover, $(\overline{g_n})_{n \in \N}$ is bounded. Indeed, given $n \in \N$ and $x \in [0,1]^d,$ there exists a point $y \in \varphi_n^{-1}(X)$ such that $||x-y||_2 \leq S,$ where $S = S(X)$ is the constant of relative density of $X$. Therefore, 
    \begin{equation*}
    \begin{split}
        ||\overline{g_n}(x)||_2 &\leq ||\overline{g_n}(x)-\overline{g_n}(y)||_2 + ||\overline{g_n}(y)||_2 \leq L||x-y||_2 + \frac{1}{r_n}||f\circ \varphi_n(y) - f\circ\varphi_n(\star_n)||_2\\
        &\leq LS + \frac{L}{r_n}\text{Lip}(\varphi_n)||y-\star_n||_2 = LS+L||y-\star_n||_2 \leq LS + L\sqrt{d}. 
        \end{split}
    \end{equation*}
    Thus, by the Arzelà-Ascoli theorem, the sequence uniformly subconverges to an $L$-Lipschitz mapping $g:[0,1]^d \to \R^d$. Using that each $\overline{g_{n}}$ is actually $L$-bilipschitz on $\varphi_{n}^{-1}(X)$, a $\frac{\sqrt{d}}{r_{n}}$-net of $[0,1]^{d}$, it is straightforward to show that their uniform limit (along a subsequence) $g\colon [0,1]^{d}\to \R^{d}$ is $L$-bilipschitz. Relabel $\overline{g_n}$ as necessary to include only elements from a subsequence that uniformly converges to $g$. By the assumption that $X$ encodes $\rho$, we know that $\mu_n\rightharpoonup \rho\mathcal{L}$. Then by \cite[Lemma~5.6]{dymond2018mapping}, since $\overline{g_n}$ uniformly converges to $g,$ we get that ${\overline{g_n}} \# \mu_n \rightharpoonup g \# (\rho\mathcal{L})$. Separately, by the delicate argument of \cite[Proof of Lemma~3.4]{dymond2018mapping} read with $\omega(t)=t$, we have that ${\overline{g_n}} \# \mu_n \rightharpoonup \mathcal{L}|_{g([0,1]^d)}.$ Thus, by the uniqueness of weak limits, we get that $g\#(\rho\mathcal{L}) = \mathcal{L}|_{g([0,1]^d)}$. 
    \end{proof}

    \section{Optimality of the repetitivity function}
   The repetitive Delone sets constructed in the present paper are given by Theorem~\ref{main construction}, corresponding to sequences $(p_{n})_{n\in\N}$ and $(c_{n})_{n\in\N}$ of positive integers and a sequence $(\phi_{j}\upp{n})_{n\in\N,\,j\in[c_{n}]}$ of good palettes. In order to encode a given density $\rho$ into a repetitive Delone set, we apply Theorem~\ref{main construction} to a special sequence of good palettes, tailored to $\rho$, given by Theorem~\ref{intermediate thm}. A condition for this tailoring in Theorem~\ref{intermediate thm}, is that $\sum_{n=1}^\infty 2^{-d(p_n-p_{n-1})}<\infty$. The sequence $p_{n}$ determines the repetitivity function of the Delone sets we construct: the construction ensures, roughly speaking, that every pattern of scale $2^{p_{n-1}}$ is found inside every cube of sidelength $2^{p_{n}}$. Thus, the condition $\sum_{n=1}^\infty 2^{-d(p_n-p_{n-1})}<\infty$ provides the main constraint on the repetitivity function we may achieve. 
    
    In the present section, we show that the condition $\sum_{n=1}^\infty 2^{-d(p_n-p_{n-1})}<\infty$ cannot be relaxed, if we are to construct repetitive Delone sets encoding a given density $\rho$, verifying Theorem~\ref{delone}.
     \begin{define}
        Let $X$ be a Delone set, $x \in X$ and $r>0.$ An \emph{($r-$)cubic patch} centred at $x$, denoted as $\mathcal{P}^{(Q)}_{x,r}$, is 
    \begin{equation*}
        \mathcal{P}^{(Q)}_{x,r} \coloneqq X \cap Q(x,r)
    \end{equation*} where $Q(x,r)$ is the open cube centred at $x$ with sidelength $r$ and sides parallel to the axes. An \emph{$X-$translate} of $\mathcal{P}^{(Q)}_{x,r}$ is an $r-$cubic patch $\mathcal{P}^{(Q)}_{y,r}$ with $y \in X$ and $$\mathcal{P}^{(Q)}_{y,r} = y-x+\mathcal{P}^{(Q)}_{x,r}.$$ Occasionally, when $r$ is specified, we abbreviate the notation $\mathcal{P}^{(Q)}_{x,r}$ to $\mathcal{P}^{(Q)}_{x}$.
    \end{define}
    \begin{thm} \label{thm: pn rep}
    Let $d \in \N$, $L \geq l>0$, $\rho:[0,1]^d \to \R_{>0}$ be a density, and let $X$ be a repetitive Delone set in $\R^d$ with covering radius $L$ and packing radius $l$ that encodes $\rho$. Let $p_0 \coloneqq \log_2(4L(\frac{16dL}{l})^d)$ and define the sequence $(p_n)_{n \in \N}$ by
     \begin{equation} \label{eq: defining sequence pn}
     \begin{split}
          p_n \coloneqq \min& \{m \in \N: \\ &\emph{each open cube of sidelength $2^m$ in $\R^d$ contains an $X-$translate of every  $2^{p_{n-1}}-$cubic patch of $X$}\}.
     \end{split}
 \end{equation}
   If $\sum_{n=1}^\infty 2^{-d(p_n-p_{n-1})} = \infty,$ then $\rho$ is constant almost everywhere.
\end{thm}
\begin{remark}
    Note the sequence $(p_n)_{n \in \N}$ in Theorem~\ref{thm: pn rep} is subtly different to the sequence $(p_n)_{n \in \N}$ from Theorems~\ref{main construction} and \ref{intermediate thm}. The first is determined from a given Delone set, and the second is chosen before constructing a Delone set. However, both sequences represent the same concept, namely, that every cube of sidelength $2^{p_n}$ contains an $X-$translate of every $2^{p_{n-1}}-$cubic patch of $X.$    
 \end{remark}

Before we prove Theorem \ref{thm: pn rep}, we require three lemmas. 

    \begin{lemma} \label{claim: K}
       Let $d \in \N,$ $l>0$ and $X \subset \R^d$ be a Delone set with packing radius $l$. Then every closed cuboid $Q \subset \R^d$ with sidelengths all at least $l$ contains at most $K\leb(Q)$ points of $X,$ where $K \coloneqq (\frac{2\sqrt{d}}{l})^d.$
    \end{lemma}
    \begin{proof}
          Every closed cube of sidelength $\frac{l}{\sqrt{d}}$ in $\R^d$ is contained within an open ball of radius $l,$ so it contains at most one point of $X.$ Now let $Q$ be a closed cuboid in $\R^d$ of sidelengths $s_1,s_2,...,s_d$ all at least $l$. Then it can be covered with $\prod_{i=1}^d\left\lceil \frac{s_i\sqrt{d}}{l}\right\rceil$ closed cubes of sidelength $\frac{l}{\sqrt{d}}$, each containing at most one point of $X.$ Therefore, since $s_i \geq l$ for each $i \in [d]$, the total number of points in $Q \cap X$ is
         \begin{equation*}
            |Q \cap X| \leq \prod_{i=1}^d\bigg\lceil \frac{s_i\sqrt{d}}{l}\bigg\rceil \leq  \prod_{i=1}^d\left(\frac{2s_i\sqrt{d}}{l}\right) = \left(\frac{2\sqrt{d}}{l}\right)^{d} \prod_{i=1}^ds_i = \left(\frac{2\sqrt{d}}{l}\right)^d \leb(Q)= K\leb(Q).
         \end{equation*}
    \end{proof}
    As in \cite{lagarias2002local}, we will use the term \emph{ideal crystal} to describe a Delone set $X\subset \R^d$ of the form $X = \Lambda +F$ with $\Lambda$ a full-rank lattice and $F$ a finite set in $\R^d.$ The next lemma asserts that these very special Delone sets arise in Theorem~\ref{thm: pn rep} in the case where $(p_n)_{n \in \N}$ is eventually constant. It is here where the choice of $p_{0}$ in Theorem~\ref{thm: pn rep} matters. We require that the cubic patches considered in the definition of $p_n$ are sufficiently large so that we can use a fact about the patch counting function, \cite[Theorem $2.1$]{lagarias2002local}, to show that $X$ is an ideal crystal.
\begin{lemma} \label{lemma: pn constant X ideal}
    Let $d \in \N,$ $L \geq l>0$ and $X$ be a repetitive Delone set in $\R^d$ which has covering radius $L$ and packing radius $l$. Let $p_0 \coloneqq \log_2(4L(\frac{16dL}{l})^d)$ and define the sequence $(p_n)_{n \in \N}$ as in \eqref{eq: defining sequence pn}. If $(p_n)_{n \in \N}$ is eventually constant, then $X$ is an ideal crystal.
\end{lemma}
\begin{proof}The sequence $(p_n)_{n \in \N}$ is well-defined since $X$ is repetitive. Let $M \in \N$ be such that $(p_n)_{n \geq M}$ is constant.
Let $x, y \in X$ and label the $2^{p_M}-$cubic patches centred at $x$ and $y$ respectively as $\mathcal{P}^{(Q)}_x=\mathcal{P}^{(Q)}_{x,2^{p_M}}$ and $\mathcal{P}^{(Q)}_y=\mathcal{P}^{(Q)}_{y,2^{p_{M}}}$. Then by the definition of $2^{p_{M+1}} = 2^{p_M}$, the open cube $Q(x,2^{p_{M}})$ contains an $X-$translate of $\mathcal{P}^{(Q)}_y$, and the open cube $Q(y,2^{p_{M}})$ contains an $X-$translate of $\mathcal{P}^{(Q)}_x$. This means there exist $z,v \in X$ such that, using similar abbreviated notation $\mathcal{P}^{(Q)}_{z}$ and $\mathcal{P}^{(Q)}_{v}$ for the $2^{p_{M}}$-cubic patches centred at $z$ and $v$ respectively,
\begin{equation} \label{eq: z}
    \mathcal{P}_y^{(Q)} = y-z+ \mathcal{P}_z^{(Q)},\qquad \mathcal{P}_{x}^{(Q)}=x-v+\mathcal{P}_{v}^{(Q)},
\end{equation}
\begin{equation} \label{eq: patch in patch}
    z-y+\mathcal{P}_y^{(Q)} \subseteq Q(x,2^{p_M}) \cap X =\mathcal{P}_x^{(Q)} \qquad \text{and} \qquad v-x+\mathcal{P}_x^{(Q)} \subseteq Q(y,2^{p_M}) \cap X =\mathcal{P}_y^{(Q)}.
\end{equation}
Taking the cardinality of each side of the two set inclusions in \eqref{eq: patch in patch}, we obtain that 
    $|\mathcal{P}^{(Q)}_x| = |\mathcal{P}^{(Q)}_y|.$
Thus, the set inclusions in \eqref{eq: patch in patch} are in fact equalities. Combining \eqref{eq: z} with the first equality of \eqref{eq: patch in patch}, we get that $\mathcal{P}_{z}^{(Q)} = \mathcal{P}_{x}^{(Q)}.$ Thus, we have that $$X \cap (Q(z,2^{p_M})\setminus Q(x,2^{p_{M}})) = \varnothing.$$ Hence, $Q(z,2^{p_M})\setminus Q(x,2^{p_{M}})$, where $2^{p_M} \geq 2^{p_0}\geq 4L$, does not contain an open ball with radius $L$. This implies $||x-z||_\infty < 2L.$  

Since $y$ was chosen arbitrarily, we have shown that for each $y \in X,$ there exists some $z \in X \cap Q(x,4L)$ such that $\mathcal{P}_{z,2^{p_{M}}}^{(Q)}$ is an $X-$translate of $\mathcal{P}_{y,2^{p_{M}}}^{(Q)}.$ Therefore, there are at most $\abs{X\cap Q(x,4L)}$ $2^{p_{M}}$-cubic patches of $X$ that pairwise are not $X$-translates of each other. In order to apply a result of \cite{lagarias2003repetitive} for the patch counting function, we need to express this in terms of patches, in the sense of Definition~\ref{def: rep_function}, based on the Euclidean metric, rather than cubic patches. Each $2^{p_M-1}-$patch (as in Definition \ref{def: rep_function}) of $X$ is contained in a $2^{p_M}-$cubic patch, so there are at most $|X \cap Q(x,4L)|$ ${2^{p_M-1}}-$patches that pairwise are not $X-$translates. In other words, the patch-counting function $N_X$ (see \cite[Definition $2.3$]{lagarias2002local}) evaluated at $2^{p_{M}-1}$ gives at most $|X \cap Q(x,4L)|$. By Lemma \ref{claim: K} along with the fact that $(p_n)_{n \in \N}$ is non-decreasing, we have that
$$N_X(2^{p_{M}-1}) \leq |X \cap Q(x,4L)| \leq \left(\frac{2\sqrt{d}}{l}\right)^d(4L)^d < \left(\frac{16dL}{l}\right)^d=\frac{2^{p_0-1}}{2L} \leq \frac{2^{p_M-1}}{2L}.$$
Applying \cite[Theorem $2.1$]{lagarias2002local} with $T= {2^{p_M-1}},$ we conclude that $X$ is an ideal crystal. 
    
\end{proof}
\begin{lemma} \label{lemma: X ideal rho constant}
    Let $d \in \N,$ $X \subset \R^d$ be an ideal crystal and $\rho:[0,1]^d \to \R_{>0}$ be a density. If $X$ encodes $\rho,$ then $\rho$ is constant almost everywhere.
\end{lemma}
\begin{proof}
  Since $X$ encodes $\rho,$ there exist sequences $(r_n)_{n \in \N} \subset \R_{>0}$ going to $\infty$, closed cubes $(Q_n)_{n \in \N} \subset \R^d$ of sidelength $r_n,$ bijective affine mappings $\varphi_n:[0,1]^d \to Q_n$, and measures $(\mu_n)_{n \in \N}$ defined by 
    \begin{equation*}
         \mu_n(A) = \frac{1}{r_n^d}|X \cap \varphi_n(A)|, \qquad A \subseteq [0,1]^d
    \end{equation*} satisfying that $\mu_n \rightharpoonup \rho\mathcal{L}.$
    Let $\Lambda$ be the lattice with basis $(\lambda_1,...,\lambda_d)$ inducing $X,$ and let $G\subset \R^d$ be given by 
    $$G = \{t_1\lambda_1+...+t_d\lambda_d:t_i \in [0,1) \text{ for each } i \in [d]\}.$$
    It is quick to check that $G$ is a bounded set that satisfies $|G \cap X| >0,$ $\bigsqcup_{\lambda \in \Lambda} (G+\lambda) = \R^d,$ and each element of $G+\Lambda$ contains $|G\cap X|$ points of $X$. Let
     $c \coloneqq \frac{|G \cap X|}{\leb(G)}>0,$ and apply Proposition \ref{5.5 extended} with $\nu_n =\mu_n,$ $\nu = c\leb$, $\mathcal{T}_n \coloneqq\{\varphi_n^{-1}(G+\lambda)\cap[0,1]^d:\lambda \in \Lambda\}$, and $\widetilde{\mathcal{T}_n}\coloneqq  \mathcal{T}_n\cap\{\varphi_n^{-1}(G +\lambda):\lambda \in \Lambda\}.$ Note that $\widetilde{\mathcal{T}_n}$ is the collection of all sets $\varphi_{n}^{-1}(G+\lambda)$ with $\lambda\in\Lambda$ which are contained in $[0,1]^{d}$. Let us check the four conditions of \eqref{prop conditions} in Proposition~\ref{5.5 extended}. The first condition follows from the fact that $\bigsqcup_{\lambda \in \Lambda} (G+\lambda) = \R^d$ and that for each $n \in \N$, $\varphi_n^{-1}$ is an bijective affine map, so $\mathcal{T}_n$ is a partition of $[0,1]^d.$
     The third and fourth conditions are immediate since for each $n \in \N$ and $T \in {\mathcal{T}_n},$ writing $T = \varphi_n^{-1}(G+\lambda)\cap[0,1]^d$ for some $\lambda \in \Lambda,$ we see that $$\diam(T) \leq \diam(\varphi_n^{-1}(G+\lambda)) = \frac{1}{r_n}\diam(G) \xrightarrow{n \to \infty} 0$$ and
if further we have that $T \in \widetilde{T}_n,$ then     
     $$|\mu_n(T)-c\leb(T)| = \bigg|\frac{1}{r_n^d}|(G+\lambda) \cap X| -c\leb\left(\varphi_n^{-1}(G+\lambda)\right)\bigg|=\bigg|\frac{1}{r_n^d}|G \cap X| -c\frac{\leb(G)}{r_n^d}\bigg| =0.$$
     We must verify that the second condition
     \begin{equation} \label{eq: max condition}
         \lim_{n \to \infty} \max\Bigg\{\mu_n\left([0,1]^d\setminus\bigcup_{T \in \widetilde{T}_n} T\right),c\leb\left([0,1]^d\setminus\bigcup_{T \in \widetilde{T}_n} T\right)\Bigg\} = 0
     \end{equation}
      holds. We claim for each $n \in \N$ that \begin{equation} \label{eq: Hn}
          H_n \coloneqq 
           \left[\frac{2\diam(G)}{r_n},1-\frac{2\diam(G)}{r_n}\right]^{d} \subseteq \bigcup_{T \in \widetilde{\mathcal{T}_n}}T.
      \end{equation} Indeed, for each $n \in \N$ and $\lambda \in \Lambda,$ we have that
     $$\text{Dist}(\R^d \setminus[0,1]^d,H_n) \geq \frac{2\diam(G)}{r_n}>\frac{\diam(G)}{r_n}=\diam\left(\varphi_n^{-1}\left(G+\lambda\right)\right),$$ so each element of $\mathcal{T}_n\setminus \widetilde{\mathcal{T}_n}$ is a subset of $[0,1]^d \setminus H_n$. Hence, we infer that \eqref{eq: Hn} holds. Now notice that $\leb(H_n) = \left(1-\frac{4\diam(G)}{r_n}\right)^d \xrightarrow{n \to \infty} 1,$ and $\varphi_n([0,1]^d \setminus H_n)$ is equal to a pairwise disjoint, finite union of closed cuboids of sidelengths at least $2\diam(G) \geq l,$ where $l$ is the packing radius of $X$. Hence, by Lemma \ref{claim: K}, there exists a constant $K=K(X)>0$ such that $$\mu_n([0,1]^d \setminus H_n) = \frac{1}{r_n^d}|\varphi_n([0,1]^d \setminus H_n) \cap X| \leq \frac{1}{r_n^d}K\leb (\varphi_n([0,1]^d \setminus H_n)) = K\leb([0,1]^d \setminus H_n)\xrightarrow{n \to \infty} 0.$$ Therefore, using \eqref{eq: Hn}, we have that $[0,1]^d \setminus \bigcup_{T \in \widetilde{T}_n}T \subseteq [0,1]^d \setminus H_n,$ so we have shown that \eqref{eq: max condition} holds. By Proposition \ref{5.5 extended}, we infer that $\mu_n \rightharpoonup c\leb.$ By the uniqueness of weak limits, we obtain that $\rho\leb = c \leb,$ i.e., $\rho = c$ almost everywhere.
        
\end{proof}
\begin{proof}[Proof of Theorem \ref{thm: pn rep}]
    The sequence $(p_n)_{n \in \N}$ is well-defined since $X$ is repetitive. 
    Since $X$ encodes $\rho,$ there exist sequences $(r_n)_{n \in \N} \subset \R_{>0}$ going to $\infty$, closed cubes $(Q_n)_{n \in \N} \subset \R^d$ of sidelength $r_n,$ bijective affine mappings $\varphi_n:[0,1]^d \to Q_n$, and measures $(\mu_n)_{n \in \N}$ defined by 
    \begin{equation*}
         \mu_n(A) = \frac{1}{r_n^d}|X \cap \varphi_n(A)|, \qquad A \subseteq [0,1]^d
    \end{equation*} satisfying that $\mu_n \rightharpoonup \rho\mathcal{L}.$
    
    Notice that $(p_n)_{n \in \N}$ is a non-decreasing sequence of natural numbers. Moreover, the definition of $(p_{n})_{n\in\N}$ implies that either $(p_n)_{n\in\N}$ is eventually constant, or $(p_n)_{n\in\N}$ is strictly increasing. If $(p_n)_{n \in \N}$ is eventually constant, then by Lemma~\ref{lemma: pn constant X ideal}, $X$ is an ideal crystal, and so by Lemma \ref{lemma: X ideal rho constant}, $\rho$ is constant almost everywhere.
 Therefore, we may assume that $(p_{n})_{n\in\N}$ is strictly increasing (and tends to $\infty$). In this case, notice that for each $n \in \N$
 \begin{equation} \label{eq: p0}
     2^{p_{n-1}}\geq 2^{p_0} \geq 4L.
 \end{equation}
 For the remainder of this proof, we will consider \emph{half-open cubes}, by which we mean sets of the form $$\prod_{i=1}^d(x_i,x_i+s], \qquad x_1,...,x_d \in \R, \;s>0.$$ The standard partition of a large half-open cube into smaller half-open cubes has the property that the union of all elements in the partition cover the large cube, while elements of the partition are pairwise disjoint.  
  
  Define the sequences of the \emph{lightest} and \emph{darkest shades} of $X$, $(a_n)_{n\in \N}$ and $(b_n)_{n\in \N}$ respectively, to be
    \begin{equation*}
       a_n \coloneqq \frac{1}{2^{dp_{n-1}}}\inf_{S \in \mathcal{S}_n}|X \cap S| \qquad \text{and} \qquad b_n \coloneqq \frac{1}{2^{dp_{n-1}}}\sup_{S \in \mathcal{S}_n}|X \cap S|,
    \end{equation*} where $\mathcal{S}_n$ is the collection of half-open cubes in $\R^d$ of sidelength $2^{p_{n-1}}.$ 
    The numbers $a_n$ and $b_n$ are well-defined since for each $n\in \N$ and each cube $S \in \mathcal{S}_n$, we have that $2^{p_{n-1}}\geq 4L \geq l$ by \eqref{eq: p0} and so $$0 \leq |X\cap S|\leq K\leb(S) = K2^{dp_{n-1}},$$ where $K$ is the constant from Lemma \ref{claim: K}. Therefore, for each $n\in \N$ we have that $a_n \geq 0$ and $b_n \leq K$.
    \begin{claim} \label{claim: an bn}
        $(a_n)_{n\in \N}$ is non-decreasing and $(b_n)_{n\in \N}$ is non-increasing.
    \end{claim}
    \begin{proof}
        Fix $n\in \N$. Then considering the standard partition of any half-open cube of sidelength $2^{p_n}$ into $2^{d(p_n-p_{n-1})}$ half-open cubes of sidelength $2^{p_{n-1}}$
        \begin{equation*}
            a_{n+1} = \frac{1}{2^{dp_{n}}}\inf_{S \in \mathcal{S}_{n+1}} |X \cap S| \geq \frac{1}{2^{dp_{n}}}2^{d(p_{n}-p_{n-1})}\inf_{S \in \mathcal{S}_n} |X \cap S|= a_{n}.
        \end{equation*}  A similar calculation holds for $b_n.$
    \end{proof}
    Claim~\ref{claim: an bn} together with the fact that $(a_{n})_{n\in\N}$ and $(b_{n})_{n\in\N}$ are bounded implies that $\lim_{n \to \infty} a_n$ and $\lim_{n \to \infty} b_n$ exist. In addition, it is clear by definition that $a_n \leq b_n$ for each $n.$ 
    Now fix $\epsilon >0$ and let $N = N(X,\epsilon) \in \N$ to be determined later. Fix $n \geq N.$ By the definition of $b_{n}$, there exists a half-open cube $E=E_{n,\epsilon} \in \mathcal{S}_n$ such that 
    \begin{equation} \label{eqn: Qeps}
        \frac{1}{2^{dp_{n-1}}}|X \cap E| \geq b_n - \frac{\epsilon}{2}.
    \end{equation}
    Since $X$ has covering radius $L,$ we have that every ball in $\R^d$ of radius $L$ contains at least one point of $X.$ Thus, the cube of sidelength $2L$ centred at the centre of $E$ contains a point $x\in X,$ and is contained in $E$ by \eqref{eq: p0}.
    Consider the open cube $P = P_{n,x}$ of sidelength $2^{p_{n-1}}$ centred at $x.$ To estimate the difference between the number of points in $X \cap E$ and $X \cap P$, observe that $P\cap E$ contains an open cube $\widetilde{E}$ of sidelength $2^{p_{n-1}}-L$ and with a common vertex with $E$. Thus $E\setminus P$ is covered by $E\setminus \widetilde{E}$.
    Notice too that $E \setminus \widetilde{E}$ is the finite union of cuboids with sidelengths all at least $L.$ Note $L \geq l$, so with $K >0$ as in Lemma \ref{claim: K} 
    \begin{equation} \label{eq:shifted square}
    \begin{split}
        \frac{1}{2^{dp_{n-1}}}|X \cap E| - \frac{1}{2^{dp_{n-1}}}|X \cap P| &\leq \frac{1}{2^{dp_{n-1}}}|X \cap (E\setminus P)| \leq \frac{1}{2^{dp_{n-1}}}|X \cap (E\setminus \widetilde{E})| \leq \frac{1}{2^{dp_{n-1}}}K\leb(E\setminus \widetilde{E})\\&\leq\frac{K}{2^{dp_{n-1}}}((2^{p_{n-1}})^d-(2^{p_{n-1}}-L)^d) \leq K\left(1-\left(1-\frac{L}{2^{p_{N-1}}}\right)^d\right)<\frac{\epsilon}{2},
        \end{split}
    \end{equation} 
    where the last inequality is the first condition we impose on $N=N(X,\varepsilon)$.
    Therefore, by \eqref{eqn: Qeps} and \eqref{eq:shifted square}
    \begin{equation*}
        \frac{1}{2^{dp_{n-1}}}|X \cap P| = \frac{1}{2^{dp_{n-1}}}|X \cap E| + \left(\frac{1}{2^{dp_{n-1}}}|X \cap P| - \frac{1}{2^{dp_{n-1}}}|X \cap E|\right) \geq b_n-\frac{\epsilon}{2} - \frac{\epsilon}{2}
 \geq b_n - \epsilon.
    \end{equation*} 
    Now we will show $\lim_{n\to \infty}a_n = \lim_{n \to \infty} b_n$. Suppose for a contradiction that this assertion is false. Fix $n \geq N$ and consider an arbitrary half-open cube $S \in \mathcal{S}_{n+1}$ of sidelength $2^{p_n}$. By the definition of $p_n,$ $S$ contains an $X-$translate, which we will denote by $\mathcal{R}$, of the cubic patch $\mathcal{P}^{(Q)}_{x,2^{p_{n-1}}} = X \cap P.$ The cubic patch $\mathcal{R}$ is contained in a half-open cube of sidelength $2^{p_{n-1}}$, which we call $\mathcal{U}_1$. Consider the standard partition of $S$ into $2^{d(p_{n}-p_{n-1})}$ half-open cubes of sidelength $2^{p_{n-1}}$. The cubic patch $\mathcal{R}$ is contained within the union $\mc{U}$ of a collection of $2^d$ half-open cubes in this partition. We further consider the standard partition of $\mathcal{U}$ into $2^{d(p_{n-1}-p_{m}+1)}$ half-open cubes of sidelength $2^{p_m},$ where $m \coloneqq \lfloor\frac{n-1}{2}\rfloor.$ There are at most $(2^{p_{n-1}-p_{m}}+1)^d$ elements of this partition that intersect $\mathcal{U}_1,$ so there are at least $2^{d(p_{n-1}-p_m+1)}-(2^{p_{n-1}-p_{m}}+1)^d$ half-open cubes of this partition that do not intersect $\mathcal{U}_1$. Refer to the union of these half-open cubes not intersecting $\mathcal{U}_1$ as $\mathcal{U}_2$. Then 
    \begin{equation*} \label{eq: patch an+1}
    \begin{split}
        \frac{1}{2^{dp_n}} &|X \cap S|= \frac{1}{2^{dp_n}}(|X \cap (S \setminus \mathcal{U})|+|X \cap \mathcal{U}|) \geq \frac{1}{2^{dp_n}}(|X \cap (S \setminus \mathcal{U})|+|X \cap \mathcal{U}_1|+|X \cap \mathcal{U}_2|)
        \\&\geq \frac{1}{2^{dp_{n}}}((2^{d(p_{n}-p_{n-1})}-2^d)2^{dp_{n-1}}a_{n} + 2^{dp_{n-1}}(b_n - \epsilon) + (2^{d(p_{n-1}-p_m+1)}-(2^{p_{n-1}-p_{m}}+1)^d)a_{m+1}2^{dp_m}) \\
        &= a_n + 2^{d(p_{n-1}-p_{n})}(b_n-a_n -\epsilon + (2^d-1)(a_{m+1}-a_n) + (1 - (1+2^{p_{m}-p_{n-1}})^d)a_{m+1})\\
        &\geq a_n + 2^{d(p_{n-1}-p_{n})}(b_n-a_n -\epsilon-(2^{d}-1)(\lim_{i\to\infty}a_{i}-a_{\floor{\frac{N-1}{2}}})-K((1+2^{-\frac{N-1}{2}})^{d}-1))\\
        &\geq a_n + 2^{d(p_{n-1}-p_n)}(b_n-a_n-2\epsilon),
    \end{split}
    \end{equation*} 
    In the penultimate inequality, we use that $(p_{i})_{i\in\N}$ is strictly increasing, which implies $p_{n-1}-p_{m}\geq n-1-m\geq \frac{N-1}{2}$. We also use that $a_{j}\leq a_{j+1}\leq \lim_{i\to\infty}a_{i}\leq K$ for all $j\in\N$. The last inequality in the sequence above is equivalent to $$(2^{d}-1)\br{\lim_{i\to\infty}a_{i}-a_{\floor{\frac{N-1}{2}}}}-K((1+2^{-(N-1)/2})^{d}-1)\leq\epsilon,$$ which is the second and final condition we impose on $N = N(X,\epsilon).$
    Since $S$ is arbitrary, we have proved that $$a_{n+1} = \inf_{S \in \mathcal{S}_{n+1}} \frac{1}{2^{dp_n}}|X \cap S| \geq a_n + 2^{d(p_{n-1}-p_n)}(b_n-a_n-2\epsilon) \qquad \text{for each $n \geq N$}.$$
    To summarise, for any $\epsilon>0$ we have shown that there exists $N(X,\epsilon)\in\N$ such that $$a_{n+1}\geq a_{n}+2^{d(p_{n-1}-p_{n})}(b_{n}-a_{n}-2\epsilon)\qquad \text{for each $n\geq N(X,\epsilon)$.}$$
    Now take $\epsilon \coloneqq \frac{1}{3}\lim_{n \to \infty}(b_n-a_n)>0.$ Since $b_n-a_n$ is non-increasing
$$b_n-a_n-2\epsilon \geq \lim_{n \to \infty}(b_n-a_n) - 2\epsilon = 3\epsilon - 2\epsilon = \epsilon, \qquad \text{for each $n\geq N(X,\epsilon)$.}$$ Therefore, for each $n \geq N=N(X,\epsilon)$
    $$a_{n+1} \geq \epsilon\sum_{j=N}^n2^{d(p_{j-1}-p_j)},$$ and so 
    $\lim_{n \to \infty} a_{n} \geq \epsilon\sum_{j = N}^{\infty}2^{d(p_{j-1}-p_{j})} = \infty,$ which is a contradiction. Therefore, $\lim_{n \to \infty}a_n = \lim_{n \to \infty} b_n.$ 

    Finally, we must show that $\rho$ is constant almost everywhere. We will show that $\mu_n \rightharpoonup a\mathcal{L}|_{[0,1]^d},$ where $a \coloneqq \lim_{n \to \infty}a_n$, and so by the uniqueness of weak limits we will have that $\rho\mathcal{L} = a\mathcal{L}|_{[0,1]^d},$ which means $\rho = a$ almost everywhere. To this end, we will use the Proposition \ref{5.5 extended} with $\mathcal{K} = [0,1]^d$, $\nu = a\mathcal{L}|_{[0,1]^d}$, and $\nu_n = \mu_n$ for each $n \in \N.$ Choose $J \in \N$ so that $r_n \geq 1$ for each $n \geq J$. For each $n \geq J,$ let $m \in \N$ be such that $2^{p_{m-1}} \leq \sqrt{r_n} < 2^{p_m}$. Then take the finite collection $\mathcal{T}_n$ to be 
    \begin{equation*}
        \mathcal{T}_n \coloneqq \bigg\{[0,1]^d \cap\prod_{i=1}^d [x_i,x_i+s_n]: (x_1,...,x_d) \in s_n\Z^d \bigg\}, \qquad\text{with}
        \; s_n \coloneqq \frac{2^{p_{m-1}}}{r_n}.
    \end{equation*} Since $s_n \leq 1$ for each $n \geq J$, note for each $n \geq J$ that
    $$|\mathcal{T}_n| =|s_n\Z^d \cap [0,1]^d| = \left(\left\lfloor\frac{1}{s_n}\right\rfloor+1\right)^d\leq \left(\frac{2}{s_n}\right)^d.$$
    $\mathcal{T}_n$ $\nu-$covers $[0,1]^d$, and since $s_n \leq \frac{1}{\sqrt{r_n}} \xrightarrow{n\rightarrow\infty}0$, the diameter of each element of $\mathcal{T}_n$ goes to $0$ as $n \to \infty.$ To show the last condition of (\ref{prop conditions}), we will choose $\widetilde{\mathcal{T}_n}$ to be the finite collection of cubes in $\mathcal{T}_n$ of sidelength exactly $s_n.$ It is easy to check that $\widetilde{\mathcal{T}_n}$ obeys the conditions of Proposition \ref{5.5 extended} since $s_n \xrightarrow{n\rightarrow\infty}0$. For any cube $T \in \widetilde{\mathcal{T}_n},$ notice that 
    \begin{equation*}
        \begin{split}
            \mu_n(T) &= \frac{1}{r_n^d}|X \cap \varphi_n(T)| = \frac{s_n^d}{(r_ns_n)^d}|X \cap \varphi_n(T)|
            \leq \mathcal{L}(T)\frac{1}{2^{dp_{m-1}}} \sup_{S\in \mathcal{S}_m} |X \cap S| = \mathcal{L}(T)b_m \\&= a\mathcal{L}(T) + (b_m-a)\mathcal{L}(T) = a\mathcal{L}(T) + (b_m-a)s_n^d \leq a\mathcal{L}(T) + 2^d\frac{b_m-a}{|\mathcal{T}_n|}.
        \end{split}
    \end{equation*} A similar calculation gives that $\mu_n(T) \geq a\mathcal{L}(T) - 2^d\frac{a-a_m}{|\mathcal{T}_n|}.$ But notice that $m \xrightarrow{n\rightarrow\infty} \infty$, so both $b_m-a$ and $a-a_m$ are in $o(1),$ and so $|\mu_n(T)-a\mathcal{L}(T)| \in o(\frac{1}{|\mathcal{T}_n|}).$ Therefore, $\rho = a$ almost everywhere.
\end{proof}
\rlogr*
    \begin{proof}
        The theorem will be proved as follows. Define the sequence $(p_n)_{n \in \N \cup \{0\}}$ as in \eqref{eq: defining sequence pn}. We will show $\sum_{n = 1}^\infty 2^{-d(p_n-p_{n-1})} = \infty.$ Then by Theorem \ref{thm: pn rep}, $\rho$ is constant almost everywhere.
\begin{claim} \label{claim: Rpn} For all $n \in \N$
    $$R\left(\frac{\sqrt{d}}{2}2^{p_{n-1}}\right)+\sqrt{d}\;2^{p_{n-1}} \geq \frac{1}{4}2^{p_n}.$$
\end{claim}
\begin{proof}
    Fix $n \in \N$. By the definition of $p_n,$ there exists $x \in X$, a $2^{p_{n-1}}$-cubic patch $\mathcal{P}^{(Q)}_x$ centred at $x$, and an open cube $Q$ of sidelength $2^{p_{n}-1}$ in $\R^d$ such that $Q$ does not contain any $X-$translate of $\mathcal{P}^{(Q)}_x$. Consider the inscribed open ball $B$, of radius $2^{p_n-2},$ in the cube $Q.$ Notice $B$ does not contain an $X-$translate of the $2^{p_{n-1}}-$cubic patch $\mathcal{P}^{(Q)}_x$. Since open cubes of sidelength $2^{p_{n-1}}$ are inscribed in an open ball of sidelength $\frac{\sqrt{d}}{2}2^{p_{n-1}},$ we have that $B \cap X$ does not contain an $X-$translate of the $\frac{\sqrt{d}}{2}2^{p_{n-1}}-$patch $\mathcal{P}_x$ which is centred at $x.$ By the definition of the repetitivity function $R$, every open ball of radius $R(\frac{\sqrt{d}}{2}2^{p_{n-1}})+{\sqrt{d}}\;2^{p_{n-1}}$ in $\R^d$ must contain an $X-$translate of every $\frac{\sqrt{d}}{2}2^{p_{n-1}}-$patch, so  $$R\left(\frac{\sqrt{d}}{2}2^{p_{n-1}}\right)+\sqrt{d}\;2^{p_{n-1}} \geq \text{radius}(B)=2^{p_n - 2} = \frac{1}{4}2^{p_n}.$$
\end{proof}
Using Claim \ref{claim: Rpn} and that $R(r) \in O(r(\log r)^{\frac{1}{d}})$, there exists a constant $C_1>0$ such that for all $n \in \N$
\begin{equation*} \label{eqn: rep pn}
    \frac{1}{4}2^{p_n-p_{n-1}}\leq \frac{R(\frac{\sqrt{d}}{2}2^{p_{n-1}})}{2^{p_{n-1}}}+\sqrt{d} \leq C_1\frac{\sqrt{d}}{2} \left(\log \left(\frac{\sqrt{d}}{2}2^{p_{n-1}}\right)\right)^{\frac{1}{d}}+\sqrt{d}.
\end{equation*}
Therefore, there exists some $N \in \N$ such that, setting $C_2(d)\coloneqq 4C_1{\sqrt{d}}(2\log2)^\frac{1}{d}>0,$ for all $n\geq N$ 
\begin{equation} \label{eq: order bound}
    2^{p_{n}-p_{n-1}}\leq {8C_1\frac{\sqrt{d}}{2}}(\log2^{2p_{n-1}})^\frac{1}{d} = 4C_1\sqrt{d}(2\log2)^{\frac{1}{d}}{p_{n-1}}^\frac{1}{d} = C_2{p_{n-1}}^\frac{1}{d}.
\end{equation}
\begin{claim} \label{claim: 2^pn is O(n^n)}
    Let $N \in \N$ be as in \eqref{eq: order bound}. Then there exists a constant $C\geq 1$ so that for each $n \geq N$  $$2^{p_{n-1}} \leq (Cn)^n.$$
\end{claim}
\begin{proof}
    Take $C \coloneqq \max\{2^{p_{N-1}}, {C_2}^2\},$ where $C_2$ is the constant in \eqref{eq: order bound}. The claim is true for $n=N$ since $C\geq 2^{p_{N-1}}$. Assume for a fixed $n \geq N$ that $2^{p_{n-1}} \leq (Cn)^n$. Then using \eqref{eq: order bound} along with the facts that $d\geq 2$ and $\log_2 x \leq x$ for every $x \geq 1$
    \begin{equation*}
    \begin{split}
        2^{p_n}&=2^{p_{n}-p_{n-1}}2^{p_{n-1}} \leq C_2{p_{n-1}}^\frac{1}{d}(Cn)^n \leq C^\frac{1}{2}(\log_2((Cn)^n))^\frac{1}{d}(Cn)^n \\
        &= C^\frac{1}{2}(n\log_2(Cn))^\frac{1}{d}(Cn)^n \leq C^\frac{1}{2}(Cn^2)^\frac{1}{d}(Cn)^n \leq (Cn)^{n+1}\leq (C(n+1))^{n+1}.
        \end{split}
    \end{equation*}
\end{proof}
Combining the bound in \eqref{eq: order bound} with Claim \ref{claim: 2^pn is O(n^n)}, we get for each $n \geq N$ that 
$$2^{-d(p_n-p_{n-1})} \geq C_2^{-d}{p_{n-1}}^{-1} \geq C_2^{-d}(\log_2((Cn)^n))^{-1} = C_2^{-d}(n\log_2(Cn))^{-1}.$$ Therefore,
\begin{equation*}
    \sum_{n=1}^\infty 2^{-d(p_n-p_{n-1})} \geq \sum_{n=N}^\infty 2^{-d(p_n-p_{n-1})} \geq C_2^{-d}\sum_{n=N}^\infty\frac{1}{n\log_2(Cn)} = \infty.
\end{equation*} 
    \end{proof}
Notice that the class of functions in which $R(r)$ lies in Theorem \ref{thm: repetitivity implies constant rho} is smaller than the class of functions in which $R(r)$ lies in Theorem \ref{delone}. This is because of the discrete nature of how we calculate the repetitivity function based on estimating the radius $r$ of a given patch in our Delone set: the repetitivity function in our construction is of the form $\sqrt{d} \; 2^{p_{n}}$ whenever $2^{p_{n-2}}<2r \leq 2^{p_{n-1}},$ with the upper bound used in Theorem \ref{thm: repetitivity implies constant rho} and the lower bound used in Theorem \ref{delone}. It may be the case that there is a non-rectifiable $O(r(\log{r})^{\frac{1}{d}})-$repetitive Delone set in $\R^d$, but our method is only able to determine an exponent of roughly $\frac{2}{d}$, so a new idea may be needed to bridge the small remaining gap. 

\paragraph{Acknowledgements} The authors would like to thank Vojt\v ech Kalu\v za for helpful discussions. Moreover, they would like to thank the anonymous referee for careful reading and helpful corrections that improved the paper. AB thanks the University of Birmingham for financial support.
    \newpage

\begin{flushleft}
    Michael Dymond,\\
    School of Mathematics, University of Birmingham, Birmingham, B15 2TT, United Kingdom.\\
    \href{mailto:m.dymond@bham.ac.uk}{m.dymond@bham.ac.uk} 
\end{flushleft}

\begin{flushleft}
    Ashwin Bhat,\\
    School of Mathematics, University of Birmingham, Birmingham, B15 2TT, United Kingdom.\\
    \href{mailto:axb1805@student.bham.ac.uk}{axb1805@student.bham.ac.uk} 
\end{flushleft}

\end{document}